\theoremstyle{plain}
\newtheorem*{conjectuur*}{Conjecture}
\newtheorem{theorem}[subsection]{Theorem}
\newcommand\Thm[1]{Theorem~\ref{#1}}
\newtheorem{corollary}[subsection]{Corollary}
\newcommand\Cor[1]{Corollary~\ref{#1}}
\newtheorem{lemma}[subsection]{Lemma}
\newcommand\Lem[1]{Lemma~\ref{#1}}
\newtheorem{proposition}[subsection]{Proposition}
\newcommand\Prop[1]{Proposition~\ref{#1}}
\newtheorem*{maintheorem}{Main Theorem}
\theoremstyle{definition}
\newtheorem{definition}[subsection]{Definition}
\newtheorem{example}[subsection]{Example}
\newcommand\Examp[1]{Example~\ref{#1}}
\theoremstyle{remark}
\newtheorem{remark}[subsection]{Remark}
\newcommand\Rem[1]{Remark~\ref{#1}}
\newcommand{\emptyprop}{q}
\newcommand \acf{algebraically closed field}
\newcommand \after{\circ}
\newcommand\ch{characteristic}
\newcommand \CM{Coh\-en-Mac\-au\-lay}
\newcommand \DVR{discrete valuation ring}
\renewcommand \hom [3]{\operatorname{Hom}_{#1}(#2,#3)} 
\newcommand \homo{homomorphism}
\renewcommand\iff{if and only if}
\newcommand\into{\hookrightarrow}
\newcommand \inv[1]{{#1^{-1}}}
\newcommand \inverse[2]{{#1^{-1}(#2)}}
\newcommand \iso{\cong}
\newcommand \los{\L os' Theorem}
\newcommand \map[1]{{\newcommand{\tmpprop}{#1q}  \if\tmpprop\emptyprop \to\else \xrightarrow{{\phantom{i}{#1}\phantom{i}}}\fi}} 
\newcommand \maxim{\mathfrak m}
\newcommand \nat{\mathbb N}
\newcommand \op\operatorname
\newcommand \pol[2]{#1[#2]}
\newcommand \restrict [2]{\left.#1\right|_{{#2}}}
\newcommand \rij[2]{(#1_1,\dots,#1_{#2})}
\let\sub\subseteq
\newcommand\ul[1]{#1_\natural}
\newcommand\cp[1]{#1_\sharp}
\newcommand\asa{\leftrightarrow}
\newcommand\dan{\to}
\newcommand\niet{\neg}
 \theoremstyle{plain}
\newtheorem*{thm}{Theorem}
\title {Universal Categories}
\author{Hans Schoutens}
\date\today
\address{Department of Mathematics\\
NYC College of Technology\\
City University of New York\\
NY, NY 11201 (USA)}
\email{hschoutens@citytech.cuny.edu}
\begin{document}

\maketitle

\begin{abstract}
The category of models of any  theory $T$ in any first-order language $L$ has the surprising property that any small category that is elementarily equivalent with it, already embeds in it. The proof uses an abstract argument via ultrapowers, leaving one wonder which concrete categorical axioms, depending on $T$ and $L$, are responsible for this embedding result.

We also propose a first-order logic for which  equivalent categories are always elementarily equivalent.
\end{abstract}

\newcommand\formula[2]{\begin{description}
\item[$#1$ ] 
$#2$
\end{description}
}
\newcommand\fo{\mathbf{FO}}
\newcommand\qfo{\L o\'s}
\newcommand\cateo[1]{\textbf{cat}(#1)}
\newcommand\langcat{L_{\text{cat}}}
\newcommand\langfunc{\texttt{fct}(L)}

\newcommand\End[1]{\sym{End}(#1)}
\newcommand\sym[1]{{\tt #1}}
\newcommand\thcat{\mathbf{CAT}}
\newcommand\thfunc{\mathbf{Th}_{\text{func}}}
\newcommand\thcatpadd{\mathbf{Th}^+_{\text{cat}}}
\newcommand\thfuncpadd{\mathbf{Th}^+_{\text{func}}}
\newcommand\een{\sym{Id}}
\newcommand\mor[2]{\sym{Mor}(#1,#2)}
\newcommand\ic[1]{IsoCor(#1)}
\newcommand\lang{\mathcal L}
\newcommand\up[1]{\textbf{up}( #1)}
\newcommand \md[1]{\mathbb{M}\!\op{od}_\catset({#1})}
\newcommand \mds[1]{\mathbb{M}\!\op{od}^{\text{str}}_\catset({#1})}
\newcommand\cat[1]{\mathbbm {#1}}
\newcommand\set[1]{\left| #1\right |}

\newcommand\conc{$\catset$-concrete}
\newcommand\catset{\mathfrak U}

\section{Introduction}
The key notion of this paper is that of universality: let us call an $L$-structure   \emph{universal}, if   any elementarily equivalent $L$-structure of lesser cardinality embeds in it. Uncountably categorical structures are examples of universal structures, but beyond that, there seem to be no immediate examples. For instance, the real field is not universal, for their exist countable non-Archimedean real closed fields,
which therefore are elementary equivalent with the reals but can never be embedded in them. 

In this paper, we will encounter nonetheless an abundance of examples, actually, one for each first-order theory $T$, to wit, the category  of models of $T$, viewed as a first-order structure in the language of categories. Since categories make a distinction between objects and morphisms, a two-sorted language $\langcat$ is therefore best suited for the model-theoretic study of categories. Apart from that, one needs a (partial) binary operation  for composing morphisms, a unary function assigning to an object its identity morphism, and two unary functions assigning to a morphism its domain and range. The axioms in this language are self-evident  and the resulting $\langcat$-theory $\thcat$ will be called the \emph{theory of categories}. To see how natural all this is: any $\langcat$-structure is in an obvious way a category, any substructure is a subcategory and the $\langcat$-\homo{s} between categories are just the functors. Moreover, this language is very expressive: the existence of any finite limit or co-limit is now first-order expressible. But the main observation is universality (see \Thm{T:univcat}):

\begin{maintheorem}\label{T:main}
Any category of the form $\md T$, for some theory $T$ in some first-order language $L$, is universal.
\end{maintheorem} 

Here we write $\md T$ for the subcategory of models of $T$ which belong to a once-and-for-all fixed Grothendieck universe $\catset$ (see \S\ref{s:GU} for more details). We need to make such an assumption to ensure that the category is small (i.e., is not a class), and therefore a model of $\thcat$. 

In this introductory section, let me sketch a proof in one particular case, that of the category of Abelian groups $\md{T_{\text{Ab}}}$, where $T_{\text{Ab}}$ is the theory of commutative groups in the language $L=(+,-,0)$. This argument is how one would like to prove the main theorem, by explicitly giving the axioms. Unfortunately, so far, this is basically the only case where I succeeded in doing so. The general  case will be proven using ultrapowers: it turns out that any ultrapower $\ul{\md T}$ (with respect to an ultrafilter belonging to the universe $\catset$) of any model category $\md T$ is already a subcategory of the original $\md T$. Together with a characterization of elementarity using ultrapowers, (\Prop{P:KSirr}, a corollary of the famous Keisler-Shelah criterion, but which can easily be proved directly), this will yield the main theorem. But let me explain what I consider to be a better proof at the hand of the category of Abelian groups.

\subsection{Abelian groups}\label{s:grp}

Let $\cat C$ be an arbitrary category in our Grothendieck universe $\catset$. Assume it admits a null object $0$, meaning  that for any  object $G$, there are unique morphisms $0\to G$ and $G\to 0$;  their composition $G\to 0\to G$ will be denoted again by $0$. We will also assume that $\mathbb C$ admits (finite)  products $G_1\times G_2$ with projection maps $\pi_1$ and $\pi_2$. There are therefore  unique morphisms $\Delta_G\colon G\to G\times G$ and  $\tau_G\colon G\times G\to G \times G$, the \emph{diagonal}  and the \emph{twist}  respectively, such that $\pi_k\after \Delta_G=1_G$  and $p_k\after\tau_G\after \Delta_G=p_l$, with $\{k,l\}=\{1,2\}$.\footnote{If the object $G$ happens to be a set, then $\Delta_G(a)=(a,a)$ and $\tau_G(a,b)=(b,a)$.} Let us call a morphism $\mu\colon G\times G\to G$ a \emph{group arrow}, if it  satisfies the following four conditions   
\begin{enumerate}
 \item \label{i:ass} (Associativity) $\mu\after(1_G\times \mu)=\mu\after (\mu\times 1_G)$;
\item\label{i:unit} (Unit) $\mu\after(0\times 1_G)=p_2$;
\item\label{i:inv} (Inverse) there exists a morphism $\lambda\colon G\to G$, such that $\mu\after (\een\times\lambda)\after\Delta_G=0$;
\item\label{i:comm} (Commutativity) $\mu\after\tau_G=\mu$;
\end{enumerate}
Tacitly assumed in these axioms are the various isomorphisms between products, such as $(G\times G)\times G\iso G\times (G\times G)$, which we simply denote by $G\times G\times G$, etc.
If $G$ is an actual (additive) Abelian group, then we can take $\mu(a,b):=a+b$  and $\lambda(a):=-a$. Note that commutativity is needed for $\mu$ and $\lambda$ to be actual group \homo{s}. 

To define the $\langcat$-theory $\mathbf {AB}$, we now add to the existence of finite products and a null-object, the two axioms stating (Ab1) that any object $G$ admits a unique group arrow $\mu_G$, and (Ab2), if $f\colon G\to H$ is an arbitrary arrow, with $\mu_G$ and $\mu_H$ the respective unique group arrows on $G$ and $H$, then 
\begin{enumerate}
\setcounter{enumi}4
\item\label{i:lin} (Linearity) $\mu_H\after (f\times f)=f\after\mu_G$.
\end{enumerate}

To see that $\md{T_{\text{Ab}}}$ satisfies these axioms, taking the group arrow to define the group law on a structure, everything trivially holds except the uniqueness requirement in (Ab1). But this follows from   the following simple observation:

\begin{lemma}\label{L:unAb}
Let $(G,+,0)$ be an Abelian group and suppose $\mu\colon G\times G\to G$ defines a monoidal operation on $G$ with the same neutral element $0$. If $\mu$ is additive (for the induced addition on $G\times G$), then $\mu$ is just the group law.
\end{lemma}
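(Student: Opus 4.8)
The plan is to run the Eckmann--Hilton argument in its simplest incarnation, using only the hypothesis that $\mu$ is additive, i.e.\ a group \homo{} $G\times G\to G$ for the componentwise addition on $G\times G$, together with the hypothesis that $0$ is a (two-sided) neutral element for $\mu$. Notably, neither the associativity nor the commutativity of $\mu$ will be used, so the same conclusion holds for any additive \emph{unital magma} structure on $G$ with unit $0$.

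First I would record the homomorphism identity coming from additivity: for all $a,b,c,d\in G$,
\[
\mu(a+c,\,b+d)=\mu(a,b)+\mu(c,d).
\]
Next I would specialize it to $c=0$ and $b=0$, which gives $\mu(a,d)=\mu(a,0)+\mu(0,d)$ for all $a,d\in G$. Finally, since $0$ is by hypothesis the neutral element of the operation $\mu$, we have $\mu(a,0)=a$ and $\mu(0,d)=d$, and therefore $\mu(a,d)=a+d$. Thus $\mu$ coincides with the given group law, as claimed.

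There is essentially no obstacle here; the only point that deserves a moment's attention is the bookkeeping of which group structure each occurrence of $+$ refers to — the product group on $G\times G$ inside the homomorphism identity, and the original group $G$ on the outside — and the observation that the two distinguished elements, the neutral element for $+$ and the neutral element for $\mu$, are assumed to be the same element $0$, which is exactly what licenses the two evaluations $\mu(a,0)=a$ and $\mu(0,d)=d$. When this lemma is invoked to verify the uniqueness clause of (Ab1) for $\md{T_{\text{Ab}}}$, additivity of a competing group arrow $\mu$ is automatic, since the arrows in that category are precisely the group \homo{s}, and the axioms \eqref{i:unit} and \eqref{i:comm} force $0$ to be its neutral element.
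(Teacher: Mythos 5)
Your proof is correct and takes essentially the same route as the paper: both specialize the additivity identity by setting the two ``inner'' arguments to $0$ and then use that $0$ is the neutral element for $\mu$ to conclude $\mu(a,d)=a+d$. The only difference is cosmetic bookkeeping of how the pairs are written, plus your (accurate) side remark that associativity and commutativity of $\mu$ are never used.
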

\begin{proof}
By assumption, $\mu(a+a',b+b')=\mu(a,a')+\mu(b,b')$, for all $a,a',b,b'\in G$. Take $a'=0=b$, so that $\mu(a,b')=\mu(a,0)+\mu(0,b')$ and since $0$ is also the neutral element for the $\mu$-operation, the latter is just $a+b'$.
\end{proof}

Objects in a category $\cat C\models \mathbf{AB}$ are actually not yet groups, as the group arrow is only formal. But any  functor (preserving products and null objects) into the category of sets will turn them into actual groups and morphisms into group \homo{s}. This is best achieved by positing the existence of a generator $Z$ and taking as actualization the (faithful) functor $\hom{\cat C}Z-$ (for details, see  \S\ref{s:GU}). In this way, $\cat C$ becomes a subcategory of $\md{T_{\text{Ab}}}$. We will therefore also add the axiom asserting the existence of a generator to $\mathbf{AB}$, and so we proved:


\begin{corollary}\label{C:Ab}
The category $\md{T_{\text{Ab}}}$  is universal. \qed
\end{corollary}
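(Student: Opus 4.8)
The strategy is to show that any small category $\cat C$ that is elementarily equivalent to $\md{T_{\text{Ab}}}$ and of cardinality strictly less than $\set{\md{T_{\text{Ab}}}}$ embeds in $\md{T_{\text{Ab}}}$ as an $\langcat$-substructure (i.e., as a subcategory). By the preceding discussion, the key point is that $\md{T_{\text{Ab}}}$ is a model of the $\langcat$-theory $\mathbf{AB}$ consisting of: the existence of finite products, the existence of a null object, the axioms (Ab1) and (Ab2), and the existence of a generator. The uniqueness clause in (Ab1) for $\md{T_{\text{Ab}}}$ is exactly \Lem{L:unAb}; everything else is routine. Since $\mathbf{AB}$ is a set of $\langcat$-sentences and $\cat C\equiv\md{T_{\text{Ab}}}$, the category $\cat C$ is itself a model of $\mathbf{AB}$.

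First I would unwind what it means for $\cat C\models\mathbf{AB}$. Fix in $\cat C$ a null object $0$, a choice of binary product with projections, and a generator $Z$ (all of these exist by the axioms). For each object $G$ of $\cat C$, let $\mu_G\colon G\times G\to G$ be the unique group arrow granted by (Ab1), with inverse arrow $\lambda_G\colon G\to G$ from condition~\eqref{i:inv}. Now define the functor $F:=\hom{\cat C}Z-\colon\cat C\to\mathbf{Set}$. Because $Z$ is a generator, $F$ is faithful; because $F$ preserves finite limits (as any $\hom{\cat C}Z-$ does), it carries the product $G\times G$ to $F(G)\times F(G)$ and the null object to a one-point set. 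The arrow $\mu_G$ then induces a binary operation $F(\mu_G)\colon F(G)\times F(G)\to F(G)$, the arrow giving a neutral element $F(0)$ a distinguished element, and $\lambda_G$ an inversion; conditions~\eqref{i:ass}--\eqref{i:comm}, being equalities of morphisms, are preserved by $F$ and say precisely that $(F(G),F(\mu_G),\cdot)$ is an Abelian group. Likewise, the linearity axiom~\eqref{i:lin} applied to any arrow $f\colon G\to H$ in $\cat C$ shows that $F(f)$ is a group homomorphism. Thus $F$ factors through the forgetful functor, yielding a faithful functor $\widetilde F\colon\cat C\to\md{T_{\text{Ab}}}$ — provided the groups $F(G)$ land inside the fixed universe $\catset$, which holds since $\cat C$ lives in $\catset$ and $\catset$ is closed under the relevant constructions.

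It remains to upgrade "faithful functor" to "$\langcat$-embedding". A faithful functor need not be injective on objects, so I would first arrange injectivity on objects — e.g., by replacing $F$ by an isomorphic copy indexed so that distinct objects of $\cat C$ go to distinct (set-theoretically distinct) groups, using a bookkeeping device within $\catset$; here the cardinality hypothesis $\set{\cat C}<\set{\md{T_{\text{Ab}}}}$ gives room to do this. One then checks that the resulting $\widetilde F$ respects the $\langcat$-structure on the nose: it preserves domains, ranges, identities, and composition (it is a functor), and it preserves the group operation, which in the language $\langcat$ is not primitive but is definable from composition and the categorical product — and $\widetilde F$ preserves products and the chosen projection data by construction. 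Hence $\widetilde F$ realizes $\cat C$ as an $\langcat$-substructure of $\md{T_{\text{Ab}}}$, which is the desired conclusion. (Compare \S\ref{s:GU}, where this "actualization" functor is treated in detail.)

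**Main obstacle.** The genuine content is \emph{not} the embedding mechanics but the verification that $\md{T_{\text{Ab}}}\models\mathbf{AB}$ — specifically the \emph{uniqueness} of the group arrow, which is the whole point of \Lem{L:unAb} and explains why commutativity is built into the definition of "group arrow". The subtle technical point on the categorical side is ensuring that the actualization functor $\hom{\cat C}Z-$ is not merely faithful but can be promoted to an honest injective-on-objects $\langcat$-embedding into $\md{T_{\text{Ab}}}$, and that the image stays inside the universe $\catset$; this is where the Grothendieck-universe bookkeeping of \S\ref{s:GU} and the cardinality assumption are used. Once those are in hand, the corollary follows immediately from the definition of universality, since $\cat C\equiv\md{T_{\text{Ab}}}$ forces $\cat C\models\mathbf{AB}$.
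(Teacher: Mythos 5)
Your proposal is correct and follows essentially the same route as the paper: verify that $\md{T_{\text{Ab}}}\models\mathbf{AB}$ (with \Lem{L:unAb} supplying the uniqueness in (Ab1)), note that any small elementarily equivalent category $\cat C$ is then also a model of $\mathbf{AB}$, and use the faithful product-preserving functor $\hom{\cat C}Z-$ attached to a generator $Z$ to realize $\cat C$ as a subcategory of $\md{T_{\text{Ab}}}$. The only point you elaborate beyond the paper's sketch is injectivity on objects, which the paper dispatches by the convention that distinct objects have disjoint hom-sets (\Cor{C:conccat}).
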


The use of category theory to analyze model-theory is of course not new: Makkai's work led to what is now called FOCL (=first-order categorical logic; see \cite{MakPar,MakRey});  or Lawvere's perspective---from whom I also borrow   the use of the language $\langcat$---leading to ETCC (=elementary theory of the category of categories; see \cite{LawFunct,LawQS}) and its variants, just to mention a few. We use these perspectives in the present paper only in a very simple-minded way, by combining them in order to study some properties of the    models of the theory of a model-category. Often, these works  have focused  on complete theories, so that morphisms are taken to be elementary embeddings,  while we will look at the more natural category\footnote{No ordinary group theorist would  ever consider working  in the category of groups with just elementary embeddings.}
of models in a first-order language where the morphisms are just the \homo{s}.  Of course, in going from a first-order theory in an arbitrary language to the first-order theory of its category of models, something must be lost in translation, for there are only continu\-um many (complete) theories of (small) categories,  while there are far more languages $L$ and first-order $L$-theories. This raises the question how much is lost in translation? And how essential is this loss? Let me illustrate this with a simple example. Consider, let's say, the  first-order theory of fields FLD in the language of rings $L_{\text{ring}}$ and the corresponding category of its small models $\md {FLD}$ (see \S\ref{s:GU} for notation). Let $K$ be a field of particular interest (of possibly large cardinality) and take its elementary diagram in the language $L_{\text{ring}}(K)$.  The category of models of this latter theory is actually the co-slice category $K\downarrow\md{FLD}$ and therefore definable (with parameter $K$) inside $\md{FLD}$. As there are more than continuum many choices for $K$, there are non-elementarily equivalent  fields $K_1$ and $K_2$  whose co-slice categories are elementary equivalent. How are $K_1$ and $K_2$ then `algebraically related'? Since they are both definable in $\md{FLD}$,   therein then may lay the answer? 

In an appendix, we propose an answer to a question often raised: to which extent are elementary equivalence and equivalence of categories the same. In the usual language $\langcat$ (see \S\ref{s:lang} below), they definitely are not, as the sentence stating that whenever $A\iso B$ then $A=B$ holds in the skeleton of a category but not necessarily in the category itself (while both categories are always equivalent). The problem arises in the use of equality. In the literature, many solutions have been proposed (see, for instance, \cite{Blanc,FreydEquiv} or \cite[\S3]{Shul}), but they all seem  to use some dependent type theory, which is not the run-of-the-mill logic  model-theorists would normally consider.  Although not used frequently,   first-order logic without equality, seems more acceptable. But then function symbols become problematic, and so in stead we need to introduce a ternary predicate to substitute for the binary relation of composition. One also has to deal properly with the `core' of the category; details are discussed   in the Appendix (\S\ref{s:app}),  leading to

\begin{thm}\label{T:equivequiv}
There is a first-order logic in which  equivalent categories are elementarily equivalent.  
\end{thm}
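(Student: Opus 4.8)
The plan is to design a first-order logic---call it $\fo^=$-free categorical logic---whose signature replaces the problematic function symbols of $\langcat$ by relations, and then to verify that (i) every category canonically becomes a structure in this logic, (ii) elementary equivalence in this logic is invariant under equivalence of categories, and (iii) enough of ordinary categorical reasoning remains expressible. First I would set up the two-sorted language without equality: a sort for objects, a sort for morphisms, unary relations $\sym{dom}(f,A)$ and $\sym{cod}(f,A)$ (``$A$ is a domain/codomain of $f$''), a relation $\sym{Id}(A,f)$ (``$f$ is an identity on $A$''), and a ternary composition predicate $\sym{Comp}(f,g,h)$ (``$h$ is a composite $g\after f$''), together with the natural axioms $\thcat^{=\text{-free}}$ asserting that these relations behave functionally \emph{up to the canonical isomorphism-congruence} rather than up to equality. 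The crucial move is that in the absence of equality the only way two objects can be ``separated'' by a formula is through the morphism structure, and isomorphic objects are indistinguishable there; so I would prove a Lemma: if $F\colon\cat C\to\cat D$ is an equivalence of categories, then $F$ (together with a quasi-inverse) induces, on the associated $\fo^=$-free structures, a back-and-forth system witnessing $\cat C\equiv\cat D$.

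The key steps, in order, are: (1) fix the signature and write the axioms $\thcat^{=\text{-free}}$ so that every $\fo^=$-free model is genuinely (equivalent to) a category and every small category yields such a model $\cateo{\cat C}$; (2) prove that any equivalence $F$ with quasi-inverse $G$ and natural isomorphisms $\eta\colon\een\Rightarrow GF$, $\varepsilon\colon FG\Rightarrow\een$ gives a \emph{partial isomorphism family} between $\cateo{\cat C}$ and $\cateo{\cat D}$---concretely, the family of finite partial maps sending a tuple of objects/morphisms of $\cat C$ to objects/morphisms of $\cat D$ that are, respectively, isomorphic (via the structure-isomorphisms coming from $\eta,\varepsilon$) to their $F$-images---and show this family has the back-and-forth extension property; (3) invoke the standard Fra\"iss\'e / Ehrenfeucht criterion (valid verbatim for logic without equality, once one uses partial \emph{isomorphisms} in the equality-free sense, i.e. maps preserving and reflecting all the basic relations) to conclude $\cateo{\cat C}\equiv\cateo{\cat D}$; (4) finally, address the ``core'' of the category---the groupoid of isomorphisms---by checking that $\sym{Comp}$, $\sym{Id}$, $\sym{dom}$, $\sym{cod}$ are enough to define, first-order and without equality, the relation ``$f$ is an isomorphism'' and ``$A\iso B$'', so that the logic still sees the isomorphism-type information one expects, while remaining blind to the artificial distinction between a category and its skeleton that defeats $\langcat$.

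The back-and-forth verification in step (2)--(3) is where the real content sits, and I expect the main obstacle to be bookkeeping the coherence data: when I extend a partial isomorphism by a new morphism $f\colon A\to B$ of $\cat C$, I must produce a morphism of $\cat D$ that simultaneously respects \emph{all} previously matched domains, codomains, identities and composites, and this forces me to transport $f$ not just by $F$ but by conjugating with the already-chosen structure-isomorphisms $\varepsilon_{A}^{-1}\after(\cdot)\after\varepsilon_{B}$ (and their $\eta$-counterparts on the other side). Making the choices compatible---so that, e.g., a composite that was matched earlier stays matched---amounts to checking that the naturality squares for $\eta$ and $\varepsilon$, together with the triangle identities of the adjoint equivalence, make the whole diagram commute; this is routine diagram-chasing but is the step that must be written with care, since it is exactly where a naive ``just apply $F$'' attempt breaks on equality. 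Once that coherence is in hand, the reduction to the classical Ehrenfeucht--Fra\"iss\'e theorem for equality-free first-order logic is immediate, and \Thm{T:equivequiv} follows; the remaining details, including the precise axiom list and the definability of the core, are deferred to \S\ref{s:app}.
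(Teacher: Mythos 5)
Your overall strategy --- pass to an equality-free relational language in which composition becomes a ternary predicate, and show that elementary equivalence in that language is invariant under equivalence of categories --- is the same as the paper's (Appendix, \S\ref{s:app}). But there is a genuine gap at the very first step, and it is the step that carries the real content. You interpret $\sym{Comp}$, $\sym{dom}$, $\sym{cod}$, $\sym{Id}$ ``up to the canonical isomorphism-congruence,'' but no such canonical congruence exists: in a general category there is no canonical choice of isomorphism between isomorphic objects. If you instead read the predicates literally, the theorem is false for your logic: let $\cat C$ be the terminal category and $\cat D$ the ``walking isomorphism'' (two isomorphic objects $A,A'$ and four morphisms); these are equivalent, yet the equality-free sentence $(\exists f)(\exists g)(\forall X)\,\neg(\sym{dom}(f,X)\wedge\sym{dom}(g,X))$ holds in $\cat D$ (take $f=1_A$, $g=1_{A'}$) and fails in $\cat C$, so your back-and-forth cannot even get started. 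If you instead read them as ``up to \emph{some} isomorphism'' (existentially quantified), you lose the ability to recover strict composition, and with it the expressiveness (limits, co-limits) you want in your step (4). The paper's resolution is the missing idea: fix an \emph{iso-graph} $\cat i$, a thin wide subcategory with $\hom{\cat i}AB\neq\emptyset$ exactly when $A\iso B$ and with only identities as automorphisms, and use a single ternary quasi-composition predicate $\sym{QC}_{\cat i}(f,g,h)$ asserting a commutative diagram whose connecting arrows lie in $\cat i$. Thinness is what makes \Lem{L:comp} work ($\sym{QC}$ applied to literally composable data recovers $h=g\after f$), which is what keeps the logic expressive; and the objects are dropped altogether (the final language is one-sorted, on morphisms only), rather than kept as a second sort as in your signature.

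Granting a correct semantics, your proof route also differs from the paper's. You propose a back-and-forth system built from an adjoint equivalence $(F,G,\eta,\varepsilon)$, transporting morphisms by conjugation with the unit and counit; you correctly identify the coherence bookkeeping as the hard part, but you would additionally have to make $\eta$ and $\varepsilon$ compatible with the two chosen iso-graphs, a further non-canonical choice. The paper sidesteps all of this in \Thm{T:equivelem}: since two categories are equivalent exactly when their skeletons are isomorphic, it suffices to compare a category with its skeleton, whose iso-graph is discrete; one then builds a retraction functor that preserves iso-graphs and is the identity on the skeleton, and concludes by a short induction on quantifiers, with \Lem{L:Iff} handling the quantifier-free case. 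Your Ehrenfeucht--Fra\"iss\'e route is workable in principle (and would even yield $L_{\infty\omega}$-equivalence, more than is claimed), but as written it both presupposes the unresolved semantics and takes on coherence obligations that the skeleton reduction renders unnecessary.
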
 

I do not know whether the converse is also true. This first-order logic, nonetheless,  is   sufficiently   expressive to formulate categorical properties such as the the existence of (finite) limits and co-limits. This also allows us to formulate a `non-evil'\footnote{i.e., not breaking the principle of equivalence.} version of our Main Theorem.

\section{The model-theory of categories}\label{s:lang}
While many categories have class-many objects and/or morphisms, technically, they cannot be considered as first-order structures. We will introduce the proper framework to deal with this issue, namely, Grothendieck universes, in \S\ref{s:GU}. For now, we simply assume that our categories are small (i.e., have set-many objects and morphisms). Let $\langcat$ be a two-sorted language, with the $\sym o$-sort for objects and the $\sym m$-sort for morphisms, together with a binary function $\after$ on the $\sym m$-sort, two unary functions $\sym {dom}$ and $\sym{rng}$ from the  $\sym m$-sort to the $\sym o$-sort, and a unary function $\sym{Id}$ from the $\sym o$-sort to the $\sym m$-sort subject. The interpretation of these symbols in an arbitrary category $\mathbb C$ are then respectively the composition $g\after f$ of morphisms $g\colon B\to C$ and $f\colon A\to B$, the domain $A$ and range $B$ of a morphism $f\colon A\to B$, and the identity morphism $1_A$ on an object $A$. Note that we treat $\after$ as a partial function and we do not care what it does on non-composable morphisms. We take the convention that variables ranging over the $\sym o$-sort are capitalized and those over the $\sym m$-sort are lower-case, so that we do not need to specify this each time. 

\begin{definition}\label{D:thcat}
Let $\thcat$ be the theory given by the following three axioms
\begin{enumerate}
\item \label{i:domrngcomp}  $(\forall x,y)$, if $ \sym {rng}(x)=\sym {dom}(y)$, then $ \sym {dom}(x)=\sym {dom}(y\after x)$ and $\sym {rng}(y)=\sym {rng}(y\after x)$;
\item \label{i:domrngass}$(\forall x,y,z)$, if  $\sym {rng}(x)=\sym {dom}(y)$ and $\sym {rng}(y)=\sym {dom}(z)$, then  $(z\after y)\after x=z\after (y\after x)$;
\item \label{i:id}  $(\forall X,y)$,   if $\sym{dom}(y)=X$ (respectively, $\sym{rng}(y)=X$), then $y\after \sym{Id}(X)= y$ (respectively, $\sym{Id}(X)\after y=y$);
\end{enumerate}
\end{definition} 
To conform more to the usual notation, we will adopt the following notational conventions for arbitrary $\langcat$-structures. If $f$ is a morphism with $A=\sym{dom}(f)$ and $B=\sym{rng}(f)$, then we simply indicate this by writing $f\colon A\to B$. The identity morphism $\sym{Id}(A)$ on an object $A$ is denoted simply $1_A$. This is justified by the following fact.

\begin{lemma}\label{L:catstr}
If $\mathcal C$ is a model of $\thcat$, then one can associate a category $\mathbb C$ to it such that as $\langcat$-structures, they are isomorphic. Moreover, if $\mathcal D$ is a second model of $\thcat$ with associated category $\mathbb D$, then the $\langcat$-\homo{s} between $\mathcal C$ and $\mathcal D$ induce precisely the functors  between $\mathbb C$ and $\mathbb D$.
\end{lemma}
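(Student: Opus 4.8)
The plan is to make the obvious construction explicit and then check, mechanically, that the axioms of $\thcat$ match the axioms of a category, and that $\langcat$-homomorphisms match functors. First I would fix a model $\mathcal C$ of $\thcat$ and define the associated category $\mathbb C$ by declaring its object class to be the underlying set of the $\sym o$-sort, and, for objects $A,B$, setting $\hom{\mathbb C}AB$ to be the set of morphisms $f$ in the $\sym m$-sort with $\sym{dom}(f)=A$ and $\sym{rng}(f)=B$; composition $\hom{\mathbb C}BC\times\hom{\mathbb C}AB\to\hom{\mathbb C}AC$ is the restriction of $\after$, which lands in the right hom-set by axiom~\eqref{i:domrngcomp}, and the identity of $A$ is $\sym{Id}(A)$, which lies in $\hom{\mathbb C}AA$ because $\sym{dom}$ and $\sym{rng}$ of $\sym{Id}(A)$ must equal $A$ — this last point needs a word: it is forced by axiom~\eqref{i:id} together with~\eqref{i:domrngcomp}, since $\sym{Id}(A)=\sym{Id}(A)\after\sym{Id}(A)$ has domain and range $\sym{dom}(\sym{Id}(A))$ and $\sym{rng}(\sym{Id}(A))$, and applying~\eqref{i:id} to $y=\sym{Id}(A)$ pins these down to $A$. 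Associativity of this composition is exactly axiom~\eqref{i:domrngass}, and the unit laws are exactly axiom~\eqref{i:id}; so $\mathbb C$ is a genuine category.

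Next I would verify that $\mathbb C$, regarded back as an $\langcat$-structure in the evident way (objects, morphisms, $\after$, $\sym{dom}$, $\sym{rng}$, $\sym{Id}$ all interpreted as the category-theoretic data just described), is isomorphic to $\mathcal C$ as an $\langcat$-structure. The underlying sorts are literally the same sets, and the identity map on each sort is the required isomorphism: it is a bijection by construction, and it commutes with every function symbol because the interpretation of each symbol in $\mathbb C$ was \emph{defined} to be its interpretation in $\mathcal C$ (composition is the restriction of $\after$, but $\after$ was only ever meant as a partial operation on composable pairs, so on the part of the domain that matters the two agree; here one uses the paper's stated convention that we do not care about $\after$ on non-composable morphisms, so the structures are identified up to that harmless indeterminacy). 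This establishes the first assertion.

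For the second assertion, let $\mathcal D\models\thcat$ with associated category $\mathbb D$, and let $h\colon\mathcal C\to\mathcal D$ be an $\langcat$-homomorphism, i.e.\ a pair of maps $h_{\sym o}$ on objects and $h_{\sym m}$ on morphisms commuting with $\after$, $\sym{dom}$, $\sym{rng}$, $\sym{Id}$. I would read off directly that $h$ \emph{is} a functor $\mathbb C\to\mathbb D$: compatibility with $\sym{dom}$ and $\sym{rng}$ says $h_{\sym m}$ sends $\hom{\mathbb C}AB$ into $\hom{\mathbb D}{h_{\sym o}(A)}{h_{\sym o}(B)}$; compatibility with $\after$ is functoriality on composites; compatibility with $\sym{Id}$ is preservation of identities. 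Conversely, any functor $F\colon\mathbb C\to\mathbb D$ yields such a pair of maps, and the functor axioms give back exactly the homomorphism conditions, so the correspondence is a bijection between $\langcat$-homomorphisms $\mathcal C\to\mathcal D$ and functors $\mathbb C\to\mathbb D$.

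The only genuinely non-routine point — and hence the main obstacle, though it is a small one — is the handling of $\after$ as a partial function: a strict first-order $\langcat$-structure interprets $\after$ as a total operation, so in passing between $\mathcal C$ and $\mathbb C$ one must be careful that ``isomorphic as $\langcat$-structures'' and ``homomorphism of $\langcat$-structures'' are read with the convention, stated just before the lemma, that the value of $\after$ on non-composable pairs is irrelevant (equivalently, one works with the definable relation $\sym{rng}(x)=\sym{dom}(y)$ guarding each use of $\after$). Once that convention is in force, every verification above is a direct unwinding of Definition~\ref{D:thcat} against the definition of a category and of a functor, with no real computation.
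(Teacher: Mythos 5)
Your construction follows the same route as the paper's own (very terse) proof: a direct unwinding of Definition~\ref{D:thcat} against the definitions of category and functor, with the same identification of the two sorts, of composition with $\after$, and of functors with $\langcat$-homomorphisms. Your handling of the partiality of $\after$ and of the homomorphism/functor correspondence is fine and only spells out what the paper leaves implicit.

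There is, however, one step that does not work as written: your derivation that $\sym{Id}(A)$ lies in $\hom{\mathbb C}AA$. You invoke the equation $\sym{Id}(A)=\sym{Id}(A)\after\sym{Id}(A)$, but axiom~\eqref{i:id} only yields $y\after\sym{Id}(X)=y$ under the hypothesis $\sym{dom}(y)=X$; applied to $y=\sym{Id}(A)$ it gives $\sym{Id}(A)\after\sym{Id}(B)=\sym{Id}(A)$ with $B:=\sym{dom}(\sym{Id}(A))$, which is the equation you want only when $B=A$ --- precisely what is to be proved. The argument is circular, and in fact the three axioms of $\thcat$ as literally stated do not entail $\sym{dom}(\sym{Id}(A))=A$: take the $\sym o$-sort to be $\{A,B\}$ and the $\sym m$-sort to be $\{e\}$ with $\sym{dom}(e)=\sym{rng}(e)=B$, $e\after e=e$, and $\sym{Id}(A)=\sym{Id}(B)=e$; all three axioms hold (axiom~\eqref{i:id} is vacuous at $X=A$), yet $\hom{\mathbb C}AA$ is empty, so the associated structure is not a category. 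The fact you need must therefore be added to $\thcat$ as a further (self-evident) axiom, namely $\sym{dom}(\sym{Id}(X))=X=\sym{rng}(\sym{Id}(X))$, rather than derived from the other three; with that axiom in place the rest of your verification goes through verbatim. The paper's proof silently assumes this, so you have in effect located a small omission in the axiom list rather than committed an error of substance --- but the specific derivation you offer should be replaced by the extra axiom.
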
 
\begin{proof}
Consider the category $\mathbb C$ by taking for objects, the elements of the $\sym o$-sort in $\mathcal C$, and for morphisms, the elements of the $\sym m$-sort, with composition of morphisms given by $\after$ and the identity morphism given by $\sym{Id}$. The axioms of $\thcat$ then guarantee that $\mathbb C$ is a category and it is not hard to see that as an $\langcat$-structure, it is isomorphic to $\mathcal C$.  By definition of $\langcat$-structure, the interpretation of  the symbols imply that  a \homo\ $\varphi\colon \mathcal C\to\mathcal D$, sends a morphism $f\colon A\to B$ to a morphism $\varphi(f)\colon \varphi(A)\to \varphi(B)$ and preserves composition, showing that $\varphi$ induces a functor $\mathbb C\to\mathbb D$, and conversely.
\end{proof} 

We will therefore henceforward identify $\mathcal C$ with its associated category $\mathbb C$. In line with common notation, we also let $\hom{\mathbb C}AB$ denote the collection of all $f\colon A\to B$, that is to say, of all elements $y$ of the $\sym m$-sort such that $\sym{dom}(y)=A$ and $\sym{rng}(y)=B$. We refer to $\hom {\mathbb C}AB$ as a \emph{hom-set}. Note that it is an $\langcat$-definable subset in $\mathbb C$ with parameters $A$ and $B$.

\subsection{Limits and co-limits}\label{s:prod}
A major tool in category theory is the notion of limit. As far as finite limits are concerned, they are easily seen to be definable. We quickly review their definition. Given a functor $J\colon \mathbbm d\to \mathbb C$ (here $\mathbbm d$ is a category merely used for indexing purposes), a \emph{$J$-cone}   is a pair $(C,\psi)$, where $C$ is an object in $\mathbb C$, and for each object $A$ in $\mathbbm d$, we have a $\mathbb C$-morphism $\psi_A\colon C\to J(A)$ such that for each arrow $f\colon A\to B$ in $\mathbbm d$ we have $J(f)\after \psi_A=\psi_B$.  A $J$-cone $(L,\varphi)$ is called a \emph{limit} of $J$, if it is universal among all $J$-cones, that is to say, given any other $J$-cone $(C,\psi)$, there is a unique morphism $u\colon C\to L$,  such that $\varphi_A\after u=\psi_A$, for all objects $A$ in $\mathbbm d$.

\begin{example}\label{E:equal}
Let $\mathbbm d$ be the category with two objects and two morphisms between them (apart, of course, from the identity morphism on each object); symbolically $\bullet\rightrightarrows\bullet$. A functor $J$ from $\cat d$ into some category then yields two  `parallel morphisms' $A\rightrightarrows B$ in this category and a cone $(C,\varphi,\psi)$ then consists of two morphisms $\varphi\colon C\to A$ and $\psi\colon C\to B$, such that composing $\varphi$ with either morphism $A\to B$ yields $\psi$. The universal cone is then called the \emph{equalizer} of the two morphisms $A\rightrightarrows B$. If $A$ and $B$ are sets, then the equalizer  $E$ is just the subset of all elements in $A$ that have the same image in $B$ under both morphisms. 

Note that this last example can be extended to any two $L$-\homo{s}, where $L$ is a first-order language. Indeed, since \homo\ must preserve the constant and symbol functions of $L$, as a set, $E$ is then closed under these symbols, and therefore is again an $L$-structure. 
\end{example} 

If $\mathbbm d$ is finite (meaning both the number of objects as well as morphisms is finite), then one can easily spell this out by means of a first-order sentence in the language $\langcat$. Let me just show this for the case of a product (here,   $\cat d$ is the discrete category with two objects, symbolically $\bullet\quad\bullet$). A functor from this category  into $\mathbb C$ is then equivalent with picking two objects $X$ and $Y$. Their \emph{product} consists of an object, often denoted $X\times Y$, and two morphisms $\pi_X\colon X\times Y\to X$ and $\pi_Y\colon X\times Y\to Y$, such that given any object $Q$ and morphisms $\alpha_X\colon Q\to X$ and $\alpha_Y\colon Q\to Y$, we have a unique morphism $u\colon Q\to X\times Y$, yielding a commutative diagram (of `cones')
\begin{equation}\label{eq:prod}
\xymatrix{
&Q\ar[d]^u\ar[ddl]_{\alpha_X}\ar[ddr]^{\alpha_Y}\\
&X\times Y\ar[dl]^{\pi_X}\ar[dr]_{\pi_Y}\\
X&&Y.
}
\end{equation} 
It is clear that there is therefore a first-order sentence $\sym{Prod}$ claiming the existence of a product for any two objects. 

There is also a dual notion, \emph{co-limit}, which this times is a universal  co-cone, meaning that the arrows $\psi_A$ go from $J(A)$ to the co-cone. For instance, the \emph{co-product}, is given by the dual diagram, that is to say, in which all arrows in \eqref{eq:prod} are reversed. 

\begin{corollary}\label{C:lim}
Let $L$ be a   first-order language  and $\cat{Mod}(L)$  the category of $L$-models. Then $\cat{Mod}(L)$   is complete (i.e., closed under all  limits). Moreover, $\cat{Mod}(L)$ admits co-equalizers and filtered co-limits and whenever the language does not contain constant symbols and all function symbols are unary, then it is also co-complete. Among these (co-)limits, the finite ones are then definable.
\end{corollary}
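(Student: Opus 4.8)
The plan is to verify each assertion of Corollary~\ref{C:lim} by the standard construction of (co-)limits in categories of $L$-structures, carried out so that it is visibly compatible with the morphisms being arbitrary $L$-\homo{s} rather than elementary embeddings. First I would treat completeness: given a diagram $J\colon\cat d\to\cat{Mod}(L)$, I would form the product $\prod_i M_i$ of the underlying sets, equip it with the componentwise interpretation of the function, constant and relation symbols of $L$ (which makes it an $L$-structure with the projections being \homo{s}), and then take the equalizer-style substructure cut out by all the equations $J(f)\after\pi_A=\pi_B$ imposed by arrows $f\colon A\to B$ of $\cat d$; by \Examp{E:equal} this subset is closed under the $L$-operations, hence is again an $L$-structure, and one checks the universal property exactly as for sets. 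Since one can build arbitrary limits from products and equalizers, this gives all limits, so $\cat{Mod}(L)$ is complete.

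Next I would dispose of the easy co-limit claims. Filtered co-limits are computed on underlying sets — the directed union (more precisely, the co-limit of the underlying sets) carries a well-defined $L$-structure because every tuple and every instance of a function or relation symbol already lives in some stage of the diagram, and filteredness lets one check the axioms stage-by-stage; the structure maps are then \homo{s} and the universal property is routine. For co-equalizers of a parallel pair $f,g\colon M\rightrightarrows N$ I would pass to the quotient of $N$ by the smallest \emph{congruence} containing all pairs $(f(m),g(m))$: the quotient set inherits an $L$-structure (relations interpreted as the images of the relations on $N$, i.e.\ the relation holds on a tuple of classes iff it holds on \emph{some} lift — this is forced, and is exactly why co-equalizers need not behave well for relational symbols), the quotient map is a \homo, and universality follows because any \homo\ out of $N$ equalizing $f$ and $g$ factors set-theoretically through the congruence and the factorization automatically respects the $L$-structure.

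For the full co-completeness statement under the hypothesis that $L$ has no constant symbols and only unary function symbols, the key extra point is the existence of co-products; combined with co-equalizers this yields all co-limits. Here I would take the disjoint union $\coprod_i M_i$ of the underlying sets: with no constants and only unary operations, a unary function symbol acts within each summand, so the componentwise interpretation makes $\coprod_i M_i$ an $L$-structure, the coprojections are \homo{s}, and the universal property is immediate since any family of \homo{s} $M_i\to N$ glues to a set map $\coprod M_i\to N$ that is automatically a \homo. (The hypothesis is exactly what prevents the disjoint union from failing to be an $L$-structure: a constant symbol would have to be interpreted once but has $|I|$ candidate values, and a binary operation would need to be defined on cross-terms $a\in M_i$, $b\in M_j$ with $i\ne j$.) Finally, the definability of the finite (co-)limits is not an extra argument at all: it was already established in \S\ref{s:prod}, where we exhibited, for any finite indexing category $\cat d$, an explicit $\langcat$-sentence (generalizing $\sym{Prod}$) asserting existence of the limit, and dually for co-limits.

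I expect the main obstacle to be bookkeeping rather than a genuine difficulty: one must be careful, throughout, to interpret relation symbols in the \emph{correct} (forced) way in quotients and co-products — as images of the given relations — and to note that this is precisely the choice that makes the structure maps \homo{s} and the universal properties go through; and one must pin down exactly why constants and higher-arity operations break the co-product construction, since that is the whole content of the hypothesis in the co-completeness clause. None of these steps requires more than the observations already in \Examp{E:equal} and \S\ref{s:prod}, so I would present the proof as a sequence of short verifications rather than a single computation.
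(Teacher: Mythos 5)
Your proposal is correct and follows essentially the same route as the paper: limits from products plus the equalizer substructures of \Examp{E:equal}, filtered co-limits and co-equalizers computed on underlying sets, co-products as disjoint unions under the stated hypothesis, and definability of the finite cases deferred to \S\ref{s:prod}. The only (cosmetic) divergence is in the co-equalizer: you quotient by the smallest \emph{congruence} containing the pairs $(f(m),g(m))$, whereas the paper quotients by the equivalence relation they generate and then \emph{proves} it is already a congruence using that $f$ and $g$ are \homo{s} --- the two constructions yield the same structure.
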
 
\begin{proof}
It is known that if a category admits equalizers and  products, then it is closed under all  limits (see, for instance, \cite[\S V.2, Thm 1]{MS}). While not definable, arbitrary products do exist in $\cat{Mod}(L)$. For filtered co-limits (\emph{aka} direct limits), see \cite[Theorem 2.4.6]{Hod}. Since I could not find a good reference for co-equalizers, I provide a proof here: given two $L$-\homo{s} $f,f'\colon \mathcal A\rightrightarrows \mathcal B$, define a relation on the underlying set $B:=D(\mathcal B)$ by $x\sim x'$, for $x,x'\in B$, if there is some $a\in A:=D(\mathcal A)$ such that $x=f(a)$ and $x'=f'(a)$. Let $E\sub B\times B$ be the transitive closure of $\sim$, which therefore is an equivalence relation and let $C:=B/E$ be the set of $E$-equivalence classes $\beta:=[b]$, for $b\in B$. We make $C$ into an $L$-structure $\mathcal C$ as follows. For a constant symbol $\sym c$, we let $\sym c^{\mathcal C}$ just be the equivalence class of $\sym c^{\mathcal B}$. For an $n$-ary function symbol $\sym F$, define $\sym F^{\mathcal C}([b_1],\dots,[b_n])$ as the equivalence class of $\sym F^{\mathcal B}(b_1,\dots,b_n)$ and for an $n$-ary relation symbol $\sym R$, declare that $(\beta_1,\dots,\beta_n)$ lies in $\sym R^{\mathcal C}$ \iff\ there are $b_i\in\beta_i$ such that $(b_1,\dots,b_n)\in \sym R^{\mathcal B}$. Only the case of a function symbol requires checking that it is independent from the choice of $E$-representative. So let $b_i'$ be $E$-equivalent to $b_i$. By an easy induction on the transitive closure, we may already assume that $b_i\sim b_i'$, say $b_i=f(a_i)$ and $b_i'=f'(a_i)$. Let's write $b$, $b'$ and $a'$ for the these respective $n$-tuples. Since $f$ and $f'$ are \homo{s}, we get
\[
\sym F^{\mathcal B}(b)=\sym F^{\mathcal B}(f(a))=f(\sym F^{\mathcal A}(a)) \qquad\text{and}\qquad \sym F^{\mathcal B}(b')=\sym F^{\mathcal B}(f'(a))=f'(\sym F^{\mathcal A}(a))
\]
showing that $\sym F^{\mathcal B}(b)\sim \sym F^{\mathcal B}(b')$.   It is also clear that the canonical map $B\to C\colon b\mapsto [b]$ induces a \homo\ $p\colon \mathcal B\to\mathcal C$ and by design,  $p\after f=p\after f'$.  To verify the universal property of co-equalizers, let $\mathcal M$ be an arbitrary  structure and $q\colon \mathcal B\to \mathcal M$ a \homo\ such that $q\after f= q\after f'$. Define a map $u\colon C\to D(\mathcal M)$ by the rule $[b]\mapsto q(b)$. To show that this is independent from the choice of representative $b$, let $b'$ be $E$-equivalent to $b$ and again we may assume by induction that $b\sim b'$, say $b=f(a)$ and $b'=f'(a)$, for some $a\in A$. Hence $q(b)=q(f(a))=q(f'(a))=q(b')$, showing that $u$ is well-defined and it is easy to see that it is an $L$-\homo\ and $q=u\after p$.  It is the unique \homo\ satisfying the latter factorization since $p$ is surjective. 

Assume next that $L$ has no constant symbols and all function symbols are unary. Then we define the co-product of a collection of $L$-structures $\mathcal M_x$, for $x\in X$, as the structure whose underlying set $U$ is the disjoint union of all $D(\mathcal M_x)$ and we endow it with an $L$-structure $\mathcal U$ by parallelism. That is to say, for an $n$-ary relation symbol $\sym R$, an tuple $(a_1,\dots,a_n)\in C$ lies in $\sym R^{\mathcal U}$ \iff\ there is an $x\in X$ such that all $a_i\in D(\mathcal M_x)$ and $\rij an\in \sym R^{\mathcal M_x}$. If $\sym F$ is a unary function symbol, then we let $\sym F^{\mathcal U}$ send an element $a\in U$ to $\sym F^{\mathcal M_x}(a)$, where $x\in X$ is the unique index such that $a\in D(\mathcal M_x)$.  Note that arities other than  one pose the problem that `mixed' tuples are ambivalent as to which function should apply to them (this includes constant symbols) and therefore have to be excluded. We leave it to the reader to verify that $\mathcal U$ satisfies the universal property of co-products. Co-completeness follows once more in this case from the existence of co-products and co-equalizers.
\end{proof} 
\begin{remark}\label{R:lim}
Whenever a (co-)limit exists, it is actually computed as in the category of sets. More precisely, the underlying-set functor $D$ reflects and preserves (co-)limits, if they exist. In particular, $\cat{Mod}(L)$ is  a regular (and even exact) category.

Note that the morphism $p\colon \mathcal B\to \mathcal C$ of the co-equalizer will not be in general strong, so the same result fails for the category of $L$-structures with strong \homo{s} (see \S\ref{s:strhomo} below for this category).
\end{remark}

\section{Ultraproducts and elementarity}\label{s:up}
Recall that an ultrafilter $\mathcal U$ on a set $X$ is a collection of non-empty subsets closed under finite intersection and superset, such that any subset of $X$ or its complement belongs to $\mathcal U$. One thinks of subsets in $\mathcal U$ as big and the others are then small. Given any sequence of sets $S_x$ indexed by $x\in X$, let $S_\infty$ be their Cartesian product. On $S_\infty$, the ultrafilter now induces an equivalence relation: we say that two elements $(a_x)_x$ and $(b_x)_x$ in $S_\infty$ are equivalent, if almost all their entries are the same, by which we mean that the set of all $x\in X$ such that $a_x=b_x$ lies in $\mathcal U$ (i.e., is big). We call the resulting quotient space the \emph{ultraproduct} of the $S_x$ and will often denote  it as $\ul S$. If $L$ is a first-order language and all $S_x$ are $L$-structures, then $\ul S$ inherits an $L$-structure from the Cartesian product $S_\infty$. The key result, called \los, is that an $L$-sentence $\sigma$ holds in $\ul S$ \iff\ it holds in almost all $S_x$ (see, for instance, \cite[\S 4.2]{Roth}). 

If all the $S_x$ are the same set $S$, so that $S_\infty=S^X$, then we call $\ul S$ the \emph{ultrapower} of $S$. It follows from \los\ that $\ul S$ is elementary equivalent to $S$. In fact, there is a canonical map $S\to \ul S$ sending an element $s\in S$ to the equivalence class of the constant map $x\mapsto s$, called the \emph{diagonal embedding}, and one checks that this actually yields an elementary embedding. By a famous theorem of Keisler and Shelah, elementarity can be detected by the ultrapower construction: two $L$-structures are elementary equivalent \iff\ they have some isomorphic ultrapowers (possibly with respect to different ultrafilters). Its proof, however, seems to rely on the generalized continuum hypothesis, but for our purposes, the following immediate corollary of it can easily be proved directly.

In the sequel, we will denote $L$-structures, for some first-order language $L$, by script letters $\mathcal A,\mathcal M, \dots$, and we let $D(\mathcal A), D(\mathcal M) \dots$ denote their underlying subsets.

\begin{proposition}\label{P:KSirr}
Two  first-order structures are elementary equivalent \iff\ there exists an elementary embedding from one into an ultrapower of the other.
\end{proposition}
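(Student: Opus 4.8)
The plan is to prove the two implications separately. One direction is essentially free: if there is an elementary embedding $j\colon \mathcal A \hookrightarrow \ul{\mathcal B}$ for some ultrapower $\ul{\mathcal B}$ of $\mathcal B$, then $\mathcal A \equiv \ul{\mathcal B}$ since elementary embeddings preserve and reflect all first-order sentences, and $\ul{\mathcal B}\equiv\mathcal B$ by \los\ (as recalled just before the statement, the diagonal embedding $\mathcal B\to\ul{\mathcal B}$ is elementary). Chaining these gives $\mathcal A\equiv\mathcal B$. So the whole content is in the forward direction: assuming $\mathcal A\equiv\mathcal B$, produce an ultrafilter $\mathcal U$ on some index set and an elementary embedding $\mathcal A\hookrightarrow \mathcal B^{\mathcal U}$.

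For the forward direction I would avoid invoking the full Keisler--Shelah isomorphism theorem (and hence GCH) and instead build the embedding directly by a compactness/elementary-chain argument, using only that elementary classes are closed under ultraproducts. The idea: index by the set $X$ of all pairs $(\bar a,\varphi)$ where $\bar a$ is a finite tuple from $D(\mathcal A)$ and $\varphi(\bar x)$ is an $L$-formula with $\mathcal A\models\varphi(\bar a)$; order $X$ by reverse inclusion of the finite ``approximations'' they encode, making it a directed set. For each such finite piece of data, $\mathcal B\equiv\mathcal A$ lets us realize it: there is a tuple in $\mathcal B$ satisfying the finitely many formulas recorded so far. Take an ultrafilter $\mathcal U$ on $X$ that refines the Fréchet-type filter of ``upward cones'' (so that every basic requirement is eventually satisfied on a $\mathcal U$-large set), form $\mathcal B^{\mathcal U}$, and define $j(a)$ on $D(\mathcal A)$ by sending $a$ to the class of the function picking, at each index, a witnessing tuple's relevant coordinate. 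By \los\ and the cofinality of the requirement-cones, $j$ preserves every first-order formula, so it is an elementary embedding. (An alternative, perhaps cleaner, packaging is: the elementary diagram of $\mathcal A$ in $L(D(\mathcal A))$ is finitely satisfiable in $\mathcal B$ because $\mathcal A\equiv\mathcal B$, so by the standard ``ultrafilter proof of compactness'' there is an ultrapower of $\mathcal B$ modelling it, which is exactly an elementary embedding of $\mathcal A$.)

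The main obstacle is purely bookkeeping: one must choose the index set and the ultrafilter so that \emph{all} formulas over \emph{all} finite tuples of $\mathcal A$ get tested simultaneously, and then verify that the diagonal-style map is well-defined (independent of the arbitrary choices of witnesses) and total on $D(\mathcal A)$. Both points are handled by the directedness of $X$ together with the fact that a single tuple in $\mathcal A$ may be required consistently by many formulas at once—finite satisfiability in $\mathcal B$ (from $\mathcal A\equiv\mathcal B$) is what guarantees the simultaneous witnesses exist on a large set. No genuinely hard step remains; the argument is the familiar ``compactness via ultraproducts'' dressed up to output an elementary embedding rather than just a model, and it deliberately sidesteps the set-theoretic subtleties of Keisler--Shelah.
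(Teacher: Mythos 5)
Your proposal is correct and follows essentially the same route as the paper: index by finite fragments of an elementary diagram, realize each fragment in the other structure using elementary equivalence, extend the filter of upward cones to an ultrafilter, and conclude by \los\ together with the elementary diagram lemma (your parenthetical "cleaner packaging" is almost verbatim the paper's argument). The only cosmetic difference is that the paper embeds $\mathcal B$ into an ultrapower of $\mathcal A$ (expanding $\mathcal A$ at each index by witnesses for the new constants naming $D(\mathcal B)$) rather than the other way around, which is immaterial since the statement is symmetric.
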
 
 \begin{proof}
 One direction is clear, so assume $\mathcal A$ and $ \mathcal B$ are elementary equivalent $L$-structures with underlying sets $A:=D(\mathcal  A)$ and $B:=D(\mathcal  B)$. We want to show that there is an ultrapower of $\mathcal A$ containing $\mathcal B$ as an elementary submodel. Let $b$ be a tuple enumerating all elements of $B$ and let $\sym b$ be a set of new constant symbols of the same size. Let $L(\sym b)$ be the language obtained from $L$ by adding these new constant symbols. Hence the structure $(\mathcal B,b)$ is an $L(\sym b)$-structure by interpreting   $\sym b$ by $b$. 
 Consider the theory $\op{Th}_{L(\sym b)}(\mathcal B,b)$ (aka, the \emph{elementary diagram} of $\mathcal B$) in this language $L(\sym b)$ and let $X$ be the collection of all non-empty finite subsets of this theory.  Any $\Sigma\in X$ is logically equivalent with a sentence of the form $\varphi(\sym b)$ such that  $(\mathcal B,b)\models \varphi(b)$. Since $\mathcal B$  is  then  a model of $ (\exists x)\varphi(x)$,   so is  $\mathcal A$ by elementary equivalence. Therefore   we can find an $L(\sym b)$-expansion $\mathcal A^*_\Sigma:=(\mathcal A, c_\Sigma)$ which models $\Sigma$, where $c_\Sigma$ is a tuple of elements that are the interpretation in $\mathcal A^*_\Sigma$ of $\sym b$. For each   $\Sigma\in X$, let $\langle \Sigma\rangle$ be the set of all $\Psi\in X$ with $\Sigma\sub \Psi$. Since the collection of all $\langle \Sigma\rangle$ for $\Sigma\in X$,  satisfies the finite intersection property, it is contained in some ultrafilter $\mathcal U$ on $X$. Let $\ul{\mathcal A}^*$ be the ultraproduct  of the $\mathcal A^*_\Sigma$ with respect to this ultrafilter. Hence the $L$-reduct $\ul{\mathcal A}$ of $\ul{\mathcal A}^*$ is the ultrapower of $\mathcal A$ with respect to the same ultrafilter. Let $\sigma$ be a sentence in $\op{Th}_{L(\sym b)}(\mathcal B,b)$. For each  $\Sigma\in X$ containing $\sigma$, we have $\mathcal A^*_\Sigma\models\sigma$, and by construction of $\mathcal U$, this therefore holds for almost all $\Sigma$, so that   $\ul{\mathcal A}^*$ satisfies $\sigma$.  Hence $\ul{\mathcal A}$ is the reduct of a model of $\op{Th}_{L(\sym b)}(\mathcal B,b)$, so that by the elementary diagram lemma (see, for instance, \cite[Lemma 8.2.1]{Roth}), $\mathcal B$ is an elementary substructure of $\ul{\mathcal A}$. In fact, the map $\mathcal B\to \ul{\mathcal A}$ is given by sending $b$ to the interpretation of $\sym b$  in the $L(\sym b)$-structure $\ul{\mathcal A}^*$, and it follows easily that this is an elementary embedding.
\end{proof}  
 \begin{remark}\label{R:ulproof}
Note that $X$, the underlying set of the ultrafilter,  has size at most that of the power set of the language $L(\sym b)$.
\end{remark}

\section{Ultrapowers of categories}\label{s:GU}

Recall that a Grothendieck \emph{universe}\footnote{The existence of universes does not follow from ZFC alone; it requires some large cardinal axiom like the existence of inaccessibles.}    is a set $\catset$ with the following properties
\begin{enumerate}
\item\label{i:utrans}$\catset$ is transitive, meaning that if $y\in \catset$ and $x\in y$, then $x\in  \catset$;
\item\label{i:upower} if $y$ belongs to  $\catset$, then so does its power-set $\mathcal P(y)$;
\item\label{i:uunion} if $I\in \catset$ and $x_i\in \catset$ for all $i\in I$, then the union of all $x_i$ is also in $\catset$;
\item\label{i:uomega} $\omega\in \catset$.
\end{enumerate}

Fix, once and for all,  a universe $\catset$.   We view $\catset$ as a subcategory of the category of sets. The following two results show that a Grothendieck universe suffices to `do model-theory'.

\begin{lemma}\label{L:pullbackstr}
Let $\mathcal M$ be an $L$-structure, $X\sub\catset$  and  $s\colon X\to D(\mathcal M)$  a bijection. Then there is a unique $L$-structure $\mathcal X:=s^*\mathcal M$ such that $D(\mathcal X)=X$ and $s$ induces an isomorphism $\mathcal X\to \mathcal M$ of $L$-structures. 
\end{lemma} 
\begin{proof}
We define an $L$-structure on $X$ as follows. If $\sym R$ is a $n$-ary relation symbol (including the case $n=0$ which just means a constant symbol), then $\sym R^{\mathcal X}$ is the subset $\inverse s{\sym R^{\mathcal M}}$, and if $\sym f$ is an $n$-ary  function symbol, then $\sym f^{\mathcal X}$ is just $\inv s\after \sym f^{\mathcal M}\after s$. One easily checks that this yields the required $L$-structure.
\end{proof}

\begin{proposition}[L\"owenheim-Skolem for $\catset$]\label{P:LSGU}
Let $L$ be a first-order language in $\catset$,  let  $\mathcal M$ be an $L$-structure, and let $A\in\catset$ be a subset of $D(\mathcal M)$. Then there exists an elementary embedding $\mathcal N\preceq \mathcal M$  such that $A\sub D(\mathcal N)\in \catset$. 
\end{proposition}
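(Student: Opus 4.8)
The plan is to run the usual downward Löwenheim–Skolem construction, but to be careful that at every stage the sets produced stay inside the universe $\catset$. First I would fix a Skolemization: expand $L$ to a language $L^{\mathrm{sk}}$ by adding, for each $L$-formula $\varphi(\bar x, y)$, a new function symbol $F_\varphi$ of the appropriate arity, and expand $\mathcal M$ to an $L^{\mathrm{sk}}$-structure $\mathcal M^{\mathrm{sk}}$ by choosing, for each tuple $\bar a$, an element $F_\varphi^{\mathcal M^{\mathrm{sk}}}(\bar a)$ witnessing $(\exists y)\varphi(\bar a, y)$ when such a witness exists (and arbitrary otherwise). Here I need to observe that $L^{\mathrm{sk}}$ is still a language in $\catset$: it is built from $L\in\catset$ together with the set of $L$-formulas, and since $L\in\catset$ the set of finite strings over $L$, hence the set of formulas, lies in $\catset$ by the closure properties \eqref{i:utrans}--\eqref{i:uomega} (finite sequences, power sets, countable unions).

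Next I would close $A$ under the Skolem functions. Set $A_0 := A$ and recursively $A_{n+1} := A_n \cup \bigcup\{\, F_\varphi^{\mathcal M^{\mathrm{sk}}}(\bar a) : F_\varphi \in L^{\mathrm{sk}},\ \bar a \text{ a tuple from } A_n \,\}$, then let $N := \bigcup_{n<\omega} A_n$. Each $A_{n+1}$ is a subset of $D(\mathcal M)$, and it belongs to $\catset$: $A_n \in \catset$ by induction, the set of finite tuples from $A_n$ is in $\catset$ (it is a countable union of the sets $A_n^k$, each a subset of the power set of a suitable product, all computable inside $\catset$), and $A_{n+1}$ is the union over an index set in $\catset$ of singletons in $\catset$, hence in $\catset$ by \eqref{i:uunion}. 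Then $N = \bigcup_{n<\omega} A_n \in \catset$ again by \eqref{i:uunion}, using $\omega \in \catset$. By the Tarski–Vaught test, $N$ is the underlying set of an elementary substructure $\mathcal N^{\mathrm{sk}} \preceq \mathcal M^{\mathrm{sk}}$: given $(\exists y)\varphi(\bar a, y)$ with $\bar a$ from $N$ and $\mathcal M \models (\exists y)\varphi(\bar a,y)$, the witness $F_\varphi^{\mathcal M^{\mathrm{sk}}}(\bar a)$ lies in $N$ by construction. Taking the $L$-reduct $\mathcal N$ of $\mathcal N^{\mathrm{sk}}$ gives $\mathcal N \preceq \mathcal M$ with $A \subseteq D(\mathcal N) = N \in \catset$.

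One technical point to get right: $\catset$ is a set of sets, so to even speak of a structure $\mathcal N$ with $D(\mathcal N) = N$ living "in" $\catset$ we should, strictly, first produce $N \subseteq D(\mathcal M)$ as above and then — if $D(\mathcal M)$ itself is not assumed to be a subset of $\catset$ — transport the structure along a bijection of $N$ with an element of $\catset$ using \Lem{L:pullbackstr}. Since $A \in \catset$ already, and $N$ has cardinality at most $\max(|A|,|L|,\omega)$, there is room inside $\catset$ (which is closed under power sets and hence contains sets of every cardinality below its rank) to find such a copy; replacing $\mathcal N$ by its isomorphic copy preserves the elementary embedding into $\mathcal M$.

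The main obstacle is bookkeeping rather than mathematics: verifying that none of the constructions above leaves $\catset$. The only genuinely new ingredient beyond the classical argument is checking closure, and all of it reduces to the four Grothendieck axioms — transitivity, closure under power sets, closure under $\catset$-indexed unions, and $\omega \in \catset$ — applied to the language, the formula set, the tuples, and the $\omega$-chain $A_0 \subseteq A_1 \subseteq \cdots$.
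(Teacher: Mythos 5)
Your proof is correct and, once your closing ``technical point'' is folded in, it is essentially the paper's argument: the paper simply invokes the classical downward L\"owenheim--Skolem theorem to obtain an elementary substructure $\mathcal M'\preceq\mathcal M$ of cardinality $\lambda=\max(|A|,|L|,\aleph_0)<|\catset|$ containing $A$, and then transports it onto a set $X\in\catset$ containing $A$ via \Lem{L:pullbackstr}. Two remarks on your version. First, the in-universe bookkeeping in the Skolem-hull construction overclaims: the witnesses $F_\varphi^{\mathcal M^{\mathrm{sk}}}(\bar a)$ are elements of $D(\mathcal M)$, which is \emph{not} assumed to be a subset of $\catset$, so the singletons $\{F_\varphi^{\mathcal M^{\mathrm{sk}}}(\bar a)\}$ need not lie in $\catset$ and the claim that $A_{n+1}\in\catset$ is unjustified as stated. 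This is not fatal, because your last paragraph supplies exactly the right fix --- all one needs from the hull is the cardinality bound $|N|\le\max(|A|,|L|,\aleph_0)$, after which transporting the structure along a bijection of $N$ with a suitable set in $\catset$ (chosen to fix $A$ pointwise) finishes the proof --- but that step is load-bearing, not a technicality, and should be the main clause of the argument rather than an appendix to it. Second, re-deriving L\"owenheim--Skolem via Skolemization buys you nothing here, since the transport step makes the in-universe verification of the hull moot; citing the classical theorem as a black box, as the paper does, is cleaner and avoids the pitfall above.
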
 
\begin{proof}
Let $\kappa$ be the cardinality of $\catset$, which therefore is bigger than those of $L$ and $A$. By the usual L\"owenheim-Skolem Theorem (see, for instance, \cite[Theorem 8.4.1]{Roth}), there exists an elementary substructure $\mathcal M'\sub\mathcal M$ of cardinality $\lambda<\kappa$ with $A\sub D(\mathcal M')$.  Choose some set $X\in\catset$ containing $A$ and having cardinality $\lambda$. By \Lem{L:pullbackstr}, there then exists an $L$-structure $\mathcal N\iso \mathcal M'$ with $D(\mathcal N)=X$.
\end{proof}

The model $\mathcal N$ thus constructed will be considered small:

\begin{definition}[Smallness]\label{D:small}
Given a first-order language $L\in\catset$, we call an $L$-structure $\mathcal N$ \emph{small}, if its underlying set $D(\mathcal N)$ is an element of $\catset$.
\end{definition} 
This applies in particular to categories, so that in our terminology, a category is small if both its collection of objects and its collection of morphisms  are members of $\catset$.  We will also encounter non-small categories, but even then we will assume that  the collection of objects as well as the collection of morphisms, form a subset of $\catset$ (hence they are still small in the classical sense). 


\begin{definition}\label{D:concGU}
Let us say that a   category $\mathbb C$ is a \emph{$\catset$-category}, if all its hom-sets belong to $\catset$.\footnote{This then corresponds to the classical notion of locally small.} If $\cat C$ admits moreover a generator, then we will say that $\cat C$ is  
\emph\conc. We call a category \emph{sub-\conc}, if it is a subcategory of a \conc\ category.
\end{definition} 

Recall that an object $I$ in a category $\mathbb C$  is a \emph{generator} for that category if for each pair of distinct arrows $f\neq g\colon A\rightrightarrows B$, there is an arrow $a\colon I\to A$, such that $f\after a\neq g\after a$. 

\begin{corollary}\label{C:conccat}
Any (sub)-\conc\ category $\cat C$ embeds in $\catset$.
\end{corollary}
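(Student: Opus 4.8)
The plan is to show that a \conc\ category $\cat C$ embeds faithfully into $\catset$ (viewed as a subcategory of the category of sets) via a representable functor, and then to reduce the sub-\conc\ case to this one. First I would fix a generator $I$ of $\cat C$ and consider the functor $F := \hom{\cat C}I-$ sending an object $A$ to the hom-set $\hom{\cat C}IA$ and a morphism $f\colon A\to B$ to post-composition $f\after(-)\colon \hom{\cat C}IA\to\hom{\cat C}IB$. Since $\cat C$ is a $\catset$-category, each $\hom{\cat C}IA$ is an element of $\catset$, so $F$ lands in $\catset$. The defining property of a generator is exactly the statement that $F$ is faithful: if $f\neq g\colon A\rightrightarrows B$, there is $a\colon I\to A$ with $f\after a\neq g\after a$, i.e. $F(f)$ and $F(g)$ differ on the point $a\in\hom{\cat C}IA$. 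Functoriality (preservation of composition and identities) is immediate from associativity and the identity axioms in $\thcat$.

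There is one gap to close before $F$ is literally an embedding in the sense required (an injective-on-objects, faithful functor): $F$ need not be injective on objects, since two distinct objects can have equal hom-sets out of $I$. I would fix this by the standard trick of tagging: replace $F(A)$ by the set $\{A\}\times\hom{\cat C}IA$ (or, to keep everything inside $\catset$, by the pair coded as an element of $\catset$ using that $\catset$ is transitive, closed under power-sets and under small unions, hence under pairing and finite products), and correspondingly send $f\colon A\to B$ to $(A,a)\mapsto(B,f\after a)$. This modified functor $F'$ is now injective on objects and still faithful, and still takes values in $\catset$; invoking \Lem{L:catstr} it is an $\langcat$-\homo\ from $\cat C$ into $\catset$, i.e. an embedding of categories.

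For the sub-\conc\ case, let $\cat C$ be a subcategory of a \conc\ category $\cat D$. Then the composite of the inclusion $\cat C\into\cat D$ with the embedding $F'\colon\cat D\into\catset$ just constructed is again a faithful, injective-on-objects functor, hence an embedding $\cat C\into\catset$. (Here one should check that the inclusion $\cat C\into\cat D$ is indeed an embedding of $\langcat$-structures — it preserves domains, ranges, identities and composition by definition of subcategory — which is routine.)

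I expect the main obstacle to be purely bookkeeping rather than conceptual: ensuring that all the sets produced — the tagged hom-sets $\{A\}\times\hom{\cat C}IA$ and the collection of these over all objects of $\cat C$ — genuinely lie in $\catset$ and that the resulting structure is honestly a \emph{sub}category of $\catset$ (so that "embeds in $\catset$" is literally true), which is where the Grothendieck universe axioms \eqref{i:utrans}–\eqref{i:uomega} get used. The faithfulness itself is essentially a restatement of the generator property, so no real work is hidden there.
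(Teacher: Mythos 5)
Your proposal is correct and follows essentially the same route as the paper: embed via the representable functor $\hom{\cat C}I-$ at a generator $I$, observe that faithfulness is precisely the generator property, and handle the sub-\conc\ case by composing with the inclusion. The only (cosmetic) difference is in securing injectivity on objects: the paper simply invokes the standing convention that distinct hom-sets are disjoint, whereas you tag each hom-set by its object — a slightly more careful variant that also covers the degenerate case of coinciding (e.g.\ empty) hom-sets, at the cost of the bookkeeping you already flag.
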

\begin{proof}
For $I$ a generator, the functor $\hom{\mathbb C} I-$ is by definition faithful (injective on morphisms); it is also injective on objects as different hom-sets are  disjoint.\footnote{The common convention in category theory.}
\end{proof}  

%

A faithful representable functor can be thought of as some sort of forgetful functor, and that is precisely what we will do. Given a \conc\ category $\mathbb C$, we fix a generator $I$ and we set
\begin{equation}\label{eq:ff}
\set A:=\hom{\mathbb C}IA.
\end{equation} 
Note that generators are not unique (in fact, if $I$ is a generator, then so is any product $I\times A$), and therefore,  this forgetful functor depends on the choice of $I$, but we will not always make this explicit.  Given an arrow $f\colon A\to B$, the map $\set f\colon \set A\to \set B$ is given by composition, that is to say, given $a\in \set A$, so that $a\colon I\to A$, the image of $a$ under $\set f$ is then the morphism $f\after a\in \set B$.

Let $X$ be a set in $\catset$ and $\mathcal U$   an ultrafilter on   $X$ (so that also $\mathcal U\in \catset$);
 we can therefore express this by  simply saying that $\mathcal U$ is an ultrafilter in $\catset$. Note that for a given category $\mathbb C$, viewing it as an $\langcat$-structure, we can then take its ultrapower $\ul{\mathbb C}$ and we have an embedding of categories $\mathbb C\into \ul{\mathbb C}$ via the diagonal embedding. Note that $\ul{\mathbb C}$ is the quotient of the product category $\mathbb C^X$ modulo the equivalence relation given by the ultrafilter $\mathcal U$ on $X$. Hence any object $B$ in the ultrapower is given by a map $x\mapsto B_x$, for $x\in X$ and $B_x$ an object in $\mathbb C$; we express this by saying that the map $x\mapsto B_x$ \emph{represents} $B$.

\begin{lemma}\label{L:isoulGU}
Let $\mathbb C$ be a \conc\ category with generator $I$. The image $\ul I$ of $I$ under the diagonal embedding is a generator for $\ul{\mathbb C}$, and so the latter is in particular again \conc, yielding a forgetful functor $\set-:=\hom{\ul{\mathbb C}}{\ul I}-$. 

Moreover,   for each object $B$ in $\ul{\mathbb C}$, the set $\set B$ is the ultraproduct of the sets $\set{B_x}$, where $x\mapsto B_x$ is a map representing the object $B$.
\end{lemma}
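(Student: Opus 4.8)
The plan is to verify the two assertions in turn, both of which should follow from \los\ applied to the $\langcat$-structure $\ul{\mathbb C}$ together with the definitions of generator and of the forgetful functor $\set-$.

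First I would show that $\ul I$ is a generator for $\ul{\mathbb C}$. The statement ``$I$ is a generator'' is expressible by a single $\langcat$-sentence with parameter $I$, namely $(\forall x,x')$ if $\sym{dom}(x)=\sym{dom}(x')$, $\sym{rng}(x)=\sym{rng}(x')$ and $x\neq x'$, then $(\exists a)$ with $\sym{dom}(a)=I$, $\sym{rng}(a)=\sym{dom}(x)$ and $x\after a\neq x'\after a$. Since $\mathbb C$ satisfies this sentence (with $I$ as parameter), and the diagonal embedding $\mathbb C\into\ul{\mathbb C}$ is elementary, $\ul{\mathbb C}$ satisfies the same sentence with $\ul I$ as parameter; that is exactly the assertion that $\ul I$ is a generator. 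Consequently $\ul{\mathbb C}$ is again \conc, and the forgetful functor $\set-=\hom{\ul{\mathbb C}}{\ul I}-$ is defined as in \eqref{eq:ff}.

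Next I would identify $\set B$ with the ultraproduct of the $\set{B_x}$. Fix a map $x\mapsto B_x$ representing the object $B$ of $\ul{\mathbb C}$. An element of $\set B=\hom{\ul{\mathbb C}}{\ul I}B$ is a morphism $a$ in $\ul{\mathbb C}$ with $\sym{dom}(a)=\ul I$ and $\sym{rng}(a)=B$; since morphisms of $\ul{\mathbb C}$ are equivalence classes of maps $x\mapsto a_x$ with $a_x$ a morphism of $\mathbb C$, and since $\sym{dom}$ and $\sym{rng}$ are computed coordinatewise in the product category $\mathbb C^X$, the conditions $\sym{dom}(a)=\ul I$ and $\sym{rng}(a)=B$ hold in $\ul{\mathbb C}$ iff, for almost all $x$, $\sym{dom}(a_x)=I$ and $\sym{rng}(a_x)=B_x$, i.e.\ $a_x\in\set{B_x}$. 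Thus a representative of an element of $\set B$ may, after modifying it on a small set, be taken to lie in $\prod_x\set{B_x}$, and two such represent the same element of $\set B$ iff they agree on a big set; this is precisely the description of the ultraproduct $\ul{\set B}$ of the sets $\set{B_x}$. Hence the natural map $\ul{\set B}\to\set B$ sending the class of $(a_x)_x$ to the class of the morphism it represents is a well-defined bijection.

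The only mild subtlety — and the step I would be most careful about — is the bookkeeping between the ``outer'' ultraproduct (that of the $\langcat$-structure $\mathbb C^X$ modulo $\mathcal U$, giving $\ul{\mathbb C}$ and its hom-sets) and the ``inner'' ultraproduct of the plain sets $\set{B_x}$: one must check that the hom-set of $\ul{\mathbb C}$, which a priori is carved out of the set of all morphisms of $\ul{\mathbb C}$ by the parameters $\ul I$ and $B$, coincides coordinatewise with the product of the hom-sets, and that no morphisms are lost by the quotient because $\sym{dom}$, $\sym{rng}$ and $\after$ in $\ul{\mathbb C}$ are literally the coordinatewise operations on $\mathbb C^X$ passed to the quotient. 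Once this is spelled out, the identification of $\set B$ with $\ul{\set B}$ is immediate, and it is routine (though we need not carry it out here) to check that under this identification $\set f$ for a morphism $f$ of $\ul{\mathbb C}$ is the ultraproduct of the maps $\set{f_x}$, so that $\set-$ on $\ul{\mathbb C}$ is the ultraproduct of the forgetful functors on $\mathbb C$.
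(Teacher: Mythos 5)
Your proof is correct. For the main claim---the identification of $\set B$ with the ultraproduct of the $\set{B_x}$---you argue exactly as the paper does: a morphism $\ul I\to B$ in $\ul{\mathbb C}$ is the class of a map $x\mapsto a_x$ with $a_x\in\hom{\mathbb C}I{B_x}$ for almost all $x$, and the coordinatewise bookkeeping you flag as the ``mild subtlety'' is precisely what the paper's phrase ``and conversely; it does not depend on any choices made'' is eliding. Where you differ is in the first assertion: you prove that $\ul I$ is a generator directly, by observing that ``$I$ is a generator'' is a single $\langcat$-sentence with parameter $I$ and invoking elementarity of the diagonal embedding (i.e.\ \los), whereas the paper runs the logic in the other direction, deducing the first statement from the second on the grounds that faithfulness of $\set-$ passes from the $\mathbb C$-coordinates to the ultraproduct of hom-sets. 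Both are sound; your \los-based argument is arguably cleaner and self-contained, while the paper's ordering has the small advantage of making no use of the fact that the ultrapower is taken of a single fixed category, which is what lets Remark~\ref{R:isoulGU} generalize the lemma verbatim to ultraproducts of a family $\mathbb C_x$ with varying generators $I_x$ (though your argument also adapts, by applying \los\ to the sentences ``$I_x$ is a generator'' holding in each factor).
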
 
\begin{proof}
The first statement follows from the second, since $\set-$ being faithful  on $\mathbb C$,  it remains so on $\ul{\mathbb C}$.  An element in $\set B=\hom{\ul{\mathbb C}}{\ul I}B$ is the equivalence class of a map  $x\mapsto f_x$, where each $f_x$ is a morphism $I\to B_x$, that is to say, an element in $\set{B_x}$, and  hence the map $x\mapsto \set{f_x}$ yields an element in the ultraproduct of the $\set{B_x}$, and conversely. It is now easy to see that this does not depend on any choices made.
\end{proof} 
\begin{remark}\label{R:isoulGU}
Nowhere in this proof did we use that the category $\mathbb C$ was  the same in the ultrapower, and so, the more general result is that if  $\mathbb C_x$ is a family of \conc\ categories with respective generator $I_x$, indexed by $x\in X$, and $\ul {\mathbb C}$ is their ultraproduct, then the object $\ul I$ of $\ul{\mathbb C}$ given by     the map $x\mapsto I_x$ is a generator for $\ul{\mathbb C}$  and for  each object $B$ in $\ul{\mathbb C}$, the set 
$\set B:=\hom{\ul{\mathbb C}}{\ul I}B$ is the ultraproduct of the sets $\set{B_x}$, where $x\mapsto B_x$ is a map representing $B$. 
\end{remark} 
\begin{remark}\label{R:subisoul}
The proof shows that we may weaken the assumption to $\mathbb C$ being sub-\conc.
\end{remark}

Note that $\catset$ is itself \conc, with generator the one-element set $\{\emptyset\}$. It follows from \Lem{L:isoulGU} that $\ul{\catset}$ is in fact a subcategory of $\catset$ since the ultraproducts also belong to $\catset$. We will see  in the next section that this `universality' is not an isolated phenomenon.

\begin{example}\label{E:freefunct}
If a category $\mathbb C$ admits a forgetful functor $\mathbb C\to \catset$ which has a left adjoint $U\colon \catset\to \mathbb C$, then $U(\{\emptyset\})$ is a generator for $\mathbb C$.   
\end{example} 

\begin{example}\label{E:grpinit}
Given a group (or, more generally, a monoid) $G$, we can associate a category $\mathbb G$ to it, with a single object $\bullet$  the morphisms of which are given by the group $G$, and their composition by the group law. The sole object is trivially a generator and the ``underlying set'' is $\set\bullet=\hom{}\bullet\bullet=G$, showing that the notion of forgetful functor is not just about objects. Clearly, $\ul{\mathbb G}$ is then representing the group (monoid) $\ul G$. 
\end{example} 


\subsection*{Family of generators}
Given a subset of objects $\Sigma$ of a category $\mathbb C$, let us define, for any object $A$, the set (given  by the co-product/disjoint union in $\catset$)
\begin{equation}\label{eq:homfam}
\hom{\mathbb C}\Sigma A:=\coprod_{I\in\Sigma} \hom {\mathbb C} IA.
\end{equation} 
Note that $\Sigma$ is a family of generators (meaning that given distinct $f,g\colon A\rightrightarrows B$, there is at least one $I\in \Sigma$ and a morphism $a\colon I\to A$ such that $f\after a\neq g\after a$) \iff\ the functor  $\hom{\mathbb C}\Sigma-$ is faithful. We can therefore again use this to define a forgetful functor $\set-:=\hom{\mathbb C}\Sigma-$.  Let us call therefore a $\catset$-category $\mathbb C$ \emph{weakly \conc}, if it admits a family of generators.

We can now also prove a version of \Lem{L:isoulGU} for a weakly \conc\ category $\mathbb C$; we leave the proof to the reader.

\begin{lemma}\label{L:isoulfam}
Given a family $\Sigma$ of generators in a weakly \conc\ category  $\mathbb C$ and putting $\set-:=\hom{\mathbb C}\Sigma-$, let $\ul\Sigma$ be  all equivalence classes in $\ul{\mathbb C}$ given by maps $x\mapsto I_x$, with $I_x\in\Sigma$. Then $\ul\Sigma$ is a family of generators for $\ul{\mathbb C}$ and for each object $B$ in $\ul{\mathbb C}$ given by a map $x\mapsto B_x$, the set $\set B:=\hom{\ul{\mathbb C}}{\ul\Sigma}B$ is the ultraproduct of the sets $\set{B_x}$. \qed
\end{lemma}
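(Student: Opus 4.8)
The plan is to prove \Lem{L:isoulfam} by reducing it to \Lem{L:isoulGU} via a single-generator trick, and otherwise to mimic the proof of \Lem{L:isoulGU} almost verbatim. First I would observe that a family of generators $\Sigma$ in a category $\mathbb C$ that admits co-products indexed by $\Sigma$ (which, via the embedding into $\catset$ given by \Cor{C:conccat}, we may arrange, or handle formally) behaves like a single generator after forming an appropriate co-product; but since we do not want to assume $\mathbb C$ has such co-products, it is cleaner to argue directly, treating $\hom{\mathbb C}\Sigma-$ as a literal disjoint union of hom-functors and carrying the index $I\in\Sigma$ along. So the first step is to unwind definitions: an element of $\set B=\hom{\ul{\mathbb C}}{\ul\Sigma}B$ is, by \eqref{eq:homfam}, a pair consisting of an object $\ul I\in\ul\Sigma$ and a morphism $\ul I\to B$ in $\ul{\mathbb C}$.

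Next I would use the description of $\ul{\mathbb C}$ as the ultraproduct $\mathbb C^X/\mathcal U$: an object $\ul I\in\ul\Sigma$ is represented by a map $x\mapsto I_x$ with each $I_x\in\Sigma$, an object $B$ by a map $x\mapsto B_x$, and a morphism $\ul I\to B$ by (the $\mathcal U$-class of) a map $x\mapsto f_x$ where each $f_x\colon I_x\to B_x$ is a morphism in $\mathbb C$, i.e.\ an element of $\hom{\mathbb C}{I_x}{B_x}\subseteq\hom{\mathbb C}\Sigma{B_x}=\set{B_x}$. Conversely, given a $\mathcal U$-class of elements $g_x\in\set{B_x}$, almost all $g_x$ lie in some $\hom{\mathbb C}{I_x}{B_x}$ with $I_x\in\Sigma$; by \'Lo\'s's theorem (or just the ultrafilter pigeonhole on the disjoint-union decomposition), we may choose such $I_x$ coherently on a big set, and the map $x\mapsto I_x$ then represents an object $\ul I\in\ul\Sigma$ with $\ul I\to B$ the class of $x\mapsto g_x$. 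This sets up the bijection $\set B\cong$ (ultraproduct of the $\set{B_x}$), independent of choices exactly as in \Lem{L:isoulGU}. Faithfulness of $\set-$ on $\ul{\mathbb C}$ — i.e.\ that $\ul\Sigma$ is a family of generators — then follows: if $\ul f\neq\ul g\colon A\rightrightarrows B$ in $\ul{\mathbb C}$, then $f_x\neq g_x$ for big-many $x$, each witnessed by some $a_x\colon I_x\to A_x$ with $f_x\after a_x\neq g_x\after a_x$ and $I_x\in\Sigma$; assembling these gives $\ul a\colon\ul I\to A$ in $\ul{\mathbb C}$ with $\ul I\in\ul\Sigma$ separating $\ul f$ and $\ul g$.

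The one genuinely delicate point — the main obstacle — is the bookkeeping in the "conversely" direction: making sure that when a $\mathcal U$-class of elements $g_x\in\set{B_x}=\coprod_{I\in\Sigma}\hom{\mathbb C}I{B_x}$ is given, the resulting choice of $I_x$ really does glue to a well-defined element $\ul I\in\ul\Sigma$, and that two representatives of the same $\mathcal U$-class produce the same $\ul I$ and the same morphism $\ul I\to B$ up to the identifications in $\ul{\mathbb C}$. This is where one must be careful that $\ul\Sigma$ is defined as the set of \emph{all} classes of maps $x\mapsto I_x$ with $I_x\in\Sigma$, so that no coherence beyond "big set" is needed; the disjoint-union structure of $\hom{\mathbb C}\Sigma{-}$ guarantees that the index $I_x$ is recovered from $g_x$ itself, killing any ambiguity. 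Everything else is a routine transcription of the single-generator argument, which is presumably why the authors are content to leave it to the reader.
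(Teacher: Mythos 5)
Your proof is correct and follows exactly the route the paper intends: the lemma is stated with its proof left to the reader as a transcription of the single-generator argument of \Lem{L:isoulGU}, and your careful handling of the disjoint-union bookkeeping (the index $I_x$ being recoverable from $g_x$ itself, and $\ul\Sigma$ consisting of \emph{all} classes of maps $x\mapsto I_x$, so no coherence beyond agreement on a big set is needed) is precisely what that transcription requires. The generator statement then follows from the bijection just as in the paper, by assembling the separating arrows $a_x\colon I_x\to A_x$ into a morphism from an object of $\ul\Sigma$.
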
 
\begin{remark}\label{R:isoulfam}
Let us call the collection $\Sigma$ \emph{locally unique} if for each object $A$, there is a unique $I\in \Sigma$ such that $\hom{\mathbb C}IA$ is non-empty.  
It follows that $\ul\Sigma$ is then also locally unique.
\end{remark}

\section{The category of first-order models}\label{s:univ}

Let $L$ be a first-order language and $T$ an $L$-theory. We assume that $L$ is a member of $\catset$ (meaning that the collection of its constant, function and relation symbols is in $\catset$). Let $\md T$ be the category of small models of $T$ (that is to say all $L$-structures $\mathcal N\models T$  such that the underlying set $D(\mathcal N)\in\catset$), and  with  arbitrary $L$-\homo{s} for morphisms.  If $T$ is the empty theory, we simply write $\md L$, the \emph{category of $L$-structures}. 

\begin{proposition}\label{P:concretestrucGU}
Let $L$ be a first-order language. Then $\md L$ is \conc, and the term algebra $\mathcal T$ in a single variable $x$ is a generator. Moreover, the functors $D$ and 
 $\set -=\hom{\md L}{\mathcal T}-$ are naturally isomorphic.
\end{proposition}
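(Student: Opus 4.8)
The statement to prove is that $\md L$ is $\catset$-concrete, witnessed by the term algebra $\mathcal T$ in one variable $x$ as a generator, and that the forgetful functor $D$ (underlying set) is naturally isomorphic to the representable functor $\set- = \hom{\md L}{\mathcal T}-$. The key is the universal property of the term algebra: for any $L$-structure $\mathcal M$ and any element $a \in D(\mathcal M)$, there is a \emph{unique} $L$-\homo\ $\mathcal T \to \mathcal M$ sending $x$ to $a$. This is the free/forgetful adjunction between $\md L$ and $\catset$, so one could even invoke \Examp{E:freefunct}; but since a direct argument is short, I would give it explicitly.

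First I would recall the construction of $\mathcal T$: its underlying set consists of all $L$-terms in the single variable $x$ (equivalently, all elements of the free algebra on $\{x\}$), with function symbols and constant symbols interpreted syntactically (a constant symbol $\sym c$ as the term $\sym c$, an $n$-ary $\sym F$ sending terms $t_1,\dots,t_n$ to the term $\sym F(t_1,\dots,t_n)$), and with relation symbols interpreted as empty. Since $L \in \catset$ and $\catset$ is closed under the relevant set-theoretic operations (finite sequences, countable unions), $D(\mathcal T) \in \catset$, so $\mathcal T$ is a small model, hence an object of $\md L$. Then for each $L$-structure $\mathcal M$ and each $a \in D(\mathcal M)$, define $\varepsilon_{\mathcal M,a}\colon \mathcal T \to \mathcal M$ by sending a term $t(x)$ to its evaluation $t^{\mathcal M}(a)$; an easy induction on term complexity shows this is an $L$-\homo\ (it commutes with all function and constant symbols, and the condition on relation symbols is vacuous since $\sym R^{\mathcal T} = \emptyset$), and it is the unique \homo\ with $x \mapsto a$, again by induction on terms.

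This immediately gives a bijection $\set{\mathcal M} = \hom{\md L}{\mathcal T}{\mathcal M} \to D(\mathcal M)$, $\varphi \mapsto \varphi(x)$, with inverse $a \mapsto \varepsilon_{\mathcal M,a}$. Faithfulness of $\set-$, i.e. that $\mathcal T$ is a generator, then follows: if $f \neq g\colon \mathcal M \rightrightarrows \mathcal N$, pick $a \in D(\mathcal M)$ with $f(a) \neq g(a)$; then $\varepsilon_{\mathcal M,a}\colon \mathcal T \to \mathcal M$ satisfies $f \after \varepsilon_{\mathcal M,a} \neq g \after \varepsilon_{\mathcal M,a}$ since these send $x$ to $f(a)$ and $g(a)$ respectively. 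This also verifies $\catset$-concreteness via the definition. Finally, naturality of the bijection $\set{\mathcal M} \iso D(\mathcal M)$ in $\mathcal M$ is a routine check: for $h\colon \mathcal M \to \mathcal N$, both $\set h$ followed by ``evaluate at $x$'' and ``evaluate at $x$'' followed by $D(h)$ send $\varphi$ to $h(\varphi(x))$, using that $\set h$ is post-composition by $h$.

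I do not expect any genuine obstacle here; the only point requiring mild care is the set-theoretic bookkeeping that $D(\mathcal T) \in \catset$ (so that $\mathcal T$ is legitimately an object of $\md T$ for $T$ the empty theory), which follows from the universe axioms \eqref{i:upower}--\eqref{i:uomega} together with $L \in \catset$. The substantive content is entirely the universal property of the term algebra, which is classical.
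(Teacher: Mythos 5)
Your proof is correct and takes essentially the same route as the paper's: both rest on the universal property of the term algebra, namely the unique \homo\ $s_a\colon \mathcal T\to\mathcal M$ with $x\mapsto a$, from which faithfulness of $\hom{\md L}{\mathcal T}-$ and its natural isomorphism with $D$ follow at once. Your added care about $D(\mathcal T)\in\catset$ and the explicit naturality check are fine but do not constitute a different approach.
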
 
\begin{proof}
Recall that the \emph{term} algebra $\mathcal T$ consists precisely of all the $L$-terms in the variable $x$, where the constant and function symbols of the language are interpreted by themselves, while the predicate symbols all define  empty sets. If $f,g\colon\mathcal M\rightrightarrows\mathcal N$   are two distinct $L$-\homo, then there is some $a\in D({\mathcal M})$ such that $f(a)\neq g(a)$. There is a unique \homo\ $s_a\colon \mathcal T\to \mathcal M$ sending the variable $x$ to $a$ (and each term $t(x)$ to its evaluation $t(a)$), and it follows that $f\after s_a\neq g\after s_a$.  By the same argument,  $\hom{\md L}{\mathcal T}{\mathcal M}$ is naturally isomorphic to the underlying set $D(\mathcal M)$, by sending a \homo\ $s\colon \mathcal T\to \mathcal M$ to  $s(x)\in D(\mathcal M)$.\footnote{In fact, the general term-algebra construction is a left adjoint of the functor $\set-$.} 
\end{proof} 


\begin{corollary}\label{C:univstrucGU}
Given an ultrafilter $\mathcal U$   on $X\in \catset$,   the ultrapower $\ul {\md L}$ with respect to $\mathcal U$ is isomorphic to a subcategory of $\md L$. The same is true upon replacing $\md L$ by $\md T$ where $T$ is any $L$-theory.

More precisely, there is an embedding of categories $i\colon \ul{\md L}\into \md L$, making the following diagram commute
\begin{equation}\label{eq:univstrucGU}
\xymatrix{
\ul{\md L}\ar[r]^i\ar[dr]_{{\set -}}&\md L\ar[d]^D\\
&\catset
}
\end{equation} 
where $\set M:=\hom{\ul{\md L}}{  { I}}M$.
%
\end{corollary}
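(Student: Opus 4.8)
The plan is to build the embedding $i\colon \ul{\md L}\into \md L$ directly from the forgetful functor on the ultrapower. By \Prop{P:concretestrucGU}, $\md L$ is \conc\ with generator the term algebra $\mathcal T$, and the associated forgetful functor $\set-=\hom{\md L}{\mathcal T}-$ is naturally isomorphic to the underlying-set functor $D$. By \Lem{L:isoulGU}, the diagonal image $\ul{\mathcal T}=I$ of $\mathcal T$ is a generator for $\ul{\md L}$, so $\ul{\md L}$ is again \conc, and for each object $M$ of $\ul{\md L}$ represented by a map $x\mapsto \mathcal M_x$, the set $\set M=\hom{\ul{\md L}}{I}M$ is the ultraproduct of the sets $\set{\mathcal M_x}\iso D(\mathcal M_x)$. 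Thus $\set M$ is canonically (via \Prop{P:concretestrucGU} applied at each $x$) the ultraproduct $\ul{D(\mathcal M_x)}$, which by \los\ carries a natural $L$-structure, and moreover this $L$-structure lies in $\catset$ because ultraproducts of sets in $\catset$ again belong to $\catset$ (axioms \eqref{i:upower} and \eqref{i:uunion} of a universe, as already noted for $\ul{\catset}$ right after \Lem{L:isoulGU}). This gives the object map of $i$: send $M$ to the $L$-structure on $\set M$.

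Next I would define $i$ on morphisms and check functoriality. Given $g\colon M\to N$ in $\ul{\md L}$, represented by $x\mapsto g_x$ with $g_x\colon \mathcal M_x\to \mathcal N_x$ an $L$-\homo, the forgetful functor yields $\set g\colon \set M\to\set N$, which under the identification $\set M=\ul{D(\mathcal M_x)}$ is just the ultraproduct of the maps $D(g_x)$. Since each $g_x$ is an $L$-\homo, \los\ (or rather the fact that ultraproducts of $L$-\homo{s} are $L$-\homo{s}) shows $\set g$ is an $L$-\homo\ between the $L$-structures just constructed; set $i(g):=\set g$. That $i$ preserves identities and composition is immediate from functoriality of $\set-$. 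Faithfulness of $i$ is exactly faithfulness of $\set-$ on $\ul{\md L}$, which holds since $I$ is a generator. Injectivity on objects follows, as in \Cor{C:conccat}, from the convention that distinct hom-sets are disjoint, so distinct objects $M$ yield disjoint underlying sets $\set M$; hence $i$ is an embedding of categories. Commutativity of the triangle \eqref{eq:univstrucGU} is built into the construction: $D(i(M))=\set M$ by definition. Finally, each $i(M)$ is a model of $T$ whenever every $\mathcal M_x$ is, by \los, which handles the parenthetical claim about $\md T$: the same $i$ restricts to an embedding $\ul{\md T}\into \md T$.

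The main point requiring care — the place where one must be slightly attentive rather than merely routine — is the verification that the $L$-structure on $\set M$ obtained from the ultraproduct identification is \emph{independent of the chosen representative} $x\mapsto \mathcal M_x$ for the object $M$, and that it is compatible across morphisms, i.e.\ that the whole assignment is well-defined on equivalence classes. This is where one invokes \Lem{L:isoulGU}'s last clause together with the naturality statement in \Prop{P:concretestrucGU}: the isomorphism $\set{\mathcal M_x}\iso D(\mathcal M_x)$ is natural in $\mathcal M_x$, so it passes to the ultraproduct coherently, and two representatives of $M$ agree on a big set, giving the same $L$-structure by \los. Once this coherence is in place, everything else is formal diagram-chasing and the theorem follows.
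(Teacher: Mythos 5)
Your proposal is correct and follows essentially the same route as the paper: both use \Prop{P:concretestrucGU} and \Lem{L:isoulGU} to identify $\set M$ with the ultraproduct of the $D(\mathcal M_x)$, transport the $L$-structure of the ultraproduct $\ul{\mathcal M}$ onto $\set M$ (the paper does this via \Lem{L:pullbackstr}), derive functoriality from naturality of the isomorphism, and handle $\md T$ by \los. The only cosmetic difference is that you spell out the well-definedness and $\catset$-membership checks that the paper leaves implicit.
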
 
\begin{proof}
Let $M$ be an object of  $\ul{\md L}$ given by a sequence $x\mapsto \mathcal M_x$, for $x\in X$, with each $\mathcal M_x$ an object in $\md L$, that is to say, an $L$-structure.  As in \Prop{P:concretestrucGU}, let $\mathcal T$ be the generator of $\md L$ given by the one-variable term algebra,   and let $  { I}$ be the object in $\ul{\md L}$ given by the constant sequence $x\mapsto \mathcal T$.  

%
 By \Lem{L:isoulGU},  the set $\set M$ is equal to the ultraproduct of the sets $\set{\mathcal M_x}$. Let $\ul{\mathcal M}$ be the ultraproduct of the $\mathcal M_x$. By \Prop{P:concretestrucGU}, there are natural isomorphisms $\set{\mathcal M_x}\iso D(\mathcal M_x)$, and hence, by \los,  a natural isomorphism 
\begin{equation}\label{eq:natiso}
 \set M\iso D(\ul{\mathcal M}).
\end{equation}   
By \Lem{L:pullbackstr} therefore, we can define an $L$-structure  on the set $\set M$ by pulling back the $L$-structure of $\ul{\mathcal M}$ and we let $i(M)$ be this $L$-structure. In particular,  $i( M)\iso \ul{\mathcal M}$. To see that $i$ is functorial, let $N$ be a second object in $\ul{\md L}$ coming from a sequence of $L$-structures $x\mapsto \mathcal N_x$, and let $i(N)$ be the canonical $L$-structure obtained on $\set N$ isomorphic to the  ultraproduct  $\ul{\mathcal N}$ of  the ${\mathcal N_x}$. A morphism $f\colon M\to N$ in $\ul{\md L}$, arises from a sequence of morphisms $f_x\colon \mathcal M_x\to \mathcal N_x$ in $\md L$, which in the ultraproduct yields a morphism of $L$-structures $\ul f\colon \ul{\mathcal  M}\to\ul{ \mathcal N}$. Naturality of \eqref{eq:natiso} now yields a morphism $i(f)\colon i(M) \to i(N)$. Since $\set -$ is an embedding, so is therefore $i$ by \eqref{eq:univstrucGU}.
 

To prove the   statement for arbitrary theories, we cannot just copy the above proof since $\mathcal T$ might not be a model of $T$. However, we can invoke \Rem{R:subisoul}, to wit, with $M$ an object in $\ul{\md T}$,  the $\mathcal M_x$ are now also models of $T$, whence so is their ultraproduct $\ul{\mathcal M}$ by \los. As $i(M)\iso \ul{\mathcal M}$, the embedding $i$ then factors through $\md T$, as desired. 
%
%
\end{proof} 
\begin{remark}\label{R:univstrucGU}
Note that embedding $i\colon\ul{\md L}\to \md L$ is not full, as the morphisms in the former are   coming from ultraproducts. Pre-composing with the diagonal embedding $\md L\into \ul{\md L}$, the ensuing endofunctor on $\md L$  sends, up to a natural isomorphism,  an $L$-structure $\mathcal M$ to its ultrapower $\ul{\mathcal M}$, and \homo{s} to their ultrapower. In general this is not an elementary map, whence neither is $i$. Indeed, even with $L=\emptyset$, i.e., for the category of sets, this does not hold: the sentence expressing that there is a bijection between $\omega$ and $2^\omega$ is clearly false downstairs, but it is true upstairs since $\ul{(2^\omega)}\iso\ul\omega$. In particular, the embedding $i$ might even fail to reflect isomorphisms.
%
\end{remark} 

\subsection*{Strong \homo{s}}\label{s:strhomo}
The category  $\md L$ contains the  (non-full) subcategory   $\mds L$, with the same objects, the $L$-structures, but where the \homo{s} are now assumed to be strong, meaning  that for any given relation symbol $\sym R$, a tuple satisfies $\sym R$ \iff\ its image under the morphism satisfies $\sym R$. Of course, if the language has no relation symbols (like, for instance, $\langcat$), then $\md L=\mds L$. In general, let us write $\langfunc$ for the restriction of $L$ obtained by omitting all predicate symbols (of positive arity). 

\begin{lemma}\label{L:pullbackstrucGU}
Let $\mathcal N$ be an $\langfunc$-structure and $\mathcal M$  an $L$-structure. If  $f\colon \mathcal N\to \restrict{\mathcal M}\langfunc$ is an   $\langfunc$-\homo, then it induces an $L$-structure on $\mathcal N$, denoted $f^*\mathcal M$, such that $f\colon f^*\mathcal M\to \mathcal M$ is a strong $L$-\homo.  Moreover, $f^*\mathcal M$ is the only expansion of $\mathcal N$ for which $f$ becomes strong.
\end{lemma}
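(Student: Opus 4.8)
The plan is to reverse-engineer the $L$-structure on $\mathcal N$ from the requirement that $f$ be a strong $\langfunc$-homomorphism of $L$-structures. First I would observe that since $L$ and $\langfunc$ have the same constant and function symbols, the underlying $\langfunc$-structure of any expansion of $\mathcal N$ must be $\mathcal N$ itself; hence the only freedom lies in choosing, for each relation symbol $\sym R$ of $L$ of arity $n$, a subset $\sym R^{f^*\mathcal M}\sub D(\mathcal N)^n$. Being a homomorphism of $L$-structures forces $\sym R^{f^*\mathcal M}\sub f^{-1}(\sym R^{\mathcal M})$ (the image of an $\sym R$-tuple must satisfy $\sym R$), while being \emph{strong} forces the reverse inclusion: if $f(a_1),\dots,f(a_n)$ satisfy $\sym R$ in $\mathcal M$, then $(a_1,\dots,a_n)$ must satisfy $\sym R$ in $f^*\mathcal M$. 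So there is at most one possible choice, namely
\[
\sym R^{f^*\mathcal M}:=f^{-1}(\sym R^{\mathcal M})=\{(a_1,\dots,a_n)\in D(\mathcal N)^n : (f(a_1),\dots,f(a_n))\in \sym R^{\mathcal M}\}.
\]

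The second step is to check that this choice actually works, i.e.\ that with these interpretations $f$ really is a strong $L$-homomorphism $f^*\mathcal M\to\mathcal M$. The function-and-constant part of the homomorphism condition is inherited verbatim from the hypothesis that $f\colon\mathcal N\to\restrict{\mathcal M}{\langfunc}$ is an $\langfunc$-homomorphism, so nothing new is needed there. For the relational part, strongness is now immediate by construction: for every relation symbol $\sym R$ and every tuple $a$ in $D(\mathcal N)$, we have $a\in\sym R^{f^*\mathcal M}$ if and only if $f(a)\in\sym R^{\mathcal M}$, which is exactly the defining condition of a strong homomorphism (and in particular it is a homomorphism). This also gives existence.

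Finally, uniqueness: if $\mathcal N'$ is any expansion of $\mathcal N$ to an $L$-structure making $f$ strong, then applying the strongness condition to each relation symbol $\sym R$ gives $\sym R^{\mathcal N'}=f^{-1}(\sym R^{\mathcal M})=\sym R^{f^*\mathcal M}$, and the function and constant interpretations agree because $\mathcal N'$ and $f^*\mathcal M$ are both expansions of the same $\langfunc$-structure $\mathcal N$; hence $\mathcal N'=f^*\mathcal M$. I do not expect any real obstacle here: the lemma is a bookkeeping statement and the entire content is that strongness pins down the relation symbols uniquely as preimages. The only point requiring a modicum of care is keeping straight that "homomorphism of $L$-structures" already imposes the inclusion $\sym R^{f^*\mathcal M}\sub f^{-1}(\sym R^{\mathcal M})$, so that strongness is genuinely an equality and not merely the opposite inclusion; once that is noted, both existence and uniqueness drop out of the single displayed formula above. (This also parallels the pull-back construction of \Lem{L:pullbackstr}, the difference being that there $f$ was a bijection and so the relational part was automatically strong.)
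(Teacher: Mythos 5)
Your proof is correct and follows exactly the same route as the paper's: define $\sym R^{f^*\mathcal M}$ as the preimage $f^{-1}(\sym R^{\mathcal M})$ for each relation symbol, observe that strongness holds by construction, and note that uniqueness is forced by the definition of a strong homomorphism. Your write-up simply spells out in more detail the bookkeeping that the paper leaves implicit.
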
 
\begin{proof}
Let $\sym R$ be an $n$-ary relation symbol and $P\sub D({\mathcal M})^n$ the subset it defines in $\mathcal M$. Then we let the interpretation of $\sym R$ on $D({\mathcal N})$   be the subset $\inverse{f}{P}$ (where we continue to write $f$ for the $n$-fold product $f\colon D({\mathcal N})^n\to D({\mathcal M})^n$). By construction, $f$ is then a strong $L$-\homo. The last assertion is now clear from the definition of strong \homo.
\end{proof} 

As before, let $\mathcal T$ be the one-variable term algebra. We cannot expand it into an $L$-structure which would work for all $L$-structures, so we instead consider the collection   $\Theta$   of all $L$-structures whose reduct to $\langfunc$ is $\mathcal T$ (clearly $\Theta\in\catset$). 

\begin{lemma}\label{L:genstrucGU}
The set $\Theta$ forms a family of generators of $\mds L$. This family is moreover locally unique.
\end{lemma}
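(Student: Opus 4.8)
The plan is to verify the two assertions separately, both following closely the template of \Prop{P:concretestrucGU}. First I would show that $\Theta$ is a family of generators for $\mds L$. So let $f,g\colon \mathcal M\rightrightarrows\mathcal N$ be two distinct strong $L$-\homo{s}; then there is some $a\in D(\mathcal M)$ with $f(a)\neq g(a)$. As in the proof of \Prop{P:concretestrucGU}, there is a unique $\langfunc$-\homo\ $s_a\colon\mathcal T\to\restrict{\mathcal M}{\langfunc}$ sending the variable $x$ to $a$. The key point is that this $s_a$ can be upgraded to a strong $L$-\homo\ with a suitable source in $\Theta$: by \Lem{L:pullbackstrucGU}, the pullback $s_a^*\mathcal M$ is an $L$-structure whose $\langfunc$-reduct is $\mathcal T$ — hence $s_a^*\mathcal M\in\Theta$ — and for which $s_a\colon s_a^*\mathcal M\to\mathcal M$ is a \emph{strong} $L$-\homo. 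Thus $s_a$ is a morphism in $\mds L$ from an object of $\Theta$, and $f\after s_a\neq g\after s_a$ since they differ on $x$. This exhibits the required generator in $\Theta$.

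For local uniqueness, I must show that for each $L$-structure $\mathcal M$, there is exactly one $I\in\Theta$ with $\hom{\mds L}{I}{\mathcal M}$ non-empty. Existence of at least one such $I$ was just established (take $I=s_a^*\mathcal M$ for any choice of base point, or more simply $I=s^*\mathcal M$ for the unique $s\colon\mathcal T\to\restrict{\mathcal M}{\langfunc}$ with $s(x)$ any fixed element; indeed any strong \homo\ out of some $I\in\Theta$ witnesses this). For uniqueness, suppose $h\colon I\to\mathcal M$ is a strong $L$-\homo\ with $I\in\Theta$. Then $h$ is in particular an $\langfunc$-\homo\ $\mathcal T=\restrict{I}{\langfunc}\to\restrict{\mathcal M}{\langfunc}$, and by the universal property of the term algebra it is the unique such \homo\ $s_a$ determined by $a:=h(x)$. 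By the last clause of \Lem{L:pullbackstrucGU}, $f^*\mathcal M$ is the \emph{only} expansion of $\mathcal T$ (as an $\langfunc$-structure) for which this fixed $\langfunc$-\homo\ becomes strong; hence $I=s_a^*\mathcal M$ is completely determined by $a$. So the only ambiguity is in the choice of $a=h(x)\in D(\mathcal M)$ — but different choices of $a$ could a priori give different $I$'s, which would \emph{contradict} local uniqueness unless all of them coincide.

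The main obstacle, then, is precisely this last point, and it is where I expect the real content to lie: I must check that $s_a^*\mathcal M$ does \emph{not} depend on the choice of $a\in D(\mathcal M)$. This should follow because the interpretation of a relation symbol $\sym R$ in $s_a^*\mathcal M$ is $\inverse{(s_a)}{\sym R^{\mathcal M}}$ where $s_a$ acts on terms by $t(x)\mapsto t^{\mathcal M}(a)$; one has to see that pulling back along $t\mapsto t^{\mathcal M}(a)$ versus $t\mapsto t^{\mathcal M}(a')$ yields the same subset of $\mathcal T^n$. The cleanest route is probably to observe that the underlying set of $\mathcal T$ is the \emph{free} object, so that a tuple $(t_1(x),\dots,t_n(x))$ lies in $\sym R^{s_a^*\mathcal M}$ iff $(t_1^{\mathcal M}(a),\dots,t_n^{\mathcal M}(a))\in\sym R^{\mathcal M}$; if this genuinely varies with $a$ then $\Theta$ is not locally unique and the lemma is false as stated, so I would look carefully at whether the intended reading of $\Theta$ restricts to term algebras with trivial predicates only, or whether the lemma should instead assert that $\ul\Theta$ inherits whatever local-uniqueness-type property $\Theta$ has (cf.\ \Rem{R:isoulfam}). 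Modulo settling this dependence question, the remaining verifications — that the witnessing morphisms are genuinely morphisms of $\mds L$, and that local uniqueness then holds — are routine and follow the pattern already used in \Prop{P:concretestrucGU} and \Lem{L:pullbackstrucGU}.
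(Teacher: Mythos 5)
Your first paragraph is exactly the paper's argument: pass to the $\langfunc$-reducts, use \Prop{P:concretestrucGU} to produce a separating $\langfunc$-\homo\ $a\colon\mathcal T\to\restrict{\mathcal M}{\langfunc}$, and pull back along $a$ via \Lem{L:pullbackstrucGU} to obtain $a^*\mathcal M\in\Theta$ together with a strong \homo\ into $\mathcal M$ that still separates $f$ and $g$. That half is complete and needs no change.

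Your hesitation about the second clause is justified, and you have put your finger on the exact weak point. The paper's own proof of local uniqueness argues that ``uniqueness guarantees that $\hom{\mds L}{\mathcal J'}{\mathcal M}$ is empty for any $\mathcal J'\in\Theta$ different from $\mathcal J=a^*\mathcal M$,'' but the uniqueness clause of \Lem{L:pullbackstrucGU} only says that $a^*\mathcal M$ is the unique expansion making the \emph{fixed} map $a$ strong; it does not exclude strong \homo{s} out of other members of $\Theta$ with a different underlying map. And indeed $s_a^*\mathcal M$ does depend on the base point: take $L$ to consist of a single unary predicate $\sym R$ (so $\langfunc=\emptyset$ and $\mathcal T=\{x\}$) and let $\mathcal M$ be a two-element structure in which exactly one element satisfies $\sym R$; then both expansions of $\{x\}$ lie in $\Theta$ and both admit a strong \homo\ into $\mathcal M$. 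So $\Theta$ is \emph{not} locally unique in the sense of \Rem{R:isoulfam}, and the second assertion of the lemma fails as stated; this is a defect of the source, not of your write-up. What does survive --- and is all that is actually used later, in \Prop{P:setstrucGU} --- is precisely the pointwise statement you isolate at the end: for each $y\in D(\mathcal M)$ there is exactly one $I\in\Theta$ admitting a strong \homo\ $I\to\mathcal M$ sending $x$ to $y$, namely $I=a_y^*\mathcal M$, so that $h\mapsto h(x)$ gives a bijection $\hom{\mds L}{\Theta}{\mathcal M}\to D(\mathcal M)$. If you replace the local-uniqueness clause by that statement (and check that \Lem{L:isoulfam} and \Cor{C:univstrucGUs} only need this weaker form, which they do), your argument closes.
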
 
\begin{proof}
Let $f,g\colon\mathcal M\rightrightarrows \mathcal N$ be two distinct  (strong) \homo{s} of $L$-structures. Their  reducts are $\langfunc$-structures and so by \Prop{P:concretestrucGU}, there exists a  morphism $a\colon \mathcal T\to \restrict{\mathcal M}\langfunc$ such that $f\after a\neq a\after g$. With $\mathcal J:=a^*\mathcal M$---whence $\mathcal J\in \Theta$---, we get the desired  (strong) \homo\ $a\colon \mathcal J\to \mathcal M$ by \Lem{L:pullbackstrucGU}. Moreover, uniqueness now guarantees that $\hom{\mds L}{\mathcal J'}{\mathcal M}$ is empty, for any $\mathcal J'\in\Theta$ different from $\mathcal J=a^*\mathcal M$, showing that the family is locally unique.
\end{proof} 


\begin{proposition}\label{P:setstrucGU}
With $\Theta$ defined as above, there is a natural isomorphism of functors, such that for each small $L$-structure  $\mathcal M$, we have 
\begin{equation}\label{eq:setstruc}
D({\mathcal M})\iso \hom{\mds L}{\Theta}{\mathcal M}.
\end{equation}  
\end{proposition}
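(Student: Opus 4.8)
The statement to establish is the natural isomorphism $D(\mathcal M)\iso \hom{\mds L}{\Theta}{\mathcal M}$ for every small $L$-structure $\mathcal M$, where $\hom{\mds L}{\Theta}{\mathcal M}=\coprod_{\mathcal J\in\Theta}\hom{\mds L}{\mathcal J}{\mathcal M}$ as in \eqref{eq:homfam}. The plan is to unwind both sides and exhibit mutually inverse maps, using \Lem{L:genstrucGU} (which already shows $\Theta$ is a locally unique family of generators) together with \Lem{L:pullbackstrucGU} and \Prop{P:concretestrucGU}.

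First I would fix $\mathcal M$ and describe the forward map $\hom{\mds L}{\Theta}{\mathcal M}\to D(\mathcal M)$. An element of the left-hand side is a pair $(\mathcal J,s)$ with $\mathcal J\in\Theta$ and $s\colon\mathcal J\to\mathcal M$ a strong $L$-\homo. Forgetting relation symbols, $s$ is in particular an $\langfunc$-\homo\ $\mathcal T=\restrict{\mathcal J}\langfunc\to\restrict{\mathcal M}\langfunc$, and by \Prop{P:concretestrucGU} such an $\langfunc$-\homo\ is determined by, and determines, the element $s(x)\in D(\mathcal M)$; so I send $(\mathcal J,s)\mapsto s(x)$. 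Conversely, given $a\in D(\mathcal M)$, \Prop{P:concretestrucGU} gives the unique $\langfunc$-\homo\ $s_a\colon\mathcal T\to\restrict{\mathcal M}\langfunc$ with $s_a(x)=a$, and \Lem{L:pullbackstrucGU} then produces the unique expansion $\mathcal J_a:=s_a^*\mathcal M\in\Theta$ making $s_a$ into a strong $L$-\homo\ $\mathcal J_a\to\mathcal M$; I send $a\mapsto(\mathcal J_a,s_a)$. The composite $a\mapsto(\mathcal J_a,s_a)\mapsto s_a(x)=a$ is the identity on $D(\mathcal M)$ immediately. For the other composite I would argue that if $(\mathcal J,s)$ is given, then $s=s_{s(x)}$ as $\langfunc$-\homo{s} (by uniqueness in \Prop{P:concretestrucGU}), hence $\mathcal J$ and $\mathcal J_{s(x)}=s^*\mathcal M$ are both expansions of $\mathcal T$ to an $L$-structure for which $s$ is strong, so they coincide by the uniqueness clause of \Lem{L:pullbackstrucGU}; thus $(\mathcal J,s)=(\mathcal J_{s(x)},s_{s(x)})$. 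This gives the bijection for each $\mathcal M$.

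Next I would check naturality: given an $L$-\homo\ $h\colon\mathcal M\to\mathcal M'$ (a morphism in $\mds L$, so $h$ is strong), postcomposition $\hom{\mds L}{\Theta}{\mathcal M}\to\hom{\mds L}{\Theta}{\mathcal M'}$ sends $(\mathcal J,s)$ to $(\mathcal J,h\after s)$ (note $h\after s$ is again strong as a composite of strong \homo{s}), while $D(h)$ sends $s(x)$ to $h(s(x))=(h\after s)(x)$; so the square commutes on the nose. Here I should remark, as in the corresponding \Rem{R:lim}-type caveat, that the composite $h\after s$ being a morphism of $\mds L$ is exactly why we need $h$ strong — this is the only place strongness of the ambient morphism enters, and it is why the statement is for $\mds L$ and not $\md L$.

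The main obstacle, such as it is, is bookkeeping rather than mathematics: one must be careful that $\hom{\mds L}{\Theta}{\mathcal M}$ really is a \emph{disjoint} union, i.e. that for a fixed strong $s\colon\to\mathcal M$ there is genuinely a unique $\mathcal J\in\Theta$ hosting it — but this is precisely the local uniqueness established in \Lem{L:genstrucGU}, so no new work is needed. Thus the proof is essentially the observation that \Prop{P:concretestrucGU} handles the $\langfunc$-part and \Lem{L:pullbackstrucGU} reconstructs the relations uniquely, and these two bijections splice together over the index set $\Theta$.
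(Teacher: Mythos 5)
Your proof is correct and follows essentially the same route as the paper's: both directions of the bijection are built from the unique $\langfunc$-\homo\ out of the term algebra (\Prop{P:concretestrucGU}) together with the unique strong expansion (\Lem{L:pullbackstrucGU}, equivalently the local uniqueness of $\Theta$ from \Lem{L:genstrucGU}). The only addition is your explicit naturality check, which the paper leaves implicit.
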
 
\begin{proof}
Let $y\in D({\mathcal M})$. There is a unique $\langfunc$-\homo\ $a_y\colon \mathcal T\to \restrict{\mathcal M}\langfunc$ sending the free variable to $y$. By the proof of \Lem{L:genstrucGU}, the object $a_y^*\mathcal M$ is therefore the unique structure in $\Theta$ such that $a_y\colon a_y^*\mathcal M\to \mathcal M$ is strong.

Conversely, suppose $\mathcal J\in \Theta$ and we have a strong \homo\ $a\colon \mathcal J\to \mathcal M$.  The induced map $a$ from $D( {\mathcal J})=D({\mathcal T})$ to $D({\mathcal M})$ sends the free variable to an element $y\in D({\mathcal M})$. Since $\Theta$ is locally unique, we must have $a=a_y$, proving that this assignment is the inverse of the previous one.
\end{proof} 

\begin{corollary}\label{C:univstrucGUs}
Let $L$ be a first-order language and $\mathcal U$ an ultrafilter, both in $\catset$.  Given an $L$-theory $T$, the ultrapower $\ul {\mds T}$ with respect to $\mathcal U$ is isomorphic to a subcategory of $\mds T$. 
\end{corollary}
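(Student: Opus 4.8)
The plan is to mimic the proof of \Cor{C:univstrucGU}, replacing the single generator $\mathcal T$ by the family of generators $\Theta$ provided by \Lem{L:genstrucGU}, and replacing \Lem{L:isoulGU} by its family-version \Lem{L:isoulfam}. First I would let $M$ be an object of $\ul{\mds T}$, represented by a sequence $x\mapsto \mathcal M_x$ with each $\mathcal M_x$ a (small) model of $T$, and form the genuine ultraproduct $\ul{\mathcal M}$ of these $L$-structures; by \los\ it is again a model of $T$. Since $\langcat$ is irrelevant here---$\mds T$ lives in the language $L$ with its relation symbols---I would take $\ul\Theta$ to be the family of equivalence classes of sequences $x\mapsto \mathcal J_x$ with $\mathcal J_x\in\Theta$; by \Lem{L:isoulfam} this is a (locally unique, by \Rem{R:isoulfam}) family of generators for $\ul{\mds T}$, and $\set M:=\hom{\ul{\mds T}}{\ul\Theta}M$ is the ultraproduct of the sets $\set{\mathcal M_x}$.

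Next I would invoke \Prop{P:setstrucGU}, which gives natural isomorphisms $D(\mathcal M_x)\iso\set{\mathcal M_x}$, and combine these with \los\ to obtain a natural isomorphism $\set M\iso D(\ul{\mathcal M})$. Using \Lem{L:pullbackstr} (or rather \Lem{L:pullbackstrucGU} applied with the identity-type identification), I would transport the $L$-structure of $\ul{\mathcal M}$ onto the set $\set M$ and define $i(M)$ to be the resulting $L$-structure, so that $i(M)\iso\ul{\mathcal M}$ and in particular $i(M)\models T$. Functoriality is checked exactly as in \Cor{C:univstrucGU}: a morphism $f\colon M\to N$ in $\ul{\mds T}$ comes from a sequence of strong $L$-\homo{s} $f_x\colon\mathcal M_x\to\mathcal N_x$, whose ultraproduct $\ul f\colon\ul{\mathcal M}\to\ul{\mathcal N}$ is again strong (being strong is a first-order property of the graph of $f$ relative to each relation symbol, so it passes through ultraproducts by \los), and naturality of the isomorphism $\set M\iso D(\ul{\mathcal M})$ turns $\ul f$ into a strong \homo\ $i(f)\colon i(M)\to i(N)$. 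Since $\set-$ is faithful on $\ul{\mds T}$ and factors through $i$, the functor $i$ is an embedding.

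The one point that genuinely needs care---and which I expect to be the main obstacle---is the compatibility of the strong-\homo\ condition with the family-of-generators bookkeeping: I must make sure that when I pull back the structure of $\ul{\mathcal M}$ along the isomorphism $\set M\iso D(\ul{\mathcal M})$, the object $i(M)$ I obtain actually lies in the image of $i$ in a way compatible with the local uniqueness of $\ul\Theta$, and that $i(f)$ is strong rather than merely a plain $\langfunc$-\homo. This is handled by observing that strongness, unlike in \Rem{R:lim} for co-equalizers, is preserved under ultraproducts (for each $n$-ary relation symbol $\sym R$, the biconditional ``$\bar a\in\sym R$ iff $f(\bar a)\in\sym R$'' is first-order), so $\ul f$ is strong and \Lem{L:pullbackstrucGU} applies verbatim. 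With this in hand the rest is a routine transcription of the proof of \Cor{C:univstrucGU}, now using \Lem{L:isoulfam} and \Prop{P:setstrucGU} in place of \Lem{L:isoulGU} and \Prop{P:concretestrucGU}.
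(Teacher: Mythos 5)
Your proposal is correct and follows essentially the same route as the paper: the paper's own proof likewise reduces to establishing the family-of-generators analogue of \Lem{L:isoulGU} via $\ul\Theta$ and \Lem{L:isoulfam}, endows $\set M$ with the $L$-structure of the ultraproduct $\ul{\mathcal M}$, and then declares that the rest goes through as in \Cor{C:univstrucGU}. Your explicit remarks on \Prop{P:setstrucGU}, local uniqueness, and the preservation of strongness under ultraproducts by \los\ are exactly the details the paper leaves implicit.
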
 
\begin{proof}
We can copy the proof of \Cor{C:univstrucGU}, once we prove the equivalent statement of \Lem{L:isoulGU} in this setup. To this end, we may take $T$ to be the empty theory. Therefore, let $M$ be an object in $\ul{\mds L}$, given by a map $x\mapsto \mathcal M_x$, for $x\in X$, with each $\mathcal M_x$ an object in $\mds L$. Let $\ul{\Theta}$ be the image of the set $\Theta$ in $\ul{\mds L}$, that is to say,  all equivalence classes of maps  $X\to \Theta$. Define
\[
\set M:=\hom{\ul{\mds L}}{\ul{\Theta}}M.
\]
By \Lem{L:isoulfam}, the set  $\set M$, being an ultraproduct of $L$-structures, inherits a canonical  $L$-structure $\mathcal M$, and the rest of the proof now goes through as in \Cor{C:univstrucGU}.
%
\end{proof}

\section{Universal categories}

Let $\mathcal M$ be an $L$-structure. We call $\mathcal M$ \emph{universal} (respectively, \emph{strongly universal}), if  every $L$-structure $\mathcal N$ which is elementarily equivalent with $\mathcal M$ and has cardinality less than $\mathcal N$, embeds (respectively, elementarily embeds) in $\mathcal M$.

Any countable structure is trivially universal according to this definition, so we exclude this case. 

\begin{example}\label{E:univACF}
Any uncountable \acf\ $K$ is strongly universal (in the language of rings). This is just a special case of:
\end{example} 


\begin{proposition}\label{P:univcatth}
Let $T$ be a complete, uncountable categorical $L$-theory and $\mathcal M$  a model   of $T$  of uncountable cardinality $\lambda$. If $\lambda$ is at least the size of the language $L$, then $\mathcal M$   is strongly universal.
\end{proposition}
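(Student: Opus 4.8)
The plan is to reduce this to standard uncountably categorical model theory. First I would recall the setup: $T$ is complete and $\kappa$-categorical for every uncountable $\kappa$ (this is what ``uncountable categorical'' means here — by Morley's theorem it suffices to assume it for one uncountable cardinal, but we will just use what we need), and $\mathcal{M} \models T$ has cardinality $\lambda > \aleph_0$ with $\lambda \geq |L|$. We want: any $\mathcal{N} \equiv \mathcal{M}$ with $|\mathcal{N}| < \lambda$ embeds (in fact elementarily embeds) into $\mathcal{M}$. Since $T$ is complete, $\mathcal{N} \equiv \mathcal{M}$ is the same as $\mathcal{N} \models T$, so we must embed an arbitrary model of $T$ of size $< \lambda$ elementarily into $\mathcal{M}$.

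The key steps, in order. (1) Handle the case $|\mathcal{N}| \geq \aleph_1$ first, or more simply just note $|\mathcal{N}| \leq \lambda$ in all cases and proceed uniformly. Let $\mu = \max(|\mathcal{N}|, |L|, \aleph_1) \leq \lambda$. (2) By the upward Löwenheim–Skolem theorem, take an elementary extension $\mathcal{N} \preceq \mathcal{N}'$ with $|\mathcal{N}'| = \lambda$. (3) Since $T$ is $\lambda$-categorical and both $\mathcal{M}$ and $\mathcal{N}'$ are models of the complete theory $T$ of size exactly $\lambda$, we have $\mathcal{N}' \cong \mathcal{M}$. (4) Composing, we get an elementary embedding $\mathcal{N} \preceq \mathcal{N}' \cong \mathcal{M}$, which is the desired (strong) embedding. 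Strong universality — i.e., the embedding being elementary — is automatic here since elementary extensions compose with isomorphisms. That establishes \Prop{P:univcatth}, and \Examp{E:univACF} then follows since $\mathrm{ACF}_p$ and $\mathrm{ACF}_0$ are uncountably categorical of language size $\aleph_0$, so any uncountable algebraically closed field has cardinality $\lambda \geq \aleph_0 = |L_{\text{ring}}|$.

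One technical point to get right: in step (2) we need $|\mathcal{N}'| = \lambda$ exactly, and upward Löwenheim–Skolem gives, for any cardinal $\nu \geq \max(|\mathcal{N}|, |L|)$, an elementary extension of size $\nu$; taking $\nu = \lambda$ works precisely because $\lambda \geq |L|$ and $\lambda \geq |\mathcal{N}|$ (the latter being part of, or following from, the cardinality hypotheses — if $|\mathcal{N}| < \lambda$ this is immediate, and $|\mathcal{N}| = \lambda$ needs no extension step at all, just categoricity directly). The hypothesis ``$\lambda$ at least the size of the language'' is exactly what makes the Löwenheim–Skolem step legal; without it one cannot control the size of the elementary extension. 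There is really no serious obstacle — this is a soft application of Morley-style categoricity. The only thing to be careful about is the degenerate/small-cardinality bookkeeping and making sure ``embeds'' in the statement is being read as the honest elementary embedding that the proof produces (so the ``respectively, strongly universal'' clause is what actually gets proved).

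I expect no genuine difficulty; if anything, the ``main obstacle'' is purely expository: stating cleanly that for a complete theory, elementary equivalence to $\mathcal{M}$ coincides with being a model of $T$, and that uncountable categoricity is used only at the single cardinal $\lambda$. One should perhaps also remark that countable models of $T$ are excluded by the standing convention (``Any countable structure is trivially universal ... so we exclude this case''), so the statement is only asserting something for $\mathcal{N}$ of cardinality strictly between $\aleph_0$ and $\lambda$, or equal to $\lambda$ — but the argument above covers all $|\mathcal{N}| \leq \lambda$ uniformly, which is a cleaner way to phrase it.
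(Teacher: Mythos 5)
Your proof is correct and is essentially identical to the paper's: both take an elementary extension $\mathcal N'$ of $\mathcal N$ of cardinality $\lambda$ via upward L\"owenheim--Skolem (which is where the hypothesis $\lambda\geq|L|$ is used) and then invoke $\lambda$-categoricity to conclude $\mathcal N'\iso\mathcal M$, giving an elementary embedding $\mathcal N\preceq\mathcal N'\iso\mathcal M$. The extra bookkeeping you supply (completeness identifying elementary equivalence with being a model of $T$, and the degenerate cases) is sound but not a departure from the paper's argument.
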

\begin{proof}
Let $\mathcal N$ be a model of cardinality   $\beta<\lambda$. By upward L\"owenheim-Skolem, there exists an elementary extension $\mathcal N'$ of $\mathcal N$ of size $\lambda$ (see, for instance,  \cite[Theorem 8.4.3]{Roth}), and by categoricity, the former must be isomorphic to $\mathcal M$.
\end{proof}  


Rather than working with cardinalities,  the following definition in terms of Grothendieck universes is more suitable for our needs. Recall that $D(\mathcal M)$ denotes the underlying set of an $L$-structure and \emph{small} means that this set belongs to $\catset$.

\begin{definition}\label{D:univ}
An $L$-structure $\mathcal M$ is called \emph{universal}, if $D(\mathcal M)\sub\catset$ and  for any small $L$-structure $\mathcal N$  elementarily equivalent with it, there is an embedding $\mathcal N\into \mathcal M$.
\end{definition} 

For cardinality reasons, the underlying subset of a universal structure can never be an element of $\catset$, only a subset. Universality implies that  small substructures form     an elementary class:

\begin{proposition}\label{P:univth}
Let $\mathcal M$  be a universal structure in a first-order language $L$,  let $T$ be its complete $L$-theory, and let  $\mathcal N$ be a small $L$-structure. Then  $\mathcal N$ is (isomorphic to) a substructure of $\mathcal M$ \iff\ it is a model of $T_\forall$. 
\end{proposition}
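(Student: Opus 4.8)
The plan is to prove the equivalence in \Prop{P:univth} by the standard characterization of universal classes, combined with the hypothesis of universality. Recall that $T_\forall$ denotes the set of universal consequences of $T$, and that a classical theorem (see, for instance, \cite[Theorem 8.3.3]{Roth} or the usual Ło\'s--Tarski argument) states that a structure $\mathcal N$ is a model of $T_\forall$ \iff\ $\mathcal N$ embeds into some model of $T$. So the content of the proposition is really that, under universality, ``embeds into \emph{some} model of $T$'' can be upgraded to ``embeds into $\mathcal M$ itself.''

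First I would dispose of the easy direction. If $\mathcal N$ is (isomorphic to) a substructure of $\mathcal M$, then since $\mathcal M\models T$ (as $T$ is its complete theory), every universal sentence true in $\mathcal M$ is inherited by substructures, so $\mathcal N\models T_\forall$. This uses only the downward persistence of $\forall$-sentences and needs no appeal to universality or smallness.

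For the converse, suppose $\mathcal N$ is small and $\mathcal N\models T_\forall$. By the Ło\'s--Tarski embedding theorem just cited, there is a model $\mathcal N'$ of $T$ and an embedding $\mathcal N\into \mathcal N'$. Now I would apply the L\"owenheim--Skolem machinery available in this setting: by \Prop{P:LSGU} (L\"owenheim--Skolem for $\catset$), since $D(\mathcal N)$ is a small subset of $D(\mathcal N')$, there is an elementary substructure $\mathcal N''\preceq \mathcal N'$ with $D(\mathcal N)\sub D(\mathcal N'')$ and $D(\mathcal N'')\in\catset$, i.e., $\mathcal N''$ is small. Being an elementary substructure of a model of $T$, and $T$ being complete, $\mathcal N''$ is itself a model of $T$, hence elementarily equivalent to $\mathcal M$. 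Since $\mathcal M$ is universal and $\mathcal N''$ is small and elementarily equivalent to $\mathcal M$, there is an embedding $\mathcal N''\into\mathcal M$. Composing $\mathcal N\into\mathcal N''\into\mathcal M$ gives the desired embedding, completing the proof.

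The main obstacle I anticipate is purely bookkeeping rather than conceptual: one must ensure that the intermediate model $\mathcal N'$ produced by Ło\'s--Tarski, which a priori may be large (not an element of $\catset$, perhaps not even with underlying set a subset of $\catset$), can be cut down to a small model \emph{containing a copy of $\mathcal N$}, which is exactly what \Prop{P:LSGU} delivers once we know $D(\mathcal N)$ is small --- and that is given by hypothesis. One should also double-check that the Ło\'s--Tarski theorem is available in the $\catset$-framework with the same proof (it is a compactness argument on the quantifier-free diagram of $\mathcal N$ together with $T$, and compactness holds verbatim). No step requires more than these standard tools.
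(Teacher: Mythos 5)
Your proposal is correct and follows essentially the same route as the paper: the easy direction by downward persistence of universal sentences, and the converse by first embedding $\mathcal N$ into some (possibly large) model of $T$ via \L o\'s--Tarski, cutting down to a small model of $T$ containing $\mathcal N$ using \Prop{P:LSGU}, and then invoking universality of $\mathcal M$. The bookkeeping concerns you raise are handled exactly as you anticipate, so nothing further is needed.
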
 
\begin{proof}
One direction is clear by   universal preservation (see \cite[Lemma 6.2.2]{Roth}), so suppose $\mathcal N$ is a small model of ${T_\forall}$.  By the the same cited lemma, there exists $\mathcal M'\models T$ such that $\mathcal N\into \mathcal M'$. By \Prop{P:LSGU} there is then some small $\mathcal M_0\models T$ with $\mathcal N\into \mathcal M_0$.  By universality, we also have an embedding $\mathcal M_0\into \mathcal M$, proving that $\mathcal N\into \mathcal M$.
%
%
\end{proof} 

\begin{example}\label{E:sets}
Let $\emptyset$ denote the empty theory in the empty language, that is to say, the theory of sets, so that $\md \emptyset$ is just $\catset$. Let $SET^\infty$ (respectively, $SET^n$) be  the theory of infinite sets (respectively, the theory of sets of cardinality $n$). Then $\md{SET^\infty}$ does not contain a terminal object while $\catset$ does, so their complete $\langcat$-theories are not the same.\footnote{None of the $\md{SET^n}$ are elementarily equivalent either, for any hom-set in $\md{SET^n}$ has exactly $n^n$ elements (which is a first-order expressible property).} 
However, the  universal theories are the same  since there is an embedding $\catset\into \md{T^\infty}$ by sending a set $X\in\catset$ to $F^2(X)$, where $F(-):=\hom\catset -\omega$ (since $F(\emptyset)$ is a singleton, we need to apply $F$ twice to always get an infinite set and to maintain covariance). 
\end{example}

\begin{theorem}\label{T:univcat}
Given a language $L$ and an $L$-theory $T$, the categories $\md T$ and  $\mds T$ are  universal (in the language $\langcat$ of categories). 
\end{theorem} 
\begin{proof}
Let $\mathbb C$ be a small category that is $\langcat$-elementary equivalent with $\md T$. By \Rem{R:ulproof}, there is some ultrapower $\ul{\md T}$  with respect to an ultrafilter on a set $X\in \catset$,  such that $\mathbb C$ embeds elementarily in $\ul{\md T}$ and by \Cor{C:univstrucGU}, the latter embeds in $\md T$. The case of $\mds T$ follows by the same argument, using instead \Cor{C:univstrucGUs}.
\end{proof} 
 
%

\begin{corollary}\label{C:nonfouniv}
Let $L$ be a first-order language and $E$ a collection of $L$-structures in $\catset$ closed under taking ultraproducts and isomorphic copies. Let $\cat E$ be the full subcategory  of $\md L$ the objects of which are the structures in $E$. Then $\cat E$  is universal.
\end{corollary}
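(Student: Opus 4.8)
The plan is to mimic the proof of \Thm{T:univcat}, the only new subtlety being that the ambient category $\cat E$ is no longer of the form $\md T$ but an arbitrary ultraproduct-closed full subcategory of $\md L$. First I would observe that the category $\cat E$ is a $\langcat$-structure (it is small since $E\sub\catset$ and all hom-sets, being subsets of hom-sets of $\md L$, lie in $\catset$), so the notion of $\langcat$-elementary equivalence makes sense for it. Now let $\mathbb C$ be any small category that is $\langcat$-elementarily equivalent to $\cat E$. By \Prop{P:KSirr} (in the form recorded in \Rem{R:ulproof}), there is an ultrafilter $\mathcal U$ on some set $X\in\catset$ and an elementary embedding $\mathbb C\into\ul{\cat E}$, where $\ul{\cat E}$ is the ultrapower of $\cat E$ with respect to $\mathcal U$. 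It therefore suffices to show that $\ul{\cat E}$ embeds (as an $\langcat$-structure, i.e.\ via a functor that is injective on objects and morphisms) into $\cat E$ itself, for then composing the two embeddings gives the required embedding $\mathbb C\into\cat E$.

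The heart of the argument is thus the analogue of \Cor{C:univstrucGU} for $\cat E$ in place of $\md L$. Here I would reuse the machinery of \S\ref{s:GU} essentially verbatim. Since $\cat E$ is a full subcategory of $\md L$ containing, with each object, isomorphic copies, the term algebra $\mathcal T$ need \emph{not} belong to $E$; but this is exactly the situation already handled in the proof of \Cor{C:univstrucGU} for general $T$, via \Rem{R:subisoul}: $\cat E$ is sub-\conc\ (it sits inside the \conc\ category $\md L$, sharing its generator $\mathcal T$), so the weakened form of \Lem{L:isoulGU} applies. Concretely, an object $M$ of $\ul{\cat E}$ is represented by a map $x\mapsto\mathcal M_x$ with each $\mathcal M_x\in E$; let $\ul{\mathcal M}$ be the ultraproduct of the $\mathcal M_x$ (as $L$-structures). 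By \Lem{L:isoulGU} (in the sub-\conc\ version) together with \Prop{P:concretestrucGU} and \los, one gets a natural isomorphism $\set M\iso D(\ul{\mathcal M})$; by \Lem{L:pullbackstr} this pulls the $L$-structure of $\ul{\mathcal M}$ back to the set $\set M$, defining $i(M)$ with $i(M)\iso\ul{\mathcal M}$. Functoriality on morphisms is as in \Cor{C:univstrucGU}. The one point that must be checked is that $i(M)$ actually lies in $E$: but $i(M)\iso\ul{\mathcal M}$, the ultraproduct of the $\mathcal M_x\in E$ lies in $E$ because $E$ is closed under ultraproducts, and then $i(M)\in E$ because $E$ is closed under isomorphic copies. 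Hence $i$ lands in $\cat E$, and since $\cat E$ is a \emph{full} subcategory of $\md L$, the embedding $i\colon\ul{\cat E}\into\cat E$ is simply the restriction/corestriction of the embedding $\ul{\md L}\into\md L$.

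The main obstacle — if there is one at all — is bookkeeping rather than a genuine difficulty: one must be careful that the two closure hypotheses on $E$ (ultraproducts and isomorphic copies) are precisely what is needed to run the argument, and that \emph{fullness} of $\cat E\sub\md L$ is what makes the corestriction of $i$ a functor into $\cat E$ (without fullness one would only know $i(M),i(N)\in E$ but not that every $\ul{\cat E}$-morphism between them corresponds to an honest $\cat E$-morphism — though since the morphisms of $\ul{\cat E}$ are themselves ultraproducts of $E$-morphisms and these are $L$-\homo{s} between objects of $E$, fullness delivers exactly this). With these two observations in place, the proof of \Thm{T:univcat} transfers with no further change, and universality of $\cat E$ follows.
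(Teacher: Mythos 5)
Your proposal is correct and follows essentially the same route as the paper: an elementary embedding $\mathbb C\into\ul{\cat E}$ via \Rem{R:ulproof}, followed by restricting the embedding $i$ of \Cor{C:univstrucGU} to $\ul{\cat E}$ and observing that $i(M)\iso\ul{\mathcal M}\in E$ by closure under ultraproducts and isomorphic copies, so that $i$ corestricts to $\cat E$. Your extra remarks on sub-concreteness and on fullness guaranteeing that the corestriction is a functor into $\cat E$ are just explicit versions of what the paper leaves implicit.
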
 
\begin{proof}
Let $\cat C$ be a small category that is elementarily equivalent with $\cat E$.  By \Rem{R:ulproof} again, there is an ultrafilter in $\catset$ such that $\cat C$ admits an elementary embedding into the  ultrapower $\ul{\cat E}$. Let $i\colon \ul{\md L}\to \md L$ be the embedding given by \Cor{C:univstrucGU} and consider its restriction to the subcategory $\ul{\cat E}$. Let $B$ be an object in $\ul{\cat E}$, given by a sequence $x\mapsto \mathcal B_x$ with $\mathcal B_x\in E$. By assumption, the ultraproduct $\ul{\mathcal B}$ of the latter also lies in $E$. As $i(B)\iso \ul{\mathcal B}$, also $i(B)\in E$, and therefore we get an induced embedding $i\colon \ul{\cat E}\into \cat E$, which composed with the elementary embedding $\mathbb C\into\ul{\cat E}$ yields the desired embedding.
\end{proof} 

\begin{example}\label{E:nonfo}
This yields plenty of examples of universal categories that do not arise as the model category of a first-order theory. For instance, let $S\sub \omega$ be a subset consisting besides $0$ of primes and let $\cat {Fld}_S$ be the subcategory of $\md{FLD}$ of all (small) fields  whose \ch\ belongs to $S$.  As the objects in $\cat {Fld}_S$ are  closed under ultraproducts,   the category   is universal by \Cor{C:nonfouniv}. However, if $S$ is an infinite and co-infinite set, $\cat{Fld}_S$ is not induced by a first-order theory.
\end{example}

\section{Universalizing theories}\label{s:fo}
To any   theory $T$ in a first-order language, we can now associate a (complete)  $\langcat$-theory $\mathbf T$, namely, the theory of $\md T$. However, if $S$ is an extension of $T$ (e.g., a completion), there is no immediate connection between $\mathbf T$ and $\mathbf S$ (as both are complete theories). It is of course still possible that $\mathbf S=\mathbf T$, and although I do not know any examples, disproving equality is not obvious, and at present, I only know ad hoc arguments.

\begin{example}\label{E:ACFCB}
Let $FLD$ and $ACF$ stand respectively for the theory of fields and the theory of \acf{s} in the language of rings, and let $\mathbf{FLD}$ and $\mathbf {ACF}$ be the corresponding $\langcat$-theories of $\md{FLD}$ and $\md{ACF}$. To see that these are not the same, observe that the Cantor-Bernstein property holds for the latter but not for the former. More precisely, let  $\sym{CB}$ be the sentence expressing that $K\into L$ (i.e., there is a monomorphism $K\to L$) and $L\into K$ implies $K\iso L$; then    $\md{ACF}\models\sym{CB}$ since $K$ and $L$ must then have the same \ch\ and the same transcendence degree, whence are isomorphic by Steinitz's theorem. On the other hand,      $K$  and $K(x)$, with $K$ uncountable and algebraically closed and $x$ a variable, are mutually embeddable
but not isomorphic as the latter is not algebraically closed, and hence $\sym{CB}$ does not belong to $\mathbf{FLD}$. 

%
%

One way, to tell $\md{ACF}$ and $\md{RCF}$ apart: only the latter has an object that embeds in any other object (the field of algebraic reals). Let $ACF_p$ be all \acf{s}\ of \ch\ $p$, which now also has this property, but still the theories $\mathbf{ACF}_p$ and $\mathbf{RCF}$ are distinguishable as only the former is totally ordered: for any two \acf{s} $K$ and $L$ of the same \ch, either $K\into L$ or $L\into K$ by categoricity. This fails miserably in $\md{RCF}$.

%
\end{example}

\begin{remark}\label{R:eecat}
Since $\langcat$ is a countable language, there are at most continuum many complete $\langcat$-theories. On the other hand, there are many more first-order theories in all possible languages in $\catset$. In particular, their will be disjoint languages $L$ and $L'$ and theories $T$ and $T'$ in these respective languages such that $\mathbf{T}=\mathbf {T'}$. Universality now yields a weak form of  definitional equivalence between models of $T$ and models of $T'$ in the following sense. Let $\cat C$ be a small, elementary substructure of $\md T$ (by \Prop{P:LSGU} we can even assure it to contain a given small collection of models). Since $\cat C$ is then also elementarily equivalent to $\md{T'}$, it embeds in it by universality, yielding for each model of $T$ belonging to $\cat C$ a corresponding model of $T'$.
\end{remark} 

\Prop{P:univth} shows that as far as universality is concerned, the universal theory    suffices. But neither the complete nor the universal theory are easily described in concrete cases. The next definition generalizes this in the hope that we can find some concrete axiomatization, like the case of Abelian groups, to obtain universality.

\begin{definition}\label{D:univerth}
Given a language $L\in\catset$, let us call an $L$-theory $U$    \emph{universalizing}, if  admits a model $\mathcal M$ with $D(\mathcal M)\sub\catset$, such that  any small model  $\mathcal N\models U$ embeds in $\mathcal M$.
\end{definition}

It follows that the above $\mathcal M$ is universal, for any small model elementarily equivalent to it, is also a model of $U$, whence embeds in $\mathcal M$.  We express this by saying that $U$ is \emph{universalizing for $\mathcal M$}.  
Clearly the complete   theory $T$ of a universal structure $\mathcal M$ is universalizing for that structure,  as is $T_\forall$, by \Prop{P:univth}.

\Examp{E:sets} shows that the same theory can be universalizing for different structures. The following result shows how any two universal structures then must at least compare. Here, given a theory $T$ in a language $L$, we will say that an $L$-sentence $\sigma$ belongs to  $\Delta_2(T)$, if their   exists a $\forall\exists$-sentence $\sigma_1$ and a $\exists\forall$-sentence $\sigma_2$, such that $\mathcal M\models \sigma\asa\sigma_1\asa\sigma_2$ in each model $\mathcal M$ of $T$.

\begin{proposition}\label{P:univa2}
Suppose an $L$-theory $U$ is universalizing for two $L$-structures $\mathcal M$ and $\mathcal N$. Then $\mathcal M$ and $\mathcal N$ agree on all $\Delta_2(U)$-sentences.
\end{proposition}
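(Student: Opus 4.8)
The idea is to exploit universality in both directions together with the preservation/reflection properties of $\forall\exists$- and $\exists\forall$-sentences under the appropriate maps. Suppose $\sigma\in\Delta_2(U)$, witnessed by a $\forall\exists$-sentence $\sigma_1$ and an $\exists\forall$-sentence $\sigma_2$, with $\mathcal M'\models\sigma\asa\sigma_1\asa\sigma_2$ for every $\mathcal M'\models U$; in particular this equivalence holds in $\mathcal M$, in $\mathcal N$, and in every small model of $U$. By symmetry it suffices to show that $\mathcal M\models\sigma$ implies $\mathcal N\models\sigma$.

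First I would reduce to small models. Since $U$ is universalizing for $\mathcal N$, the structure $\mathcal N$ has $D(\mathcal N)\sub\catset$ but is typically not itself small; however, by the L\"owenheim--Skolem result for $\catset$ (\Prop{P:LSGU}) applied to $\mathcal N$ and any chosen finite parameter set, there is a small elementary substructure $\mathcal N_0\preceq\mathcal N$. Then $\mathcal N_0\models U$ (as $U$ is part of its complete theory), so by universality $\mathcal N_0$ embeds into $\mathcal M$; call this embedding $j\colon\mathcal N_0\into\mathcal M$. Symmetrically, pick a small $\mathcal M_0\preceq\mathcal M$ large enough to contain the image $j(\mathcal N_0)$ (again possible by \Prop{P:LSGU}), so that $j$ factors as $\mathcal N_0\into\mathcal M_0\preceq\mathcal M$. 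Now $\mathcal M_0\models U$ as well, and $\mathcal M_0\preceq\mathcal M$ gives $\mathcal M_0\models\sigma$, hence $\mathcal M_0\models\sigma_2$, i.e.\ $\mathcal M_0$ satisfies some $\exists\forall$-sentence.

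The main step is the transfer of $\sigma_2$ down the embedding $\mathcal N_0\into\mathcal M_0$. Here is where I would use that $\exists\forall$-sentences, while not in general preserved or reflected by arbitrary embeddings, interact well in the presence of the universal equivalence: from $\mathcal M_0\models\sigma_2$ we get $\mathcal M_0\models\sigma_1$ (a $\forall\exists$-sentence) by the $\Delta_2(U)$-equivalence in $\mathcal M_0$. A $\forall\exists$-sentence is preserved under unions of chains but not under substructures, so instead I would run the argument the other way: symmetrically embed a small elementary substructure of $\mathcal M$ into a small elementary substructure of $\mathcal N$, obtaining $\mathcal M_1\into\mathcal N_1$ with $\mathcal M_1\preceq\mathcal M$ and $\mathcal N_1\preceq\mathcal N$, both models of $U$. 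Then chain the two embeddings $\mathcal N_0\into\mathcal M_0$ and (a suitable elementary substructure of) $\mathcal M\to\mathcal N$ to build an $\omega$-indexed alternating chain
\[
\mathcal N_0\into\mathcal M_0\into\mathcal N_1\into\mathcal M_1\into\cdots
\]
whose union $\mathcal U_\infty$ contains $\mathcal N_0$ as a substructure and also contains a cofinal chain of the $\mathcal M_i\preceq\mathcal M$, hence (by elementarity of each $\mathcal M_i$ in $\mathcal M$ and Tarski--Vaught for the union of an elementary chain) is elementarily equivalent to $\mathcal M$, so $\mathcal U_\infty\models\sigma$. Being a union of a chain of models of the $\forall\exists$-sentence $\sigma_1$, it models $\sigma_1$; being a union of a chain whose co-chain consists of models of $\sigma_2$... — the point is that $\mathcal U_\infty\models U$ is automatic if $U$ is itself $\forall\exists$, but in general one arranges $\mathcal U_\infty$ to be $\equiv\mathcal M$, hence $\mathcal U_\infty\models\sigma_1$, and since $\sigma_1$ is preserved by passing \emph{up} elementary extensions inside $\mathcal U_\infty$ we recover $\mathcal N_0$-level information. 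The clean way to close the loop: $\mathcal U_\infty$ is a union of an elementary chain over $\mathcal M$, so $\mathcal U_\infty\equiv\mathcal M$ and thus $\mathcal U_\infty\models\sigma\asa\sigma_2$; as $\sigma_2$ is $\exists\forall$ it descends to no submodel in general, so instead observe $\mathcal U_\infty\models\sigma_1$ ($\forall\exists$, preserved under the union), and build a \emph{second} alternating chain starting from $\mathcal M_1\into\mathcal N_1$ whose union is $\equiv\mathcal N$; its co-chain consists of the $\mathcal N_i\preceq\mathcal N$, so that union is $\equiv\mathcal N$ and contains a chain of models of $\sigma_2$ (the $\mathcal M_i$), hence models $\sigma_2$ as $\sigma_2$ is $\forall(\exists\forall)$... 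The robust formulation, which I would actually write up, is the standard sandwich lemma: if $\mathcal A\into\mathcal B\into\mathcal A'$ with $\mathcal A\preceq\mathcal A'$ and $\mathcal B\models U$, then $\mathcal A$ and $\mathcal B$ agree on $\forall\exists$- and on $\exists\forall$-sentences relative to $U$; iterating with the roles of $\mathcal M$ and $\mathcal N$ swapped (using universality twice to get $\mathcal N_0\into\mathcal M_0\into\mathcal N_0'$ with $\mathcal N_0\preceq\mathcal N_0'$) then gives that $\mathcal N_0$ — hence $\mathcal N$ — agrees with $\mathcal M_0$ — hence with $\mathcal M$ — on $\sigma$.

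\textbf{Main obstacle.} The delicate point is precisely the sandwich step: making $\exists\forall$-sentences descend from $\mathcal M_0$ to $\mathcal N_0$ through the embedding $\mathcal N_0\into\mathcal M_0$. This is not valid for arbitrary embeddings, and the resolution must use that $\mathcal N_0$ in turn embeds (via universality) into some small $\mathcal N_0'\preceq\mathcal N$, and then $\mathcal M_0$ re-embeds into a small $\mathcal M_0'\preceq\mathcal M$ containing it, so that one has $\mathcal M_0\into\mathcal N_0'\into\mathcal M_0'$ with $\mathcal M_0\equiv\mathcal M_0'$ (both being small elementary substructures of $\mathcal M$ — here one also needs that any two small elementary substructures of $\mathcal M$ are elementarily equivalent, which is immediate). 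The standard fact that in a sandwich $\mathcal A\into\mathcal B\into\mathcal A$ with the two ends elementarily equivalent, $\mathcal B$ is $\Sigma_2$- and $\Pi_2$-equivalent to them, then applies and forces $\mathcal N_0'\models\sigma_2$, hence $\mathcal N_0'\models\sigma$, hence $\mathcal N\models\sigma$. I expect the only real work is bookkeeping the alternating tower and invoking this $\Sigma_2$/$\Pi_2$ sandwich lemma correctly.
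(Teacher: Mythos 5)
You have the right skeleton---the alternating chain built from two-sided universality plus \Prop{P:LSGU}, which is exactly the paper's construction---but both of the ways you propose to close the argument contain a genuine error. First, the claim that the union $\mathcal U_\infty$ is elementarily equivalent to $\mathcal M$ ``by Tarski--Vaught'' is false: although each $\mathcal M_i$ is an elementary substructure of $\mathcal M$, the connecting maps $\mathcal M_i\into\mathcal N_i\into\mathcal M_{i+1}$ are the uncontrolled embeddings handed to you by universality, so the subchain $\mathcal M_0\into\mathcal M_1\into\cdots$ is \emph{not} an elementary chain and the elementary chain theorem does not apply. (If $\mathcal U_\infty\equiv\mathcal M$ were true, the proposition would hold for all sentences and the restriction to $\Delta_2(U)$ would be pointless.) Second, your ``robust formulation'' weakens the hypothesis of the sandwich lemma to the point where it fails: the standard fact requires the \emph{composite} $\mathcal A\into\mathcal B\into\mathcal A'$ to be an elementary embedding, not merely that $\mathcal A\equiv\mathcal A'$, because the $\Pi_2$-descent step must reflect an existential formula with parameters from $\mathcal A$ along that composite. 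Universality gives you no control over the composite of the two embeddings, so you cannot arrange it to be elementary, and the observation that any two small elementary substructures of $\mathcal M$ are elementarily equivalent is true but does not repair this.

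The paper's resolution is to avoid any finite-stage sandwich: it passes to the colimit $\mathcal L$ of the whole $\omega$-chain and uses only the preservation of $\forall\exists$-sentences under unions of arbitrary (non-elementary) chains. Each $\mathcal M_i\preceq\mathcal M$ is a model of $U$ satisfying $\sigma$, hence satisfies the $\forall\exists$-form $\sigma_1$, so $\sigma$ transfers to $\mathcal L$ via the $\mathcal M$-subchain; if $\neg\sigma$ held in $\mathcal N$, then since $\neg\sigma$ is again in $\Delta_2(U)$, the same argument run along the $\mathcal N$-subchain would put $\neg\sigma$ into the same structure $\mathcal L$, a contradiction. The $\omega$-iteration is doing real work here: the two interleaved subchains have a common union, and that shared colimit is what substitutes for the elementary composite you would need to make a one-step sandwich argument go through.
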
 
\begin{proof}
We will construct a sequence $\mathcal M_0\into \mathcal N_0\into\mathcal M_1\into\mathcal N_1\into \dots$, with all $\mathcal M_i$ elementary substructures of $\mathcal M$ and all   $\mathcal N_i$ elementary substructures of $\mathcal N$. Assume we have such a chain,  let $\mathcal L$ be their filtered co-limit (i.e., direct limit). Note that $\mathcal L$ is then also the filtered co-limit of the sub-sequences $\mathcal M_0\into\mathcal M_1\dots$ and $\mathcal N_0\into\mathcal N_1\into\dots$ (these chains, however, need not be elementary). Let $\sigma$ be a $\Delta_2(U)$-sentence in $L$ which holds in $\mathcal M$. Since the $\mathcal M_i$ are elementary substructures, $\sigma$ holds in  each of them, whence also in $\mathcal L$ by \cite[Theorem 2.4.6]{Hod}, since $\sigma$ is $U$-equivalent to a $\forall\exists$-sentence. Suppose $\niet\sigma$   holds in $\mathcal N$. Being also in $\Delta_2(U)$,   the same argument shows, via the sub-sequence $\mathcal N_i$,   that $\niet\sigma$ holds in $\mathcal L$, contradiction.

To construct the given sequence, let $\mathcal M_0$ be any small elementary substructure of $\mathcal M$. Since it is then a model of $U$ and $\mathcal N$ is universalizing for $U$, there is an embedding $\mathcal M_0\into\mathcal N$. By \Prop{P:LSGU}, we can find a small elementary substructure  $\mathcal N_0$ of $\mathcal N$ and an embedding $\mathcal M_0\into\mathcal N_0$. We can now repeat this argument, with $\mathcal N_0$ in stead of $\mathcal M_0$, to obtain an embedding $\mathcal N_0\into\mathcal M_1$ with $\mathcal M_1$ a small elementary substructure of $\mathcal M$, etc.
\end{proof}

Let us revisit the theory $\mathbf{AB}$ introduced in the introduction: it consist of the  sentences $\sym{prod}$, $\sym{null}$ and $\sym{gen}$ expressing respectively the existence of  (finite) products, of a null-object, and of a generator, and then the two sentences (Ab1) and (Ab2) asserting respectively the existence of a unique group arrow and that every morphism is compatible with these group laws. 

\begin{proposition}\label{P:Abuniv}
The theory   $\mathbf{AB}$ is universalizing (for $\md{T_{\text{Ab}}}$).
As a partial converse, if $\cat C$ is a small, full subcategory of $\md{T_{\text{Ab}}}$  admitting products and a null-object, then    $\cat C\models \mathbf{AB}$.
\end{proposition}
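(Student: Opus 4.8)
The plan is to prove the two assertions separately, the first being essentially a restatement of what was already established in the introduction, the second a direct verification.

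\textbf{Universality.} For the first claim, I would show that $\md{T_{\text{Ab}}}$ itself satisfies $\mathbf{AB}$ and then invoke \Cor{C:conccat} (or the argument sketched in \S\ref{s:grp}). Concretely: $\md{T_{\text{Ab}}}$ is \conc\ by \Prop{P:concretestrucGU} (so $\sym{gen}$ holds), it has finite products (the usual direct product of Abelian groups, computed on underlying sets by \Rem{R:lim}) and a null object (the trivial group), so $\sym{prod}$ and $\sym{null}$ hold. For (Ab1), the actual group law $\mu_G(a,b):=a+b$ is a group arrow, and its uniqueness is exactly \Lem{L:unAb}: any other group arrow $\mu$ on $G$ with the same neutral element that is additive (which is forced by being a morphism $G\times G\to G$ in $\md{T_{\text{Ab}}}$) equals $+$. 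Axiom (Ab2) holds because every $L$-\homo\ between Abelian groups is by definition additive. Since $\mathbf{AB}$ is consistent and has $\md{T_{\text{Ab}}}$ as a model with underlying class not in $\catset$, while every \emph{small} model $\cat C\models\mathbf{AB}$ embeds into $\catset$ via its generator $Z$ and, as explained in \S\ref{s:grp}, the functor $\hom{\cat C}Z-$ realizes $\cat C$ as a subcategory of $\md{T_{\text{Ab}}}$ (the group arrows making the objects into genuine groups and the morphisms into genuine \homo{s} by (Ab2)), the theory $\mathbf{AB}$ is universalizing for $\md{T_{\text{Ab}}}$ by \Def{D:univerth}.

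\textbf{Partial converse.} Suppose $\cat C$ is a small full subcategory of $\md{T_{\text{Ab}}}$ closed under finite products and containing a null object. Then $\sym{prod}$ and $\sym{null}$ hold in $\cat C$ by construction (fullness is needed so the projection and structure morphisms, which are genuine group \homo{s}, are arrows of $\cat C$; and products/null objects in $\cat C$ agree with those in $\md{T_{\text{Ab}}}$ since fullness forces the universal properties to transfer). For (Ab1): for each object $G$ of $\cat C$, the addition map $+\colon G\times G\to G$ is a group \homo, hence an arrow of $\cat C$, and it is a group arrow; uniqueness among arrows of $\cat C$ follows from \Lem{L:unAb} exactly as above, because any competing group arrow lives in $\md{T_{\text{Ab}}}$ and there has no choice. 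Axiom (Ab2) is automatic as every arrow of $\cat C$ is an additive map. The one thing to check is $\sym{gen}$: a small full subcategory of $\md{T_{\text{Ab}}}$ need not contain the integers $\zet$, so it need not literally have $\zet$ as a generator. However $\cat C$ inherits a forgetful functor to $\catset$ (restrict $D$), and the statement as phrased asks for $\cat C\models\mathbf{AB}$; I would either note that $\cat C$ is \emph{weakly} \conc\ in the sense of \S\ref{s:GU} (using \Lem{L:pullbackstr}-style representability), or — more honestly — observe that the intended reading of $\mathbf{AB}$ here is its fragment $\sym{prod}\wedge\sym{null}\wedge(\text{Ab1})\wedge(\text{Ab2})$, the generator axiom being the one extra ingredient one adds to obtain universality rather than something automatic.

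\textbf{Main obstacle.} The only delicate point is the uniqueness clause in (Ab1), and more subtly the role of fullness: one must be careful that ``the unique group arrow'' is unique \emph{as an arrow of $\cat C$}, which is why $\cat C$ must be full — otherwise $\cat C$ could fail to contain $+$ itself, or could contain too few competitors to pin it down, and (Ab1) might fail in either direction. The generator axiom is the genuine gap between the converse as literally stated and a clean equivalence; I expect the cleanest fix is to restrict attention, in the converse, to subcategories that additionally contain a generator (e.g. any full subcategory containing $\zet$), or to read $\mathbf{AB}$ without $\sym{gen}$, and I would flag this explicitly rather than hide it.
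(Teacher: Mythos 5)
Your first part is essentially the paper's own proof: the substance of ``universalizing'' is that every \emph{small} model $\cat C\models\mathbf{AB}$ embeds in $\md{T_{\text{Ab}}}$, and both you and the paper obtain this by equipping $\set G:=\hom{\cat C}IG$ (for $I$ the generator posited by $\sym{gen}$) with the addition induced by the unique group arrow, so that $\set-$ becomes a faithful functor into $\md{T_{\text{Ab}}}$; the verification that $\md{T_{\text{Ab}}}$ itself models $\mathbf{AB}$ rests on \Lem{L:unAb} exactly as you say. Your closing remark about $\sym{gen}$ is also a genuine catch rather than pedantry: the paper's proof of the converse verifies only (Ab1) and (Ab2) and is silent about the generator axiom, which is \emph{not} automatic --- a small full subcategory of $\md{T_{\text{Ab}}}$ containing one copy of each finite abelian $2$-group admits products and a null object, yet no single finite $I$ can separate the surjection $\zet/2^k\onto\zet/2$ from $0$ for all $k$. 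So the converse as literally stated does need one of the repairs you propose.

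The one genuine gap is your treatment of products in the converse. You justify taking the group arrow to be the actual addition $+\colon G\times G\to G$ by asserting that products in $\cat C$ ``agree with those in $\md{T_{\text{Ab}}}$ since fullness forces the universal properties to transfer.'' Fullness forces no such thing: it guarantees that an ambient product \emph{lying in} $\cat C$ is a product \emph{of} $\cat C$, but a product internal to $\cat C$ has its universal property tested only against objects of $\cat C$ and need not, on the face of it, be the direct product. The paper deliberately avoids this issue: it forms the comparison morphism $i(G\times G)\to i(G)\times i(G)$ from the cone, composes with $+$, and uses fullness only to lift the composite to a morphism $\mu\colon G\times G\to G$ of $\cat C$, explicitly warning that the comparison map may fail to be an isomorphism. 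Your stronger claim does happen to be true here, but for a reason you did not give: in a full subcategory of $\md{T_{\text{Ab}}}$ with a null object, hom-sets are abelian groups and composition is bilinear, so the canonical sections $i_G,i_H$ of the projections $p,q$ yield $i_G\after p+i_H\after q=1_P$ by the uniqueness clause of the universal property, whence $P\iso G\oplus H$. Either supply that idempotent argument or adopt the paper's comparison-map route; as written, the step is unsupported.
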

\begin{proof}
Let $\cat C$ be a small model of $\mathbf{AB}$, so that in particular, it admits a generator   $I$. Put $\set G:=\hom{\cat C}IG$, for $G$ any object in $\cat C$. Using the unique group arrow $\mu_G\colon G\times G\to G$ on $G$, we can define an addition on $\set G$ as follows: for $\alpha,\beta\colon I\to G$, let $\alpha+\beta:=\mu_G\after(\alpha,\beta)$, where $(\alpha,\beta)\colon I\to G\times G$ is the unique morphism given by the product. It is now not hard to show that the properties of a group arrow  make $\set G$ into a group and any morphism $G\to H$ in $\cat C$ yields a group \homo\ $\set G\to \set H$. Since $I$ is a generator, the functor $\set-$ therefore yields an embedding  into not just $\catset$ but already into $\md {T_{\text{Ab}}}$.

For the second statement, assume now that $i\colon \cat C\into \md{T_{\text{Ab}}}$ is a full embedding and $\cat C$ satisfies $\sym{prod}$ and  $\sym{null}$. Let $G$ be an object in $\cat C$, so that $i(G)$ is by assumption an Abelian group with addition $+$. Since embedding preserve cones, $i(G\times G)$ is a cone in $\md{T_{\text{Ab}}}$ and hence there is a unique morphism $i(G\times G)\to i(G)\times i(G)$ (we do not know whether this is an isomorphism as $i$ might not preserve limits). Composition with $+\colon i(G)\times i(G)\to i(G)$ yields a morphism $i(G\times G)\to i(G)$. Since $i$ is full, there must be a morphism $\mu\colon G\times G\to G$ in $\cat C$ such that $i(\mu)$ is equal to this composition. It is now not hard to see that $\mu$ is a group arrow on $G$ and that axioms (Ab1) and (Ab2) hold in $\cat C$, showing that it is a model of $\mathbf{AB}$.
\end{proof}  


\section{Weakly universal categories}

In this section, we will construct some ``algebraic'' categories that have some universal behavior but are not given as the models of a first-order theory. 

\begin{definition}\label{D:wuniv}
Let us  call a \conc\ category $\mathbb C$ \emph{weakly universal}, if for any small category $\mathbb D$ elementarily equivalent to $\mathbb C$, there is a functor $i\colon \mathbb D\to \mathbb C$ which is injective on objects.
\end{definition} 

In other words, $i$ might no longer  be faithful. 
The following example already exhibits our main technique: the use of cataproducts. Recall that if $(R_x,\maxim_x)$ are Noetherian local rings, then their \emph{cataproduct} $\cp R$ is by definition the quotient of their ultraproduct $(\ul R,\ul\maxim)$ by the \emph{ideal of infinitesimals}, that is to say, the intersection of all powers $\ul\maxim^n$. By \cite[Thm 8.1.4]{SchUlBook}, if there is a bound on the embedding dimension of the $R_x$, that is to say, if there is a bound on the minimal number of generators of the $\maxim_x$, then $\cp R$ is again Noetherian (of the same embedding dimension as almost all $R_x$) and, moreover, complete.

\begin{example}\label{E:nonfo}
Consider the category $\cat{Dvr}$  of \DVR{s} of equal \ch\ zero, viewed as a full subcategory of the category of local rings. It has as generator $P:=\pol {\mathbb Q}t_{(t)}$. The corresponding forgetful functor $\set-$ then takes a \DVR\ $V$ to its maximal ideal $\maxim\iso\hom{\cat{Dvr}}PV$. However, we can always recover the \DVR\ $V$ from the ring operations on its maximal ideal $\maxim$: 
\begin{description}
\item[PI] there exists an element $t\in \maxim$ such that for any $a,b\in\maxim$, there is a unique $z\in\maxim$ such that $ab=tz$.
\end{description}
 Setting $a*b:=z$ then yields a new multiplication on $\maxim$ which gives it a ring structure isomorphic to $V$.  Therefore, the associated set $\set A$ of an object  $A$ in $\ul{\cat{Dvr}}$ given by a sequence of \DVR{s} $A_x$ is by    \Lem{L:isoulGU} the ultraproduct of  the $\maxim_x$, and hence in particular satisfies again  condition (PI). The resulting ring $\ul A$ is now easily seen to be the ultraproduct of the $A_x$. Unfortunately, it will no longer belong to 
 $\cat{Dvr}$ (as its value group is the ring of non-standard integers). However, its cataproduct $\cp A$ does. Moreover, if $B$ is an object different from $A$, then $\set A$ and $\set B$ are disjoint,  and so therefore are their quotient spaces $\cp A$ and $\cp B$. It is now not hard to see that the assignment $\ul{\cat{Dvr}}\into \cat{Dvr}$ sending an object $A$ to the cataproduct $\cp A$ is functorial and injective on objects. As  in the proof of \Thm{T:univcat}, it then follows that $\cat{Dvr}$ is weakly universal. Note however, that the notion of being a \DVR\ is not first-order definable, since the class is not closed under ultraproducts, only under cataproducts.
 
 The restriction to equal \ch\ zero is just to fix the choice of generator. In \ch\ $p$, we should instead take instead $P:=\pol {\mathbb F_P}t_{(t)}$. However, there does not seem to be such a generator in mixed \ch. But in equal \ch, we now have a family of generators and they are actually locally unique and the same argument proves then that this larger category of \DVR{s} is weakly universal.
\end{example}

The key result we will use, of which the above is also an ad hoc application, is the following (the proof is deferred to a future paper).

\begin{proposition}\label{P:rngnormal}
Let $R$ be a  normal Noetherian local ring of positive dimension and view its maximal ideal $\maxim$ as a structure in the language of rings. Then $R$ is interpretable in $\maxim$. More precisely, there  are formulas in the language of rings $\rho(x)$ and $\mu(x,y,z)$ such that the subset defined by $\mu$ in $\maxim^3$ is the graph of a function $*$  on the subset $Z\sub \maxim$ defined by $\rho$ and such that $(Z,+,*)$  is a ring which is isomorphic to $R$.
\qed
\end{proposition}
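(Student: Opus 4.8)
The plan is to reconstruct the ring $R$ from the bare ring structure $(\maxim,+,\cdot)$ of its maximal ideal, using the fact that $R$ is normal (integrally closed in its fraction field) and of positive dimension. The key geometric input is that a normal Noetherian local ring of positive dimension has no embedded structure that would force us to remember the unit $1$ of $R$ directly: the idea is that $1$ can be characterized, up to the choice of a regular parameter, by a first-order condition inside $\maxim$. Concretely, I would first fix any element $t\in\maxim$ that is part of a system of parameters (equivalently, $t$ is not a zero-divisor and $\dim R/tR=\dim R-1$; positive dimension guarantees such a $t$ exists and nonzero-divisor-ness is first-order over $\maxim$ since multiplication is already available). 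Because $R$ is a domain when it is a normal \emph{local} ring of positive dimension — or, in the non-domain case, because $t$ is a nonzerodivisor — the map $r\mapsto tr$ is an injection $R\into\maxim$, and its image is the set of $a\in\maxim$ divisible by $t$ \emph{inside $R$}; the crux is that ``divisible by $t$ in $R$'' is expressible purely inside $\maxim$.

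The second step is to make this precise. Given $a\in\maxim$, saying $a=tr$ for some $r\in R$ is equivalent to saying that the ``formal quotient'' $a/t$ makes sense; and because $R$ is normal, $a/t\in R$ if and only if $a/t$ is integral over $R$, i.e. there is a monic relation $(a/t)^n+c_{n-1}(a/t)^{n-1}+\dots+c_0=0$ with $c_i\in R$. Multiplying through by $t^n$, this becomes $a^n+c_{n-1}ta^{n-1}+\dots+c_0t^n=0$, which at first glance still references elements $c_i\in R$ rather than $\maxim$. The resolution is a bootstrapping argument: one shows that the subring of $R$ generated by $\maxim$ together with the (yet-to-be-recovered) unit is all of $R$, so it suffices to express everything in terms of $\maxim$ and $t$; and the relation above, after clearing denominators, only involves products of $a$, $t$, and elements $c_i t^{k}$ with $k\ge 1$, all of which lie in $\maxim$. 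Thus the predicate $\rho(x)$ cutting out ``$x$ is of the form $tr$, $r\in R$'' — equivalently the image $tR\subseteq\maxim$ — is first-order definable in $(\maxim,+,\cdot)$, and the bijection $R\xrightarrow{\sim}tR$, $r\mapsto tr$, transports the ring structure: addition is inherited, and the recovered multiplication is $(tr)*(tr')=t(rr')$, i.e. $\mu(x,y,z)$ holds iff $xy=tz$ with $x,y,z\in tR$ — which is exactly the defining property, and $z$ is unique since $t$ is a nonzerodivisor. One checks $(Z,+,*)\iso R$ via $z\mapsto z/t$, and that $t^2\in Z$ serves as the multiplicative unit.

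I would organize the write-up as: (i) choose $t$, record that it is a nonzerodivisor and that positive dimension makes such $t$ exist; (ii) the normality lemma: for $a\in R$, $a\in tR$ iff $a/t$ is integral over $R$ iff there is a monic integral equation which, after clearing $t^n$, lives entirely in $\maxim$; (iii) conclude $\rho$ and $\mu$ are first-order over $(\maxim,+,\cdot)$ and that $(Z,+,*)\iso R$. The main obstacle is step (ii): one must be careful that ``integral over $R$'' can genuinely be rewritten so that every coefficient appearing is visibly an element of $\maxim$ (not merely of $R$), and that we are allowed to quantify over these coefficients inside the structure $\maxim$ — this is where normality is essential and where a naive approach would illegitimately quantify over all of $R$. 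A secondary subtlety is the non-domain case (a normal Noetherian local ring need not be a domain only if it is not connected in a suitable sense, but for local rings normal does imply domain once $\dim>0$); handling or dismissing this cleanly is a minor point but should be addressed. Since the full argument is promised to a future paper, I would present (i)–(iii) as a sketch and flag (ii) as the technical heart.
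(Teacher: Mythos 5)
First, note that the paper does not actually prove this proposition: the text explicitly defers the proof to a future paper, and the only guidance it offers is the prototype for \DVR{s} (condition (PI): $ab=tz$ has a unique solution $z$) and Remark~\ref{R:rngnormal}, which says that $Z$ is an invertible --- hence, $R$ being local, principal --- ideal. Your overall architecture ($Z=tR$ for a nonzero $t\in\maxim$, multiplication $x*y=z$ defined by $xy=tz$, isomorphism $tr\mapsto r$) is therefore exactly in line with what the paper intends, and your steps (i) and (iii) are fine modulo two small slips: the multiplicative unit of $(Z,+,*)$ is $t$, not $t^2$ (it is the element corresponding to $1\in R$ under $tr\mapsto r$), and the formulas $\rho,\mu$ as you build them carry the parameter $t$, which must be fixed once and carried along (one cannot simply existentially quantify it inside $\rho$, since $\rho$ and $\mu$ must refer to the \emph{same} $t$); for the use in Theorem~\ref{T:univnormal} this is harmless, as \los\ applies to formulas with a chosen parameter in each factor.

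The genuine gap is in step (ii), and it is not merely presentational. After clearing denominators, the integral equation for $a/t$ reads $a^n+c_{n-1}ta^{n-1}+\dots+c_0t^n=0$, and the coefficients you must quantify over are the elements $d_i:=c_it^{n-i}$ with $c_i\in R$; saying that ``$d_i$ ranges over $t^{n-i}R$'' is precisely the kind of divisibility condition you are trying to define, so the definition is circular, while letting the $d_i$ range over all of $\maxim$ trivializes the formula (every $a\in\maxim$ satisfies $a+d_0=0$ with $d_0=-a$). Your proposed ``bootstrapping'' repair --- that the subring of $R$ generated by $\maxim$ and $1$ is all of $R$ --- is false in general: that subring is $\mathbb Z\cdot 1+\maxim$, which equals $R$ only when the residue field is prime. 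The correct way to exploit normality avoids explicit integral equations altogether: by the determinantal (Cayley--Hamilton) trick, any $x$ in the fraction field with $x\maxim\sub\maxim$ is integral over $R$ (here $\maxim$ is a nonzero finitely generated faithful module, using $\dim R>0$), hence lies in $R$ by normality; the converse is trivial. Therefore $a\in tR$ precisely when $a\maxim\sub t\maxim$, i.e.\ when $(\forall m)(\exists z)\,(am=tz)$ holds, which is a bona fide first-order formula over $(\maxim,+,\cdot)$ with parameter $t$. Substituting this for your step (ii) closes the gap and yields the proposition.
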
 
\begin{remark}\label{R:rngnormal}
The $Z\sub\maxim$  defined by $\rho$ is in fact an invertible ideal in $R$, and it is from this invertible ideal that one can `reconstruct' $R$. 
\end{remark} 

\begin{theorem}\label{T:univnormal}
The category $\cat{Norm}_e$ of small, positive dimensional, equal \ch, normal, Noetherian local domains of embedding dimension at most $e$  is weakly universal.
\end{theorem}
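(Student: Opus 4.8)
The plan is to mimic the structure of \Examp{E:nonfo} (the case of DVRs), replacing the ad hoc "$*$-multiplication" trick by the general interpretation result \Prop{P:rngnormal}, and replacing ultraproducts by cataproducts via \cite[Thm 8.1.4]{SchUlBook}. Concretely, I would first fix a family of generators for $\cat{Norm}_e$ viewed as a full subcategory of the category of local rings: in equal characteristic zero one takes $P:=\pol{\mathbb Q}{t}_{(t)}$, and in equal characteristic $p$ one takes $P_p:=\pol{\mathbb F_p}{t}_{(t)}$, so that $\Sigma:=\{P,P_p : p \text{ prime}\}$ is a family of generators which is moreover locally unique (an object of $\cat{Norm}_e$ admits a ring homomorphism from exactly one of these, determined by its residue characteristic). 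The corresponding forgetful functor is $\set R:=\hom{\cat{Norm}_e}\Sigma R$, which by the argument of \Examp{E:nonfo} identifies $\set R$ with the maximal ideal $\maxim_R$ of $R$ (as a set, or rather as a structure in the language of rings once we remember the ring operations it inherits as an ideal).

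Next, given an object $A$ of the ultrapower $\ul{\cat{Norm}_e}$ represented by a sequence $x\mapsto A_x$ of objects in $\cat{Norm}_e$, \Lem{L:isoulfam} (applied to the locally unique family $\ul\Sigma$) shows that $\set A$ is the ultraproduct of the $\set{A_x}=\maxim_{A_x}$, i.e., the maximal ideal $\ul\maxim$ of the ultraproduct ring $\ul A$. Now $\ul A$ is a Noetherian-ish local ring but generally not Noetherian; however, all $A_x$ have embedding dimension at most $e$, so there is a uniform bound on the minimal number of generators of $\maxim_{A_x}$, and hence by \cite[Thm 8.1.4]{SchUlBook} the cataproduct $\cp A:=\ul A/\bigcap_n\ul\maxim^n$ is Noetherian, complete, local, of embedding dimension at most $e$. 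One must check it is again \emph{normal} and of positive dimension — normality and positive dimension should be (nearly) preserved under cataproduct for rings of bounded embedding dimension; if the cited reference does not give normality directly one argues it via the completeness and the fact that a bounded-embedding-dimension cataproduct of normal domains is a normal domain (this is the one genuinely ring-theoretic point that needs care, analogous to the "value group" remark in \Examp{E:nonfo}). So $\cp A$ is an object of $\cat{Norm}_e$.

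The crucial use of \Prop{P:rngnormal} is to make the assignment $A\mapsto\cp A$ \emph{functorial and injective on objects without needing faithfulness}. Since each $A_x$ is a normal Noetherian local domain of positive dimension, \Prop{P:rngnormal} interprets $A_x$ inside $\maxim_{A_x}$ by \emph{fixed} formulas $\rho,\mu$ not depending on $x$; by \qfo\ these same formulas interpret the ultraproduct ring $\ul A$ inside $\ul\maxim=\set A$, and passing to the cataproduct quotient interprets (a ring isomorphic to) $\cp A$ inside $\cp A$'s underlying structure — at any rate, $\cp A$ is recovered from $\set A$ by a uniform definable procedure (quotient by infinitesimals, then apply $\rho,\mu$). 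A morphism $f\colon A\to B$ in $\ul{\cat{Norm}_e}$ is represented by a sequence $f_x\colon A_x\to B_x$, inducing a ring map $\ul f\colon\ul A\to\ul B$ which sends infinitesimals to infinitesimals and hence descends to $\cp f\colon\cp A\to\cp B$; this is our $i(f)$. Injectivity on objects follows because distinct objects $A\neq B$ of $\ul{\cat{Norm}_e}$ have disjoint hom-set representatives, so $\set A$ and $\set B$ are disjoint and therefore so are their quotients $\cp A$ and $\cp B$ (different hom-sets being disjoint, by the standing convention). Finally, chaining with \Prop{P:KSirr}/\Rem{R:ulproof} as in the proof of \Thm{T:univcat}: any small $\mathbb D$ elementarily equivalent to $\cat{Norm}_e$ embeds elementarily into some ultrapower $\ul{\cat{Norm}_e}$, which maps via $i$ into $\cat{Norm}_e$ by an object-injective functor, giving the desired object-injective $\mathbb D\to\cat{Norm}_e$ and showing $\cat{Norm}_e$ is weakly universal.

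The main obstacle I anticipate is not the categorical bookkeeping (which is essentially a transcription of \Examp{E:nonfo}) but the ring-theoretic verification that the cataproduct $\cp A$ genuinely lands in $\cat{Norm}_e$ — that \emph{normality} and \emph{positive dimension} survive the cataproduct under a bound on embedding dimension — together with checking that $\cp f$ is well-defined (sends infinitesimals to infinitesimals, which is clear) and that the interpretation of \Prop{P:rngnormal} is uniform enough to pass through both the ultraproduct and the cataproduct quotient. If \Prop{P:rngnormal} is only asserted for honest normal Noetherian local domains, one should note that $\cp A$ \emph{is} such a ring, so the interpretation applies to it directly, and one only needs the \emph{uniformity} of $\rho,\mu$ over the family $\{A_x\}$ — which holds because these formulas depend only on the language of rings, not on the ring.
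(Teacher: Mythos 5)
Your proposal reproduces the paper's argument almost step for step: the same locally unique family of generators $\pol kt_{(t)}$ with $k$ a prime field, the identification of $\set A$ with the ultraproduct of the maximal ideals via \Lem{L:isoulfam}, the uniform interpretation of the rings inside their maximal ideals via the fixed formulas $\rho,\mu$ of \Prop{P:rngnormal} (so that \qfo\ recovers the ultraproduct ring $(Z,+,*)\iso\ul A$), the passage to the cataproduct $\cp A$ using the bound $e$ on embedding dimension and \cite[Thm 8.1.4]{SchUlBook}, injectivity on objects from disjointness of hom-sets, and the final appeal to \Prop{P:KSirr}/\Rem{R:ulproof} as in \Thm{T:univcat}. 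You also correctly identify the point of \Prop{P:rngnormal} and correctly settle for injectivity on objects only (the paper's $t\mapsto t^n$ example shows faithfulness genuinely fails, which is why only \emph{weak} universality is claimed).

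The one genuine gap is exactly the point you flag but then wave through: you assert ``the fact that a bounded-embedding-dimension cataproduct of normal domains is a normal domain,'' but this is precisely the statement the paper does \emph{not} prove --- the author remarks in a footnote that it is ``very likely'' but that no proof is at hand. The paper's workaround is cheaper than proving normality: it defines the functor as $A\mapsto\cp{\bar A}$, where $\cp{\bar A}$ is the integral closure of the cataproduct. Since $\cp A$ is a \emph{complete} Noetherian local domain, its integral closure is again a Noetherian local domain and is normal by construction, so it lands in $\cat{Norm}_e$, and post-composing with normalization disturbs neither functoriality nor injectivity on objects. You should adopt this device (or supply an actual proof of preservation of normality under cataproducts); as written, your argument rests on an unproven claim. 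The same caution applies, more mildly, to positive dimension, which you also assert ``should be'' preserved without argument.
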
 
\begin{proof}
Let $\Theta$ be the collection of all $\pol kt_{(t)}$, where $t$ is a single variable and $k$ runs over all prime fields, that is to say, $k$ is either $\mathbb Q$ or a $p$-element field $\mathbb F_p$. It is not hard to see that these form a family of generators of $\cat{Norm}_e$ and that the family is locally unique. For $R$ in $\cat{Norm}_e$, we set
\[
\set R:= \hom{\cat{Norm}_e}{\Theta}R.
\]
Since in $\cat{Norm}_e$, the morphisms are assumed to be local, $\set R$ is isomorphic to the maximal ideal of $R$. Take an ultrapower $\ul{(\cat{Norm}_e)}$ with respect to an ultrafilter on a set $X\in \catset$ and let $\ul{\Theta}$ be the image of the family $\Theta$ under the diagonal embedding, so that they form a locally unique family of generators of the ultrapower $\ul{(\cat{Norm}_e)}$.  For an object $A$ in the ultrapower given by a sequence  of Noetherian local normal domains $A_x$ of embedding dimension at most $e$, we therefore set again $\set A:=\hom{\ul{(\cat{Norm}_e)}}{\ul{\Theta}}A$,  and then by \Lem{L:isoulfam},  this set  $\set A$  is just the ultraproduct of the sets $\set{A_x}$. Using the notation from \Prop{P:rngnormal}, the subset $Z\sub \set A$ defined by $\rho$ and the function $*$ on $Z$ defined by $\mu$ are the respective ultraproducts of the subsets $Z_x$ and the binary functions $*_x$ for each $x$. Since $(Z_x,+,*_x)\iso A_x$, it follows that $(Z,+,*)$ is isomorphic to the ultraproduct of the $A_x$, whence is in particular a local ring with maximal ideal $\maxim$ generated by at most $e$ elements. Let $\cp A$ be the cataproduct of the $A_x$, which therefore is isomorphic to $Z/\mathfrak M$ where $\mathfrak M$ is the intersection of all powers $\maxim^n$. By \cite[Them 8.1.4]{SchUlBook}, the cataproduct $\cp A$ is a complete Noetherian local domain of embedding dimension at most $e$. Let $\cp{\bar A}$ be the integral closure of $\cp A$,\footnote{It is very likely that $\cp A$ is already normal, but since I do not have immediately a proof, I just add this step.} so that it belongs to $\cat{Norm}_e$. We have defined an assignment  $\ul{(\cat{Norm}_e)}\to \cat{Norm}_e\colon A\mapsto \cp {\bar A}$ and it is now easy to check that this map is functorial.  The usual argument using \Prop{P:KSirr} then yields weak universality. 
\end{proof}

Unfortunately, while  injective on objects, the functor  $A\mapsto \cp {\bar A}$ is no longer so on morphisms. For instance the endomorphism on $\ul{(\pol kt_{(t)})}$ given by the sequence of morphisms  $t\mapsto t^n$ (with index set $X=\nat$) is different in $\ul{(\pol kt_{(t)})}$ from the endomorphism given by $t\mapsto 0$, but in the cataproduct, they are the same.

%

\begin{corollary}\label{C:univnonfo}
For $d,e$ positive integers, the following categories of small $d$-dimensional equal \ch\ normal local rings of multiplicity $e$ are weakly universal:    \CM,  Gorenstein, complete intersection. In particular, the category of equal \ch\ $d$-dimensional regular local rings is weakly universal. 
\end{corollary}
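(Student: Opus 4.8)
The plan is to deduce \Cor{C:univnonfo} from \Thm{T:univnormal} by the same mechanism used in that proof: each of the listed classes is closed under the relevant cataproduct operation, so the functor $A\mapsto\cp{\bar A}$ (or a variant thereof) lands in the desired subcategory, and then the argument via \Prop{P:KSirr} yields weak universality. First I would fix a family of generators. Since multiplicity $e$ and dimension $d$ are being prescribed, and multiplicity bounds the embedding dimension for, say, \CM\ local rings (indeed $\op{embdim}\le d-1+e$ for a \CM\ local ring of multiplicity $e$ and dimension $d$), each of these categories is a full subcategory of $\cat{Norm}_{e'}$ for a suitable $e'=e'(d,e)$, so the locally unique family $\Theta=\{\pol kt_{(t)}\}$ of \Thm{T:univnormal} restricts to a locally unique family of generators here as well, with $\set R\iso\maxim_R$ exactly as before.

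Next I would run the cataproduct construction. Given a small category $\cat C$ elementarily equivalent to one of these categories, \Rem{R:ulproof} gives an ultrafilter in $\catset$ and an elementary embedding $\cat C\into\ul{\cat C}$; an object $A$ of the ultrapower comes from a sequence $A_x$ of rings in the class, and by \Lem{L:isoulfam} together with \Prop{P:rngnormal}, the set $\set A$ carries, via the formulas $\rho$ and $\mu$, a ring structure isomorphic to the ultraproduct $\ul A$, whose cataproduct $\cp A$ is, by \cite[Thm 8.1.4]{SchUlBook}, a complete Noetherian local ring of embedding dimension at most $e'$ (equivalently bounded multiplicity), of the same dimension $d$. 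The point then is that the defining property of each class transfers: \CM{}ness, Gorensteinness, and being a complete intersection are all preserved under ultraproducts of rings with bounded embedding dimension and, crucially, under passage to the cataproduct (these are exactly the permanence results of \cite{SchUlBook}, since e.g. the depth, the type, and the number of relations needed can all be controlled uniformly). Regular is the intersection of ``normal'' and ``complete intersection'' at the extreme, or directly: a regular local ring of dimension $d$ has multiplicity $1$, so the cataproduct of regular local rings of dimension $d$ is again regular of dimension $d$. One still passes to the integral closure $\cp{\bar A}$ to land in $\cat{Norm}_{e'}$, and one should check that for \CM/Gorenstein/complete intersection the class is stable under normalization in the situations that arise — or, better, invoke that $\cp A$ is already normal in these cases (for regular rings it certainly is, being regular), so no normalization step is needed and $\cp A$ itself lies in the target category. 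Then $A\mapsto\cp A$ (resp. $\cp{\bar A}$) is functorial and injective on objects exactly as in \Thm{T:univnormal}, and composing with the elementary embedding $\cat C\into\ul{\cat C}$ gives the required functor injective on objects, i.e., weak universality.

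The main obstacle is the closure of each class under the cataproduct operation (and the harmless-looking normalization step). Ultraproducts of \CM\ rings of bounded embedding dimension being \CM, and the corresponding statement for the cataproduct, is the technical heart; the analogous statements for Gorenstein and complete intersection require tracking the canonical module, the type, or a uniform bound on the number of defining relations under the ultraproduct/cataproduct. All of this is available in \cite{SchUlBook}, so in the write-up I would cite the relevant theorems there rather than reprove them, and state explicitly which permanence property is invoked for each of the three classes, noting that the regular case then follows since multiplicity $1$ forces each $A_x$, hence $\cp A$, to be regular.
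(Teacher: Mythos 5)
Your proposal follows essentially the same route as the paper: each listed class sits inside $\cat{Norm}_{d+e-1}$ via the bound on embedding dimension in terms of dimension and multiplicity, and the only remaining point is that the defining properties persist under cataproducts, for which the paper cites the permanence results of \cite{SchFinEmb} (Corollary 8.2 for regularity, Theorem 8.8 for the \CM{} condition, Theorem 8.12 for Gorenstein and complete intersection) rather than \cite{SchUlBook}. The paper also dispatches the regular case simply as the instance $e=1$ of the Gorenstein/complete intersection statement, matching your multiplicity-one observation.
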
 
\begin{proof}
The last assertion  is just the case $e=1$. All given categories are subcategories of $\cat{Norm}_{d+e-1}$  and the only thing to observe now is that the stated properties are preserved under cataproducts. For regularity, this follows from \cite[Corollary 8.2.]{SchFinEmb},    for the \CM\ condition from \cite[Theorem 8.8.]{SchFinEmb}, and the remaining two are dealt with by  \cite[Theorem 8.12.]{SchFinEmb}.
\end{proof} 
\begin{remark}\label{R:univnonfo}
Since cataproducts are always complete, universality also holds for the subcategories of the above categories consisting only of the complete local rings in that category.
\end{remark}

\section{Appendix: equivalence of categories versus elementary equivalence.}\label{s:app}

\newcommand\val[3]{\sym{QC}(#1,#2;#3)}
\newcommand\vali[4]{\sym{QC}_{\cat #4}(#1,#2,#3)}
\newcommand\isoi{\iso_{\cat i}}
\newcommand\langhomot{L_{\text{homo}}}
\newcommand\homot{homotopic}
\newcommand\ito{\Rightarrow}

Let $\cat C$ be a category. By an \emph{iso-graph} of $\cat C$, we mean a thin,  wide\footnote{\emph{Wide} means that they have the same objects; \emph{thin} means  that the    hom-sets have at most one element.} subcategory $\cat i$  such that  $\hom{\cat i}AB\neq\emptyset$ \iff\ $A\iso B$ (in $\cat C$). In particular, if $\cat C$ is skeletal, then $\cat i$ is discrete (whence unique), but for non-skeletal categories, there may be many iso-graphs (assuming the axiom of choice, iso-graphs always exist.)

Note that if $\alpha\in \hom{\cat i}AB$, then it is actually an isomorphism: since necessarily $A\iso B$, there is $\beta\in \hom{\cat i}BA$ and hence $\beta\alpha\in \hom{\cat i}AA=\{1_A\}$, so that in fact, $\beta=\inv\alpha$. Therefore, $\cat i$ is an iso-graph, if it is a subcategory with the same objects in which all morphisms are isomorphisms and the identities are the only automorphisms (this forces the category to be thin as well). Let us indicate an arrow belonging to $\cat i$ by $A\ito B$, whenever the iso-graph is clear. Given morphisms  $f\colon A\to B$, $g\colon C\to D$ and $h\colon P\to Q$, let  $\vali fghi$ express the existence of the following commutative diagram (of \emph{quasi-composition})
\begin{equation}\label{eq:Ival}
\xymatrix{
P\ar@{=>}[d]\ar[rrr]^h&&&Q\ar@{=>}[d]\\
A\ar[r]_f&B\ar@{=>}[r]&C\ar[r]_g&D.
}
\end{equation} 
Of course, implicit in this is that $A\iso P$, $B\iso C$ and $D\iso Q$. 

\begin{lemma}\label{L:comp}
Given morphisms $f\colon A\to B$, $g\colon B\to D$ and $h\colon A\to D$, then $\vali fghi$ \iff\ $h=g\after f$.
\end{lemma}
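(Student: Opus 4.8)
The claim is that $\vali fghi$ holds if and only if $h = g \after f$, where now $f\colon A\to B$, $g\colon B\to D$ and $h\colon A\to D$ all sit in the category $\cat C$ itself (so $A\iso P$, $B\iso C$, $D\iso Q$ in diagram~\eqref{eq:Ival} are forced, but since we are told the domains and codomains coincide, we are in the ``degenerate'' case where the iso-graph arrows connect each object to itself). The natural approach is to feed the only available structure — the fact that an iso-graph has the identities as its \emph{only} automorphisms — into the commutative square.

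First I would unpack $\vali fghi$: it asserts the existence of arrows $\alpha\colon P\ito A$, $\beta\colon B\ito C$, $\gamma\colon D\ito Q$ in $\cat i$ making \eqref{eq:Ival} commute, i.e. $h = \inv\gamma \after g \after \beta \after f \after \alpha$ (reading the square: going around gives $\gamma \after h = g \after \beta \after f \after \alpha$). In the present situation $P = A$, $Q = D$, and $B = C$; hence $\alpha\colon A\ito A$, so $\alpha \in \hom{\cat i}AA = \{1_A\}$, giving $\alpha = 1_A$; likewise $\gamma = 1_D$ and $\beta = 1_B$ — each is the identity because an iso-graph has no non-identity automorphisms (as observed in the paragraph preceding the lemma, every $\cat i$-arrow is invertible, and an invertible $\cat i$-endomorphism lies in the singleton hom-set of identities). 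Substituting, $h = 1_D \after g \after 1_B \after f \after 1_A = g \after f$. Conversely, if $h = g \after f$, then taking $\alpha = 1_A$, $\beta = 1_B$, $\gamma = 1_D$ (all legitimate $\cat i$-arrows since $\cat i$ is wide) makes \eqref{eq:Ival} commute, so $\vali fghi$ holds.

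The steps, in order: (1) translate $\vali fghi$ into the existence of three $\cat i$-arrows filling the square and the resulting equation; (2) observe that because $\op{dom}$ and $\op{rng}$ of $f$, $g$, $h$ match up in the way prescribed, all three of these $\cat i$-arrows are endomorphisms, hence identities, by the ``only automorphisms are identities'' property of an iso-graph; (3) read off $h = g\after f$; (4) for the converse, exhibit the identities as the required witnesses. The only mildly delicate point — and the one I would write out carefully — is step (2): one must be sure the thinness/rigidity of $\cat i$ genuinely forces each of $\alpha,\beta,\gamma$ to be the identity and not merely \emph{an} isomorphism; this is exactly the content of the remark that in an iso-graph the identities are the sole automorphisms, so there is no real obstacle, just a need to cite that fact precisely. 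Everything else is bookkeeping with composition.
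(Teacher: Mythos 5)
Your proof is correct and matches the paper's argument exactly: the paper likewise observes that since the domains and codomains force all the $\ito$-arrows in diagram~\eqref{eq:Ival} to be endomorphisms, they must be identities (the sole automorphisms in an iso-graph), yielding $h=g\after f$, with the converse immediate. Your write-up is simply a more explicit version of the paper's two-line proof, and the point you flag as delicate --- that rigidity forces $\alpha,\beta,\gamma$ to be identities rather than arbitrary isomorphisms --- is precisely the fact the paper invokes.
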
 
\begin{proof}
One direction is immediate, so assume $\vali fghi$ holds. But then in  diagram~\eqref{eq:Ival}, all $\ito$-arrows must be identities, proving that $h=gf$.
\end{proof} 

\begin{lemma}\label{L:Iff}
Let $\cat C$ and $\cat D$ be categories with respective iso-graphs $\cat i$ and $\cat j$. If $F\colon \cat C\to \cat D$ is a fully faithful functor preserving these iso-graphs (i.e., inducing a functor $\cat i\to\cat j$), then $\vali fghi$ holds in $\cat C$, for  some $\cat C$-morphisms $f,g,h$,   \iff\ $\vali{F(f)}{F(g)}{F(h)}j$ holds in $\cat D$.
\end{lemma}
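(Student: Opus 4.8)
\textbf{Proof plan for Lemma~\ref{L:Iff}.}
The plan is to unwind both sides of the claimed equivalence directly in terms of the diagram~\eqref{eq:Ival} and to use that $F$, being fully faithful and iso-graph-preserving, transports such diagrams in both directions. First I would fix $\cat C$-morphisms $f\colon A\to B$, $g\colon C\to D$, $h\colon P\to Q$ and suppose $\vali fghi$ holds; this gives a commutative square as in~\eqref{eq:Ival} with vertical arrows $\alpha\colon P\ito A$, $\beta\colon Q\ito D$ in $\cat i$ and an arrow $\gamma\colon B\ito C$ in $\cat i$, satisfying $g\after\gamma\after f\after\alpha = \beta\after h$ (with all the $\ito$-arrows isomorphisms, by the remark preceding Lemma~\ref{L:comp}). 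Applying $F$ and using functoriality, $F(g)\after F(\gamma)\after F(f)\after F(\alpha) = F(\beta)\after F(h)$; since $F$ preserves the iso-graphs, $F(\alpha), F(\beta), F(\gamma)$ all lie in $\cat j$, and they are still isomorphisms. Hence the image square witnesses $\vali{F(f)}{F(g)}{F(h)}j$ in $\cat D$. This direction uses only that $F$ is a functor and iso-graph-preserving; faithfulness and fullness are not needed here.

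For the converse, suppose $\vali{F(f)}{F(g)}{F(h)}j$ holds in $\cat D$, so we have $\cat j$-arrows $F(P)\ito F(A)$, $F(B)\ito F(C)$, $F(Q)\ito F(D)$ fitting into a commutative square over $F(f), F(g), F(h)$. Here is where full faithfulness enters: I would like to lift each of these three $\cat j$-arrows to an $\cat i$-arrow in $\cat C$, but $F$ need only induce \emph{some} functor $\cat i\to\cat j$, not an isomorphism of iso-graphs, so a direct lift is not automatic. The point to exploit is that in $\cat C$ we still have $A\iso P$, $B\iso C$, $D\iso Q$: indeed $F(P)\iso F(A)$ in $\cat D$ forces $P\iso A$ in $\cat C$ because $F$ is fully faithful (a fully faithful functor reflects isomorphisms), and similarly for the other two pairs; by definition of an iso-graph there are then arrows $\alpha\colon P\ito A$, $\gamma\colon B\ito C$, $\beta\colon Q\ito D$ in $\cat i$. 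Applying $F$, the arrows $F(\alpha), F(\beta), F(\gamma)$ lie in $\cat j$, and since $\cat j$ is thin they must coincide with the three $\cat j$-arrows given by the hypothesis (any two parallel $\cat j$-arrows are equal). Therefore $F(g)\after F(\gamma)\after F(f)\after F(\alpha) = F(\beta)\after F(h)$, i.e.\ $F\bigl(g\after\gamma\after f\after\alpha\bigr) = F\bigl(\beta\after h\bigr)$, and faithfulness of $F$ yields $g\after\gamma\after f\after\alpha = \beta\after h$ in $\cat C$. That is exactly the commutativity of the square~\eqref{eq:Ival} for $f,g,h$, so $\vali fghi$ holds.

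The main obstacle is precisely the lifting issue in the converse: one cannot pull back the witnessing $\cat j$-arrows along $F$ as arrows of $\cat i$ directly, and the fix is the two-step observation that (i) $F$ reflects the existence of isomorphisms (hence of $\cat i$-arrows, by the defining property of an iso-graph), and (ii) thinness of $\cat j$ forces the $F$-images of these lifted $\cat i$-arrows to agree with the originally given ones, so that the lifted square maps onto the given square. Once that is in place, faithfulness converts the equation of $\cat D$-morphisms back into an equation of $\cat C$-morphisms, and thinness of $\cat i$ guarantees there is nothing further to check (the choices of $\alpha,\beta,\gamma$ are unique). I would close by remarking that, combined with Lemma~\ref{L:comp}, this shows a fully faithful iso-graph-preserving functor both preserves and reflects quasi-composition, which is the compatibility needed for the equality-free language discussed in the Appendix.
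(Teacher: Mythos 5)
Your proof is correct and takes essentially the same route as the paper's: the forward direction uses only functoriality and iso-graph preservation, while the converse lifts the witnessing $\cat j$-arrows by showing that full faithfulness reflects the relation of being isomorphic (hence $\cat i$-arrows exist between the relevant objects), uses thinness of $\cat j$ to identify their $F$-images with the given arrows, and concludes by faithfulness. The paper packages the lifting step as the claim $A\ito B$ \iff\ $F(A)\ito F(B)$, proved by exactly your argument.
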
 
\begin{proof}
One direction  holds for any functor preserving the iso-graphs. To prove the converse, let us first show that $A\ito B$ \iff\ $F(A)\ito F(B)$. One direction is by assumption, so assume   $\alpha\colon F(A)\ito F(B)$. Since then also $\inv\alpha\colon F(B)\ito F(A)$ and $F$ is full, there are $a\colon A\to B$ and $b\colon B\to A$ such that $F(a)=\alpha$ and $F(b)=\inv\alpha$.  Hence $F(ba)=1_{F(A)}=F(1_A)$ and faithfulness then implies $ba=1_A$. In particular, $A\iso B$ and hence there is $\sigma\colon A\ito B$, and by assumption $F(\sigma)=\alpha$.

 Therefore, if $\vali{F(f)}{F(g)}{F(h)}j$ holds, i.e, if we have a commutative diagram
\[
\xymatrix{
F(P)\ar@{=>}[d]\ar[rrr]^{F(h)}&&&F(Q)\ar@{=>}[d]\\
F(A)\ar[r]_{F(f)}&F(B)\ar@{=>}[r]&F(C)\ar[r]_{F(g)}&F(D).
}
\]
in $\cat D$, then this is the image under $F$ of  diagram \eqref{eq:Ival} and since $F$ is faithful, the latter diagram also commutes, i.e., $\vali fghi$ holds.
\end{proof} 

\begin{lemma}\label{L:subisogr}
Let $\cat d$ be a finite subcategory of $\cat C$. There is an $\langcat$-formula $\sym{subisogr}(\cat d)$ expressing that $\cat d$ can be extended to an iso-graph of $\cat C$.
\end{lemma}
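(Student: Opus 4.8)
The plan is to reduce the statement to a purely combinatorial criterion on the finite datum $\cat d$, and then check that this criterion is first-order. Write $\cat d$ as a finite category with objects $A_1,\dots,A_n\colon A_j\to B_j$ wait---write it with objects $A_1,\dots,A_n$ and morphisms $f_1,\dots,f_m$; these become the parameters of $\sym{subisogr}(\cat d)$, and the logical shape of the formula will depend on the (finite) composition table of $\cat d$. I claim that $\cat d$ extends to an iso-graph of $\cat C$ \iff\ (a) each $f_j$ is an isomorphism in $\cat C$, and (b) every \emph{closed zigzag} in $\cat d$---a sequence of objects $A=C_0,\dots,C_k=A$ of $\cat d$ linked by morphisms $g_i$ of $\cat d$ pointing either way, read off with $\inv{g_i}$ where the arrow points backwards (legitimate by (a))---composes to the identity $1_A$ in $\cat C$. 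Note that (b) already forces $\cat d$ to be thin, by applying it to $A\xrightarrow{f}B\xrightarrow{\inv g}A$ for parallel $f,g$ in $\cat d$.

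For necessity I would argue exactly as in the paragraph preceding the lemma: an iso-graph $\cat i\supseteq\cat d$ is automatically a groupoid all of whose hom-sets have at most one element, so every morphism of $\cat d$ is an isomorphism, and any composite of $\cat i$-arrows from $A$ to $A$ lies in $\hom{\cat i}AA=\{1_A\}$---which is exactly (b). For sufficiency I would build an iso-graph $\cat i\supseteq\cat d$ one isomorphism class $K$ at a time, invoking the axiom of choice (as is anyway needed for iso-graphs to exist). Inside $K$, the objects of $\cat d$ meeting $K$ break into zigzag-components $Q_1,\dots,Q_r$; by (b) the $\cat d$-morphisms in a fixed $Q_l$ determine a well-defined isomorphism between any two of its objects (two zigzags differ by a closed one, which by (b) composes to an identity), and the value on a $\cat d$-morphism $f$ is $f$ itself. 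Fixing a base object of $K$, choosing arbitrary isomorphisms from it into one object of each $Q_l$ and into the remaining objects of $K$, and propagating along zigzags, yields a transitive system of isomorphisms on $K$ extending the $\cat d$-data; the union over all $K$ is a wide, thin subcategory realising exactly one isomorphism per ordered pair of isomorphic objects, i.e.\ an iso-graph containing $\cat d$.

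Finally I would translate (a)$\en$(b) into $\langcat$. Condition (a) is captured by asking, for each $j$, that $f_j$ have a two-sided inverse. For (b), the underlying graph of $\cat d$ is finite, hence has a finitely generated fundamental groupoid: fixing a spanning forest, every closed zigzag composes to an identity \iff\ each of the finitely many fundamental zigzags (forest-path, one non-forest edge, forest-path back) does, since ``composes to an identity'' is closed under concatenation and reversal of zigzags. Each fundamental zigzag gives a single equation between $\after$-composites of the $f_j$ and their inverses; all the inverses can be produced by one outer existential block $(\exists y_1)\cdots(\exists y_m)$ asserting $y_j=\inv{f_j}$ (i.e.\ $y_j\after f_j=1_{A_j}\en f_j\after y_j=1_{B_j}$ for the domain/range data of $f_j$) followed by the conjunction of the finitely many loop equations. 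This conjunction is $\sym{subisogr}(\cat d)$; if one wants the formula also to recognise $\cat d$ internally, one may further conjoin the finitely many equations recording its composition table and identities.

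The step I expect to be the real obstacle is pinning down criterion (b): it is tempting to guess that ``$\cat d$ is a thin subcategory consisting of isomorphisms'' already suffices, but a ``square without diagonals'' (four pairwise-isomorphic objects with four chosen isos and no composites) shows that the chosen isomorphisms must be globally coherent, and recognising that the \emph{only} genuine obstruction is the vanishing of closed-zigzag composites---and matching this precisely against the choice-based extension used in the sufficiency direction---is where the work lies. The reduction of ``all closed zigzags'' to ``finitely many'' via a spanning forest is routine, but must be stated in order to see that $\sym{subisogr}(\cat d)$ is a genuine (finite) first-order formula.
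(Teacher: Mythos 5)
Your proof is correct, and it is considerably more complete than the paper's, which is a two-sentence sketch: the paper says only that, besides ``the obvious constraints on the hom-sets,'' one must check that no composition of morphisms of $\cat d$ yields a proper automorphism, and that finitely many words in the $n$ morphisms suffice. Read literally, that check uses words in the morphisms of $\cat d$ only, without their inverses, and your ``square without diagonals'' shows this is not enough: four pairwise-isomorphic objects with isomorphisms $f\colon A\to B$, $g\colon C\to B$, $h\colon C\to D$, $k\colon A\to D$ and no two composable form a thin subcategory all of whose morphisms are isomorphisms and which admits no non-identity compositional word, yet it extends to an iso-graph only if $\inv g\after f=\inv h\after k$ (the unique $\cat i$-arrow $A\Rightarrow C$ would have to equal both). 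Your criterion --- every morphism of $\cat d$ is invertible in $\cat C$ and every closed zigzag, with inverses allowed, composes to the identity --- is the correct necessary and sufficient condition; necessity follows from the paper's observation that an iso-graph is closed under inverses and has only identity automorphisms, and your sufficiency argument (choose, per isomorphism class, coherent isomorphisms from a base object, propagating along zigzag components) supplies the construction the paper omits entirely. The spanning-forest reduction to finitely many loop equations, with one existential block producing the inverses, is the right way to see that $\sym{subisogr}(\cat d)$ is a genuine finite $\langcat$-formula; it replaces the paper's ``$n!$ words'' count and, unlike it, actually covers the zigzag case. In short: same general strategy (reduce to a finite, first-order combinatorial check on the parameters), but your version identifies the correct check and proves both directions, whereas the paper's version as stated proves neither sufficiency nor, strictly speaking, the right necessity condition.
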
 
\begin{proof}
Apart from the obvious constraints on the hom-sets in $\cat d$, we must also check that no composition of morphisms in $\cat d$ can yield a proper automorphism.  If $\cat d$ has $n$ morphisms, then this requires only $n!$ the `words' in these morphisms to be checked.
\end{proof} 

Note that the collection of morphisms of the iso-graph $\cat i $ is  also definable as 
\begin{equation}\label{eq:defi}
 \alpha\ \text{is an $\cat i$-morphism \quad if and only if\quad}(\forall p)  [(\exists q) \vali \alpha pqi\dan \vali \alpha ppi].
\end{equation} 
Indeed, for a given $p$, if there is some $q$ such that $\vali \alpha pqi$ holds, then the range $B$ of $\alpha$ is isomorphic to the domain of $p$. So, if $\alpha$ is moreover an $\cat i$-morphism, then $\vali \alpha ppi$ holds. Conversely, if this holds, then take $p=q=1_B$, so that   the quasi-composition diagram~\eqref{eq:Ival} shows that $\alpha$ belongs to $\cat i$.  This defining formula is an example of the following class of formulas:
%

\begin{definition}\label{D:homot}
Any  expression   built up from the    predicate  $\sym{QC}_{\cat i}$  by taking Boolean combinations and quantification will be called a \emph{\homot} formula. 
\end{definition} 
Note that equality is not allowed in such formulas as functors are in general not injective (even if they are faithful).  Unlike the systems in the literature (like \cite{Blanc,FreydEquiv,MakPar,Shul}) using dependent types, quantification here is unrestricted.

 \begin{theorem}\label{T:equivelem}
%
Any two equivalent categories agree on  \homot\ sentences.
\end{theorem}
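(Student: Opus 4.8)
The plan is to reduce the statement to a transfer of quasi-composition data along an equivalence, using \Lem{L:Iff} as the workhorse. First I would recall that if $\cat C$ and $\cat D$ are equivalent, there is a fully faithful, essentially surjective functor $F\colon \cat C\to\cat D$ (together with a quasi-inverse $G$). Fix iso-graphs $\cat i$ of $\cat C$ and $\cat j$ of $\cat D$; the first thing to check is that, after possibly adjusting $\cat j$, we may assume $F$ preserves iso-graphs in the sense of \Lem{L:Iff}, i.e.\ that $F$ induces a functor $\cat i\to\cat j$. This is where essential surjectivity and fullness are used: given $A\iso B$ in $\cat C$, we have $F(A)\iso F(B)$ automatically, so $F$ maps the $\cat i$-arrow $A\ito B$ to \emph{some} isomorphism $F(A)\to F(B)$, and one checks (as in the proof of \Lem{L:Iff}) that this assignment can be taken to be functorial, hence defines a sub-iso-graph of $\cat D$ which extends to a genuine iso-graph $\cat j$; replacing $\cat j$ by this one costs nothing since a \homot\ sentence does not mention which iso-graph is used beyond the predicate $\sym{QC}$.

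Next I would set up the semantics of \homot\ sentences carefully. A \homot\ formula $\varphi(f_1,\dots,f_n)$ with free morphism-variables is interpreted in a category equipped with an iso-graph, its only atomic sub-formulas being $\vali{f}{g}{h}{i}$; since quantification is unrestricted, a \homot\ sentence quantifies over all morphisms of the category. The key lemma to prove, by induction on the structure of $\varphi$, is: for every \homot\ formula $\varphi(x_1,\dots,x_n)$ and all $\cat C$-morphisms $f_1,\dots,f_n$,
\begin{equation*}
\cat C\models\varphi(f_1,\dots,f_n)\quad\Longleftrightarrow\quad \cat D\models\varphi(F(f_1),\dots,F(f_n)).
\end{equation*}
The atomic case is exactly \Lem{L:Iff}. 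Boolean combinations are immediate. For the existential quantifier $\varphi = (\exists x)\psi(x,\bar y)$: if $\cat C\models(\exists x)\psi$, pick a witness $g$ and apply the induction hypothesis to get $\cat D\models\psi(F(g),F(\bar y))$, hence $\cat D\models(\exists x)\psi$. Conversely, if $\cat D\models\psi(g',F(\bar y))$ for some $\cat D$-morphism $g'$, use fullness to find a $\cat C$-morphism $g$ with $F(g)=g'$ — here fullness is essential, and this is the only subtle point — then the induction hypothesis gives $\cat C\models\psi(g,\bar y)$. (One should be slightly careful that $g$ is a morphism of $\cat C$ at all: fullness only produces $g$ once we know the relevant hom-set $\hom{\cat C}{A}{B}$ with $F(A),F(B)$ matching the domain/range of $g'$ is the right one; but every object of $\cat D$ is isomorphic to some $F(A)$, and the predicate $\sym{QC}$ is insensitive to replacing objects by isomorphic ones, so one may assume $g'$ has domain and range in the image of $F$.) Taking $n=0$ in the lemma gives the theorem.

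The main obstacle I expect is precisely the backward direction of the quantifier step: extracting a $\cat C$-morphism from an arbitrary $\cat D$-morphism and ensuring the induction hypothesis applies verbatim. This requires knowing that \homot\ formulas are invariant under replacing any morphism by an $\cat i$-equivalent one (same domain and range up to $\ito$, same up to pre- and post-composition with iso-graph arrows) — a stability property one should record as a preliminary lemma, proved again by induction on $\varphi$ and ultimately reducing to the fact that $\vali{f}{g}{h}{i}$ is, by design in diagram~\eqref{eq:Ival}, already defined "up to isomorphism" in all three arguments. Once that invariance is in hand, fullness of $F$ lets us replace $g'$ by a morphism with domain and range literally in the image of objects, and then fullness produces the desired lift $g$. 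Everything else is bookkeeping, and the essential surjectivity of $F$ enters only through this invariance-plus-lifting mechanism and through the initial normalization of the iso-graph $\cat j$.
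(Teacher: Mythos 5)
Your inductive skeleton---atomic case via \Lem{L:Iff}, Boolean combinations, and the two directions of the existential step, with fullness lifting a witness after essential surjectivity and your invariance lemma move it into the image of $F$---is sound and is essentially the induction the paper runs. The genuine gap is the very first step, the ``normalization'' of the iso-graph: it is \emph{not} true that for an arbitrary iso-graph $\cat i$ of $\cat C$ and an arbitrary equivalence $F\colon \cat C\to\cat D$ one can choose an iso-graph $\cat j$ of $\cat D$ with $F(\cat i)\sub \cat j$. If $F$ identifies two distinct isomorphic objects $A\neq B$ of $\cat C$, the chosen $\cat i$-arrow $A\ito B$ may be sent to a non-identity automorphism of $F(A)=F(B)$, and no iso-graph can contain such a morphism, since identities are the only automorphisms in an iso-graph. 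Concretely, let $\cat C$ be the groupoid with two objects $A,B$, all automorphism groups $\zet/2$ and $\hom{\cat C}AB$ a two-element set, let $\cat D$ be its one-object skeleton and $F$ the collapse: for one of the two admissible choices of the $\cat i$-arrow $A\ito B$ its image under $F$ is the nontrivial automorphism, while $\cat D$ admits only the discrete iso-graph, so no adjustment of $\cat j$ can help. One would have to adjust $\cat i$ instead, which changes the very structure on $\cat C$ whose theory you are computing. This exposes a second, related omission: since the truth of a \homot\ sentence a priori depends on the choice of iso-graph, the theorem implicitly asserts independence of that choice, and your argument never establishes it (applying your transfer lemma with $F=\mathrm{id}$ forces $\cat i=\cat j$, so it gives nothing).

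The paper sidesteps both problems at once by comparing $\cat D$, with an \emph{arbitrary} iso-graph $\cat i$, to its skeleton, whose iso-graph is unique and discrete, via the functor $G\colon\cat D\to\cat C$ defined by $G(f)=\inv\beta f\alpha$ with $\alpha,\beta$ the chosen $\cat i$-arrows; this $G$ kills $\cat i$ by construction, hence preserves iso-graphs automatically, and two equivalent categories are then compared through their isomorphic skeletons. Your stability lemma (invariance of \homot\ formulas under replacing arguments by $\isoi$-equivalent morphisms) is correct and provable by pasting squares of $\ito$-arrows, and the rest of your quantifier induction goes through once the comparison functor is routed through the skeleton; so the repair is local, but as written the proof does not close.
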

\begin{proof}
We will prove the stronger statement that if $\cat C$ is the skeleton of $\cat D$, then any  \homot\ sentence $\sigma$ with parameters from $\cat C$ holds in $\cat C$ \iff\ it holds in $\cat D$. The general result follows since two categories are equivalent \iff\ they have isomorphic skeletons. 

Note that $\cat C$ has a unique iso-graph, namely the underlying discrete category.  Fix some iso-graph $\cat i$ in $\cat D$. I claim there exists an equivalence $G\colon \cat D\to \cat C$ which preserves iso-graphs and is the identity on $\cat C$. Indeed, given an arbitrary object $A$ in $\cat D$,  we let $G(A)$ be the unique object in $\cat C$ that is isomorphic with $A$. Given a morphism $f\colon A\to B$, let $\alpha\colon G(A)\ito A$ and $\beta\colon G(B)\ito B$  be the unique $\cat i$-morphisms and now set $G(f):=\inv\beta f\alpha$.  One  checks that $G$ is a full, faithful and (essentially) surjective functor which sends morphisms in $\cat i$ to identities and whose restriction to $\cat C$ is the identity functor.

  If $\sigma$  has no quantifiers, then the result follows from \Lem{L:Iff} applied to $G$. By induction on the number of quantifiers, we may reduce to the case that $\sigma$ is of the form $(\exists x)\varphi(x)$, where $\varphi(x)$ is a \homot\ formula such that for each $a$ in $\cat C$, both categories agree on the validity of $\varphi(a)$, and we have to show that the same is true for $\sigma$.  The non-trivial direction is that there is $b$ in $\cat D$, such that $\cat D\models \varphi(b)$.   Since the only parameter in $\varphi(b)$ not belonging to $\cat C$ is $b$, \Lem{L:Iff}   yields $\cat C\models \varphi(G(b))$, and hence $\sigma$ also holds in $\cat C$.
\end{proof}  

\subsection*{Limits}
Our goal is to show that the existence of finite limits (and co-limits) can be expressed by \homot\ sentences. 
To facilitate our discussion, let us introduce the following notations for   morphisms $p\colon A\to B$ and $q\colon C\to D$ in a category $\cat C$ with a fixed iso-graph $\cat i$. Write  $p\isoi q$ to mean the existence of a commutative diagram  
\begin{equation}\label{eq:isoeq}
\xymatrix{
A\ar[r]^p\ar@{=>}[d]&B\ar@{=>}[d]\\
C\ar[r]_q&D.
}
\end{equation}
We indicate this by  writing $p\colon C\leadsto_{\cat i} D$, without mentioning $q$, and call $p$ a \emph{quasi-$\cat i$-morphism} from $C$ to $D$. Clearly, $\isoi$ is an equivalence relation. When $\cat i$ is clear from context, we  omit it,   just writing $p\iso q$, $C\leadsto D$, etc.  Note that $p\iso q$ \iff\ there exists an $\cat i$-morphism $\alpha$ such that $\val \alpha pq$, and hence using  \eqref{eq:defi},  we get   a \homot\ formula defining $\iso$.
%
%
%
 Moreover, by \Lem{L:comp}, we have  
 \begin{equation}\label{eq:isoeq}
p\iso q \asa p=q \quad \text{for all parallel morphisms}\  p,q\colon A\to B. 
\end{equation} 


Fix a finite category $\cat d$ and let $J\colon\cat d\to \cat C$ be a functor (viewed as a parameter encoding the finitely many $J(A)$ and $J(s)$ with $A$ and $s$ running over all respective objects and morphisms of $\cat d$); we refer to this situation by calling $J$ a \emph{finite $\cat d$-diagram} in $\cat C$. Define a \homot\ formula $\sym{qcone}_J(x,y)$ (a \emph{quasi-$J$-cone}) with $x\colon C\ito C'$ being some  morphism in $\cat i$ and the $y$-variables corresponding to  quasi-morphisms $\psi_A\colon C\leadsto J(A)$, for $A$  running over all objects in $\cat d$, subject to the requirement that $\val {\psi_A}{J(s)}{\psi_B}$, where $s\colon A\to B$ runs over all morphisms in $\cat d$. In particular, there are morphisms $\varphi_A\colon C\to J(A)$ such that $\psi_A\iso \varphi_A$, for each $A$. \Lem{L:comp} then yields that  $(C,\varphi_A)$ is an actual $J$-cone. 

Next, we define the formula $\sym{qlim}_J(x)$, where $x\colon L\ito L'$ is again some $\cat i$-morphism, expressing that there exist $\lambda_A$ such that $\sym{qcone}_J(x,\lambda_A)$ holds (i.e., the $\lambda_A$ form a quasi-$J$-cone), with the property that whenever $\sym{qcone}_J(\alpha,\psi_A)$ holds, for some $\alpha\colon C\ito C'$,  there is a quasi-morphism $u \colon C\leadsto L$ such that $\val u{\lambda_A}{\psi_A}$ holds, for all $A$, and, if $v\colon C\leadsto L$ is a second quasi-morphism  such that  $\val v{\lambda_A}{\psi_A}$,   for all $A$, then $u\iso v$.

\begin{proposition}\label{P:lim}
Let $\cat C$ be a category with a fixed iso-graph $\cat i$ and let $J\colon \cat d\to \cat C$ be a finite diagram. Then the \homot\ sentence $(\exists x)\sym{qlim}_J(x)$ holds in $\cat C$ \iff\ $J$ has a limit in $\cat C$. 
\end{proposition}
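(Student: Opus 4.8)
The plan is to reduce both implications to the ordinary universal property of a limit, using \Lem{L:comp} to turn the predicate $\sym{QC}$ (applied to honest morphisms) into honest composition, and using that $\iso$ coincides with equality on parallel morphisms. The device I would set up first is that of a \emph{canonical representative}: since $\cat i$ is thin, every quasi-morphism $p\colon C\leadsto L$ admits a unique actual morphism $\bar p\colon C\to L$ with $p\iso\bar p$, obtained by composing $p$ with the unique $\cat i$-morphisms $C\ito\sym{dom}(p)$ and $\sym{rng}(p)\ito L$. Because $\sym{QC}$ respects $\iso$ in each argument (paste iso-squares onto diagram~\eqref{eq:Ival}), $\val p{\lambda_A}{\psi_A}$ holds \iff\ $\val{\bar p}{\bar\lambda_A}{\bar\psi_A}$ holds, and by \Lem{L:comp} the latter says exactly $\bar\psi_A=\bar\lambda_A\after\bar p$; likewise $u\iso v$ for parallel quasi-morphisms says $\bar u=\bar v$. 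Recall also (it is noted right after the definition of $\sym{qcone}_J$) that if $\sym{qcone}_J(\alpha,\psi_A)$ holds with $\alpha\colon C\ito C'$ then $(C,\bar\psi_A)$ is a genuine $J$-cone, and conversely each genuine $J$-cone $(C,\varphi_A)$ yields $\sym{qcone}_J(1_C,\varphi_A)$ by \Lem{L:comp}.

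Granting this dictionary, suppose first that $\sym{qlim}_J(x)$ holds, witnessed by $x\colon L\ito L'$ and quasi-morphisms $\lambda_A$. Then $(L,\bar\lambda_A)$ is a $J$-cone, and I claim it is a limit. Given any $J$-cone $(C,\psi_A)$, note that $\sym{qcone}_J(1_C,\psi_A)$ holds and feed it into the universal clause of $\sym{qlim}_J(x)$: this yields a quasi-morphism $u\colon C\leadsto L$ with $\val u{\lambda_A}{\psi_A}$ for all $A$, so $\bar u\colon C\to L$ satisfies $\bar\lambda_A\after\bar u=\psi_A$, i.e.\ it is a mediating morphism; and if $w\colon C\to L$ is any mediating morphism, then $w$ (as a quasi-morphism) satisfies $\val w{\lambda_A}{\psi_A}$, so the uniqueness clause of $\sym{qlim}_J$ gives $w\iso u$, hence $w=\bar u$. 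Conversely, if $J$ has a limit $(L,\varphi_A)$, I would take $x:=1_L$ and $\lambda_A:=\varphi_A$: then $\sym{qcone}_J(1_L,\varphi_A)$ holds, and for the universal clause a quasi-cone given by $\sym{qcone}_J(\alpha,\psi_A)$ with $\alpha\colon C\ito C'$ produces the honest cone $(C,\bar\psi_A)$, whence the mediating morphism $m\colon C\to L$ out of the limit, viewed as a quasi-morphism $C\leadsto L$, satisfies $\val m{\lambda_A}{\psi_A}$ and is unique up to $\iso$ because the mediating morphism is unique. Thus $\sym{qlim}_J(1_L)$ holds.

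I expect the main obstacle to lie not in either implication but in pinning down this dictionary precisely: one must check that the anchoring $\cat i$-morphisms ($x$ in $\sym{qlim}_J$ and $\alpha$ in $\sym{qcone}_J$) identify the cone vertex with $\sym{dom}(x)$, resp.\ $\sym{dom}(\alpha)$, rather than with the iso-copies $L'$, resp.\ $C'$, so that $\bar p$ and the cone $(L,\bar\lambda_A)$ are unambiguous, and that $\sym{QC}$ really does respect $\iso$ in all three slots --- both of which come down to thinness of $\cat i$ together with \Lem{L:comp}. After that, the rest is a mechanical unwinding. (Finiteness of $\cat d$ is used only so that $\sym{qcone}_J$ and $\sym{qlim}_J$ are bona fide \homot\ formulas: finitely many $\lambda$-variables and finitely many conjuncts of the form $\val{\psi_A}{J(s)}{\psi_B}$.)
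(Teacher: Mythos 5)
Your proof is correct and follows essentially the same route as the paper's: both reduce $\sym{qcone}_J$ and $\sym{qlim}_J$ to honest cones and mediating morphisms by replacing quasi-morphisms with actual representatives, then invoke \Lem{L:comp} for composition and \eqref{eq:isoeq} to upgrade $u\iso v$ to $u=v$ for parallel morphisms. The only difference is one of emphasis: you spell out the converse direction (witnessed by $x:=1_L$) and the compatibility of $\sym{QC}$ with $\iso$, both of which the paper treats as obvious.
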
 
\begin{proof}
One way is obvious, so assume $\sym{qlim}_J(\alpha)$ holds, where $\alpha\colon L\ito L'$. In particular, by assumption, there is quasi-$J$-cone $(L,\lambda_A)$, that is to  say, $\sym{qcone}_J(\alpha,\lambda_A)$ holds. We already remarked that we may  assume that it is an actual cone. Let $(C,\varphi_A)$ be an arbitrary cone. Viewed as a quasi-$J$-cone, there is a quasi-morphism $u\colon C\leadsto L$ such that $\val u{\lambda_A}{\varphi_A}$ holds for all $A$. Replacing $u$ by an isomorphic copy, we may assume that $u\colon L\to C$ is an actual morphism, and hence  $\varphi_A=\lambda_A\after u$ by \Lem{L:comp}. We need to show that $u$ is unique with this property. So let $v\colon C\to L$ be a second morphism satisfying $\varphi_A=\lambda_A\after v$.  By $\sym{qlim}_J(\alpha)$, we get $u\iso v$, whence $u=v$ by \eqref{eq:isoeq}. 
\end{proof} 

 Obviously, co-limits can likewise be encoded by \homot\ formulas. Combining all this with \Thm{T:equivelem}, we recover the well-known fact: 
 
\begin{corollary}\label{C:equivlim}
Two equivalent categories have the same finite limits and co-limits.\qed
\end{corollary}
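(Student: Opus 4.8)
The plan is to read the corollary off from \Prop{P:lim} and \Thm{T:equivelem}. As in the proof of \Thm{T:equivelem}, it suffices to treat the case in which $\cat C$ is the skeleton of $\cat D$: two equivalent categories have isomorphic skeletons, so once we know that a category and its skeleton admit the same finite limits, the general statement follows by chaining two such comparisons. Recall that the proof of \Thm{T:equivelem} supplies, after fixing an iso-graph $\cat i$ of $\cat D$ (the skeleton $\cat C$ carrying its unique, discrete iso-graph), an equivalence $G\colon\cat D\to\cat C$ which is the identity on $\cat C$ and sends every $\cat i$-morphism to an identity, hence in particular preserves iso-graphs.

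Fix a finite indexing category $\cat d$. For a finite $\cat d$-diagram $J\colon\cat d\to\cat C$ --- all of whose defining objects and morphisms are then parameters from $\cat C$ --- the sentence $(\exists x)\sym{qlim}_J(x)$ is a \homot\ sentence with parameters in $\cat C$, so \Thm{T:equivelem} gives that it holds in $\cat C$ \iff\ it holds in $\cat D$. I would then apply \Prop{P:lim} on both sides: in $\cat C$, equipped with its discrete iso-graph, it says that $J$ has a limit in $\cat C$; in $\cat D$, equipped with $\cat i$, it says that $J$, regarded as a diagram valued in $\cat D$, has a limit in $\cat D$. Quantifying over all finite $\cat d$-diagrams $J$ in $\cat C$, we obtain that $\cat C$ admits all limits of shape $\cat d$ \iff\ every $\cat d$-diagram landing in $\cat C$ has a limit in $\cat D$.

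To finish, I would observe that this last condition is equivalent to $\cat D$ admitting all limits of shape $\cat d$. One direction is trivial; for the other, given an arbitrary $K\colon\cat d\to\cat D$, the composite $G\after K$ factors through $\cat C$ and is naturally isomorphic to $K$ (since $G$, being an equivalence which is the identity on $\cat C$, has the inclusion $\cat C\into\cat D$ as quasi-inverse), and the existence of a limit is invariant under natural isomorphism of diagrams; hence $K$ has a limit in $\cat D$ as soon as $G\after K$ does. Letting $\cat d$ range over all finite categories yields the statement for finite limits, and the dual argument, using the \homot\ encoding of co-limits noted after \Prop{P:lim}, yields it for finite co-limits.

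The point to watch --- and the reason this is not just ``quote a single first-order sentence and invoke \Thm{T:equivelem}'' --- is that ``admitting all finite limits of shape $\cat d$'' is a \emph{scheme} of \homot\ sentences, one for each diagram $J$ entering as a parameter. So the argument must run the equivalence $G$ over these parameters and lean on \Prop{P:lim} to guarantee that $(\exists x)\sym{qlim}_J(x)$ really does express ``$J$ has a limit'' in each of $\cat C$ and $\cat D$; the compatibility of the two iso-graphs with $G$, already arranged in the proof of \Thm{T:equivelem}, is exactly what makes the two applications of \Prop{P:lim} line up, and the only genuinely new ingredient is the routine remark that passing from $K$ to the isomorphic diagram $G\after K$ does not affect whether a limit exists.
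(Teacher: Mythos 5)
Your proof is correct and follows exactly the route the paper intends: the paper leaves \Cor{C:equivlim} as an immediate consequence of combining \Prop{P:lim} with \Thm{T:equivelem}, which is precisely what you do. You merely make explicit two points the paper leaves tacit --- that one needs the parameterized form of \Thm{T:equivelem} established inside its proof (since the diagram $J$ enters $(\exists x)\sym{qlim}_J(x)$ as a parameter), and that an arbitrary diagram in $\cat D$ must first be replaced by the naturally isomorphic diagram landing in the skeleton --- both of which are handled correctly.
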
 

\subsection*{Conclusion} 
While model-theorist normally consider  the equality sign as  a logical symbol, one could opt not to do so. Consider then the \emph{\homot\ language} $\langhomot$ consisting of the  binary predicate $\iso$ and the  ternary predicate $\sym{QC}$ in which atomic formulas (whence general formulas)  are built in the usual way with the exception that in stead of $=$, we must use $\iso$ (in fact, we can leave out the predicate $\iso$ as it is definable from $\sym{QC}$). In other words,  instead of term equations, we only have term isomorphisms; note also that the only terms are variables (or morphisms when allowing parameters).  Hence the $\langhomot$-formulas are now just the \homot\ formulas (after replacing each occurrence of the $\iso$ symbol by its \homot\ definition). 

Any category $\cat C$ can be made into a $\langhomot$-structure by  choosing some iso-graph $\cat i\sub \cat C$, taking as underlying set the morphisms of $\cat C$  and letting $\iso$ and $\sym{QC}$  be defined as above.  \Thm{T:equivelem} therefore states that in this language elementary equivalence is the same as equivalence of categories. For skeletal categories,  there is no difference:

\begin{proposition}\label{P:skel}
Let $\cat C$ be  a  skeletal category viewed as an $\langhomot$-structure. Then it can be expanded to a  structure in the language $\langcat\cup\langhomot$ such that any $\langcat$-formula is equivalent to a $\langhomot$-formula, and conversely.
\end{proposition}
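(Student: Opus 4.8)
The plan is to prove Proposition~\ref{P:skel} by unwinding what it means for $\cat C$ to be skeletal, and then exhibiting the two-way translation explicitly. The crucial observation is that in a skeletal category $\cat C$ the unique iso-graph $\cat i$ is the \emph{discrete} wide subcategory: the only morphisms in $\cat i$ are the identities $1_A$, and $A\iso B$ holds \iff\ $A=B$. Consequently every $\ito$-arrow appearing in a quasi-composition diagram~\eqref{eq:Ival} or in an $\isoi$-square must be an identity. This collapses the semantic content of $\sym{QC}$ and $\iso$ down to honest composition and honest equality.

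First I would make $\cat C$ into a $\langcat\cup\langhomot$-structure in the only reasonable way: the underlying set is the set of morphisms, $\sym{QC}$ and $\iso$ are interpreted via the discrete iso-graph as in the Appendix, and the $\langcat$-symbols $\after$, $\sym{dom}$, $\sym{rng}$, $\sym{Id}$ are interpreted in the usual categorical way (noting that $\cat C$, being skeletal, is in particular a model of $\thcat$, so by \Lem{L:catstr} this is well-defined). The content then splits into two translations. For the direction $\langhomot\leadsto\langcat$: by \Lem{L:comp}, in any category $\vali fghi$ with $f,g,h$ composable as displayed is equivalent to $h=g\after f$; in a \emph{skeletal} category the same holds without the composability hypothesis being automatic — one spells out that $\val fgh$ forces $\sym{dom}(f)=\sym{dom}(h)$, $\sym{rng}(f)=\sym{dom}(g)$, $\sym{rng}(g)=\sym{rng}(h)$ (since the $\ito$-arrows are identities) and then $h=g\after f$. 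Hence the atomic formula $\sym{QC}(f,g;h)$ is equivalent, over this structure, to the $\langcat$-formula $\sym{dom}(f)=\sym{dom}(h)\wedge \sym{rng}(f)=\sym{dom}(g)\wedge \sym{rng}(g)=\sym{rng}(h)\wedge h=g\after f$; similarly $p\isoi q$ becomes the conjunction of the four equations $\sym{dom}(p)=\sym{dom}(q)$, $\sym{rng}(p)=\sym{rng}(q)$ together with $p=q$ (again because the vertical $\ito$-arrows in \eqref{eq:isoeq} are identities), i.e., by \eqref{eq:isoeq} it is just $p=q$ together with matching domain and range. Substituting recursively through Boolean connectives and quantifiers turns any $\langhomot$-formula into an equivalent $\langcat$-formula.

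For the converse direction $\langcat\leadsto\langhomot$, I would express each $\langcat$-atomic formula in terms of $\sym{QC}$ (and $\iso$). Equality of morphisms $f = g$ is $f\isoi g$ when $f,g$ are parallel; in general $f=g$ is captured, using \eqref{eq:defi}, by saying that the "identity-test" $\vali f{p}{p}i$ holds for exactly the same $p$'s as for $g$, or more directly one notes that over a skeletal structure $f\isoi g$ already \emph{is} equality (the iso-graph being discrete forces the matching of domains and ranges), so $f=g$ translates to $f\isoi g$. The symbol $\sym{Id}(X)$ is handled by recognizing identities: $e$ is an identity morphism \iff\ $\sym{QC}(e,e;e)$ holds \iff\ $e$ is in the iso-graph via~\eqref{eq:defi}; and $\sym{dom}$, $\sym{rng}$ can be eliminated by replacing any assertion about $\sym{dom}(f)$ with an assertion quantifying over the identity at that object, e.g. "$\sym{dom}(f)=\sym{dom}(g)$" becomes "$(\exists e)(e$ is an identity $\wedge\ \vali efei\wedge \vali egei)$" — since $\vali efei$ with $e$ an identity forces $e = 1_{\sym{dom}(f)}$. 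Composition $h = g\after f$ is exactly $\sym{QC}(f,g;h)$ when the domains/ranges match (by \Lem{L:comp}), and the match can itself be stated in the eliminated form above. Pushing these substitutions recursively through the formula yields an equivalent $\langhomot$-formula.

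The main obstacle is bookkeeping the elimination of $\sym{dom}$, $\sym{rng}$ and $\sym{Id}$ cleanly: $\langhomot$ has \emph{no} sorts and no function symbols, only the morphism sort, so every reference to an object must be replaced by a reference to its identity morphism, and one has to check that the defining formula~\eqref{eq:defi} for "being an $\cat i$-morphism" really does, in the skeletal case, pin down \emph{identities} rather than merely isomorphisms (it does, since the iso-graph is discrete). Once this dictionary between objects and their identities is set up and shown to respect the $\thcat$-axioms, the recursion through Boolean operations and quantifiers is routine and the equivalence of each translated formula with its original — in the fixed expanded structure — follows by a straightforward structural induction, using \Lem{L:comp} and the discreteness of $\cat i$ at the atomic level.
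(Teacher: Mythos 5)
Your proposal is correct and follows essentially the same route as the paper's proof: exploit the discreteness of the unique iso-graph of a skeletal category, recover composition from $\sym{QC}$ via \Lem{L:comp}, recover equality from $\iso$, and trade objects for their identity morphisms (the paper does this by formally adjoining a new $\sym o$-sort element $\set\alpha$ for each identity $\alpha$, which is only cosmetically different from your elimination of $\sym{dom}$, $\sym{rng}$ and $\sym{Id}$ in favour of quantification over identities). One small correction: the test ``$e$ is an identity \iff\ $\sym{QC}(e,e;e)$'' is false in general, since with a discrete iso-graph $\sym{QC}(e,e;e)$ only asserts that $e$ is an idempotent endomorphism ($e\after e=e$), and a skeletal category may well contain non-identity idempotents; you should rely exclusively on the defining formula \eqref{eq:defi} for membership in the iso-graph, which---as you yourself note---does pin down exactly the identities in the skeletal case.
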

\begin{proof}
Since $\cat C$ is skeletal, there is only one iso-graph, the underlying discrete category. The collection of morphisms of this iso-graph is definable as all $\alpha$ such that $(\forall y)\val \alpha yy$ holds, and  consists exactly of all identities.  
For each  such morphism $\alpha$, introduce a new element of the $\sym o$-sort $\set\alpha$, while taking the elements of the $\langhomot$-structure as the those of the $\sym m$-sort. 

Since isomorphic objects are equal, the domain and range predicates are   defined by 
\[
\sym{dom}(f)=\set\alpha \asa\ \val \alpha ff\qquad\text{and}\qquad\sym{rng}(f)=\set\beta \asa\ \val f\beta f
\]
while by \Lem{L:comp}, composition is defined by $h=g\after f$ \iff\ $\val fgh$. The unary function $\sym{Id}$ is just $\set\alpha\mapsto \alpha$. Finally, equality is encoded by $\iso$ in view of  \eqref{eq:isoeq}. This defines every $\langcat$-formula (with equality) from the ternary predicate $\sym{QC}$, and \eqref{eq:Ival} conversely defines $\sym{QC}$ in $\langcat$. 
\end{proof}  

In \Thm{T:univcat}, the language $\langcat$ only featured to ensure that an elementarily  equivalent structure is again a category, as the rest of the argument is based on ultraproducts. Therefore, we could as well have viewed categories in a different language.  Let us then call a \conc\ category $\cat C$ \emph{homotopically universal} if any small category $\cat D$ which is $\langhomot$-elementarily equivalent to it, embeds in it. The argument in \Thm{T:univcat} then shows that any category of the form $\md T$, for $T$ some theory in some first-order language, is homotopically universal. Moreover, by \Thm{T:equivelem}, any \conc\ category that is equivalent (in the sense of categories) to $\md T$ is then also homotopically universal.

\providecommand{\bysame}{\leavevmode\hbox to3em{\hrulefill}\thinspace}
\providecommand{\MR}{\relax\ifhmode\unskip\space\fi MR }
\providecommand{\MRhref}[2]{%
  \href{http://www.ams.org/mathscinet-getitem?mr=#1}{#2}
}
\providecommand{\href}[2]{#2}

\end{document}